\newcounter{contSect} \numberwithin{contSect}{section}
 \numberwithin{contSub}{subsection}
\newtheorem{theorem}[contSect]{Theorem}
\newtheorem{corollary}[contSect]{Corollary}
\newtheorem{lemma}[contSect]{Lemma}
\newtheorem{conjecture}[contSect]{Conjecture}
\newtheorem{claim}[contSect]{Claim}
\newtheorem{observation}[contSect]{Observation}
\newtheorem{problem}[contSect]{Problem}
\title{An algorithm for estimating the crossing number of dense graphs, and continuous analogs of the crossing and rectilinear crossing numbers}
\author{Oriol Solé-Pi\footnote{Department of Mathematics, Massachusetts Institute of Technology. \textit{Email}: oriolsp@mit.edu}}
\date{}
\begin{document}
\maketitle

\begin{abstract}
We present a deterministic $n^{2+o(1)}$-time algorithm that approximates the crossing number of any graph $G$ of order $n$ up to an additive error of $o(n^4)$. We also provide a randomized polynomial-time algorithm that constructs a drawing of $G$ with $\text{cr}(G)+o(n^4)$ crossings. These results yield a $1+o(1)$ approximation algorithm for the crossing number of dense graphs. Our work complements a paper of Fox, Pach and Súk~\cite{approximatingrect}, who obtained similar results for the rectilinear crossing number.

The results in~\cite{approximatingrect} and in this paper imply that the (normalized) crossing and rectilinear crossing numbers are estimable parameters. Motivated by this, we introduce two graphon parameters, the \textit{crossing density} and the \textit{rectilinear crossing density}, and we prove that, in a precise sense, these are the correct continuous analogs of the crossing and rectilinear crossing numbers of graphs.\end{abstract}  

\section{Introduction}\label{sec:intro}

We work with finite, simple and undirected graphs.

Let $G=(V,E)$ be a graph. A \textit{drawing} of $G$ is a representation in which the vertices are mapped to distinct points on the plane and the edges are represented by simple continuous curves connecting their respective endpoints. We further assume that no edge goes through a vertex other than its endpoints, no two edges are tangent at any point, and no three of them have an interior point in common. A \textit{crossing} is a common interior point of two edges in a drawing. The \textit{crossing number} of $G$, denoted by $\mathop{\mathrm{cr}}(G)$, is the minimum number of crossing points between edges when the minimum is taken over all drawings of $G$. Note that any drawing of $G$ with $\mathop{\mathrm{cr}}(G)$ crossings has the additional property that no two edges cross more than once and no two adjacent edges cross. A \textit{straight-line drawing} of $G$ is a drawing such that each edge is represented by a segment joining the corresponding endpoints. The \textit{rectilinear crossing number} of $G$, $\overline{\mathop{\mathrm{cr}}}(G)$, is the least number of crossings amongst all straight-line drawings of $G$. Clearly, $\mathop{\mathrm{cr}}(G)\leq\overline{\mathop{\mathrm{cr}}}(G)$, and it is known that there are graphs for which the inequality is strict (a rather surprising example was obtained by Bienstock and Dean~\cite{cr4largecr}, who constructed graphs with crossing number $4$ but arbitrarily large rectilinear crossing number).

The crossing number and the rectilinear crossing number have been studied extensively, and we refer the reader to the surveys of Schaefer~\cite{schaefersurvey} and Pach and Tóth~\cite{thirteenproblems} for a review of the existing literature and several interesting questions. One of the central open problems in the area is the determination of the asymptotic behaviour of $\overline{\mathop{\mathrm{cr}}}(K_n)$; while it is well known that the limit \[\lim_{n\rightarrow\infty}\frac{\overline{\mathop{\mathrm{cr}}}(K_n)}{\binom{n}{4}}\] exists (cf.~\cite{sylvester}), finding it has proven to be very challenging. Currently, the best known bounds place this quantity between $0.379972$ and $0.380473$; these are due to Ábrego et al.~\cite{recKnlower} and Fabila-Monroy and López~\cite{recKnupper}, respectively. For the crossing number, it is conjectured that \[\mathop{\mathrm{cr}}(K_n)=\frac{1}{4}\lfloor\frac{n}{2}\rfloor\lfloor\frac{n-1}{2}\rfloor\lfloor\frac{n-2}{2}\rfloor\lfloor\frac{n-3}{2}\rfloor,\] and drawings with this number of crossings have been known for several years (c.f. Moon~\cite{mooncrKn}, Guy~\cite{guycrKn}), but a proof that this is optimal has remained elusive. Still, the limit \[\lim_{n\rightarrow\infty}\frac{\mathop{\mathrm{cr}}(K_n)}{\binom{n}{4}}\] exists and, as evidenced by the aforementioned constructions, is bounded from above by $3/8$. An interesting consequence of this is that this limit differs from the one for rectilinear crossing numbers. The asymptotic behaviours of $\mathop{\mathrm{cr}}(K_{m,n})$ and $\overline{\mathop{\mathrm{cr}}}(K_{m,n})$ are not completely understood either (see~\cite{schaefersurvey} and the references therein). The rectilinear crossing number of $K_n$ is closely related to $k$-sets and $k$-edges\footnote{Given a finite set $P$ of points on the plane, a $k$\textit{-set} is a $k$-element subset $S\subset P$ for which there exists a half-plane $H$ with $H\cap P=S$, and a $k$\textit{-edge} consists of a pair of points $p,q\in P$ such that one of the closed half-planes determined by the line trough $p$ and $q$ contains exactly $k$ points of $P$.}~\cite{k-setslovasz,k-setsbalogh}, as well as to Sylvester's four point problem\footnote{Sylvester's four point problem asks for the probability that four points chosen at random from a region $R$ of the plane lie in convex position. When $R$ is the whole plane, there are a few natural probability distributions from which one can choose the points; this led to some different answers being published. See~\cite{historySylvester} for an overview of the history of the problem.}~\cite{sylvester}. We shall come back to the later of these connections at the end of Section~\ref{sec:graphons}.

Moving on to the computational aspects of the problem, computing the crossing number is known to be NP-complete~\cite{crNP}, while determining the rectilinear crossing number is complete for the existential theory of reals~\cite{cromplexityrectilinear} (and hence NP-hard). In fact, there is some $c>0$ such that approximating the crossing number of $G$ up to a factor of $1+c$ is NP-hard, even for cubic graphs~\cite{cabello2013hardness}. However, for any fixed $k$, there is a linear time algorithm that decides whether $\mathop{\mathrm{cr}}(G)\leq k$~\cite{kawarabayashi2007computing} (in particular, the crossing number is fixed parameter tractable). A considerable amount of work has been put into developing approximation algorithms for both $\mathop{\mathrm{cr}}(G)$ and $\overline{\mathop{\mathrm{cr}}}(G)$. A graph drawing technique of Bhatt and Leighton~\cite{bhattleighton}, in conjunction with the results of Leighton and Rao~\cite{multicommodity} on balanced cuts, can be combined to find, in polynomial time, a straight-line drawing of any bounded degree $n$-vertex graph $G$ with no more than $O(\log^4 n(n+\mathop{\mathrm{cr}}(G))$ crossings. This was later improved to $O(\log^3 n(n+\mathop{\mathrm{cr}}(G))$ by Even et al.~\cite{VLSI}, and then to $O(\log^2 n(n+\mathop{\mathrm{cr}}(G))$ as a result of the improved approximation algorithm for optimal balanced cuts by Arora et al.~\cite{BalancedCut}. It wasn't until several years later that Chuzhoy~\cite{n^9/10approx}, using the edge planarization method\footnote{A subset of the edges of a graph is called \textit{planarizing} if deleting each of its elements results in a planar graph. The method can be summarized as follows: First, one tries to find a small planarizing set of edges and computes a planar drawing of the graph that we get after deleting those edges. Then, the edges of the planarizing set are carefully added to the drawing one by one until we obtain a drawing of the original graph.} from~\cite{planarizationapprox}, found a polynomial-time $O(n^{9/10})$-approximation algorithm for $\mathop{\mathrm{cr}}(G)$ for bounded degree graphs. Building further on the edge planarization method, Kawarabayashi and Sidiropoulos~\cite{kawarabayashi1,kawarabayashi2} improved the approximation ratio to $O(n^{1/2})$, and then Mahabadi and Tan~\cite{1/2-epsilon} found a randomized $O(n^{1/2-\delta})$-approximation algorithm, where $\delta>0$ is a constant.

The celebrated crossing lemma, discovered simultaneously by Ajtai at al.~\cite{crossinglemma} and Leighton~\cite{crossinglemma2}, tells us that $\mathop{\mathrm{cr}}(G)\geq\frac{|E|^3}{64|V|^2}$, so long as $|E|\geq 4|V|$. An immediate consequence of this result is that if $G$ is dense (that is, it has $\Omega(|V|^2)$ edges) then both $\mathop{\mathrm{cr}}(G)$ and $\overline{\mathop{\mathrm{cr}}}(G)$ are $\Omega(|V|^4)$. Fox, Pach and Suk~\cite{approximatingrect} presented an algorithm that constructs a straight-line drawing of $G$ with $\overline{\mathop{\mathrm{cr}}}(G)+o(|V|^4)$ crossings. More precisely, they showed the following. 

\begin{theorem}\label{teo:FoxPachSuk}
    There is a deterministic $n^{2+o(1)}$-time algorithm that computes a straight-line drawing of any given $n$-vertex graph $G$ with no more than \[\overline{\mathop{\mathrm{cr}}}(G)+O(n^4/(\log\log n)^\delta)\] crossings, where $\delta$ is an absolute and positive constant.
\end{theorem}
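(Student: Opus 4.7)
The plan is to follow a regularity-based \emph{reduce, optimize, lift} strategy. First I would apply an algorithmic weak regularity lemma (in the spirit of Frieze--Kannan) to partition $V(G)$ into $k=k(\varepsilon)$ parts $V_1,\dots,V_k$ of nearly equal size, with $\varepsilon=1/(\log\log n)^{\delta'}$ for some $\delta'>\delta$ controlling the approximation. This yields a reduced weighted complete graph $H$ on $k$ vertices, whose edge $ij$ carries weight $d_{ij}=e(V_i,V_j)/(|V_i||V_j|)$. With the right quantitative version, this step runs in time $n^{2+o(1)}$.

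Next I would find a near-optimal straight-line drawing of the weighted reduced graph $H$, where the cost of a drawing is the weighted sum, over pairs of disjoint edges $\{i,j\},\{i',j'\}$, of $d_{ij}d_{i'j'}|V_i||V_j||V_{i'}||V_{j'}|$ multiplied by the indicator that $ij$ and $i'j'$ cross. Because $k$ is bounded in terms of $\varepsilon$ alone, the cost depends only on the order type of the placement of the $k$ points; I would enumerate all $2^{O(k^4)}$ order types, compute the weighted crossing cost for each, and keep the minimizer. This takes time depending on $\varepsilon$ only, hence $n^{o(1)}$ after the choice of $\varepsilon$ is plugged in.

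Finally I would lift this drawing to $G$ by placing the vertices of each $V_i$ inside a tiny disk around the position of $i\in H$, in sufficiently general position so that no three segments are concurrent. Two edges with endpoints in four distinct parts $V_i,V_j,V_{i'},V_{j'}$ then cross in $G$ if and only if $ij$ and $i'j'$ cross in $H$, so the number of crossings of the lifted drawing equals the weighted cost of $H$ plus a negligible $O(n^3)$ contribution from quadruples that reuse a part. The crux is then to show that this weighted cost is at most $\overline{\mathop{\mathrm{cr}}}(G)+O(\varepsilon n^4)$. For this I would start from any optimal straight-line drawing of $G$, build a drawing of $H$ by picking for each $i$ the position of a representative vertex of $V_i$ (e.g.\ one chosen to minimize a local average), and use a counting-lemma style argument: weak regularity controls, for every fixed placement of four representatives, the discrepancy between the number of genuine crossings coming from the four blocks and $d_{ij}d_{i'j'}|V_i||V_j||V_{i'}||V_{j'}|$ times the crossing indicator, and summing over the $\binom{k}{4}$ quadruples gives an error of $O(\varepsilon n^4)$.

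The main obstacle is precisely this last bridge, because a crossing is a $4$-vertex statistic while weak regularity natively controls $2$-vertex statistics (edge densities). One must therefore prove, uniformly over the exponentially many order types, that weak $\varepsilon$-regularity implies a crossing-counting lemma with error $\varepsilon n^4$, and one must calibrate the parameters so that the regularity partition can be computed in $n^{2+o(1)}$ time with $\varepsilon$ as small as $(\log\log n)^{-\delta'}$. Balancing these two constraints is what pins down the admissible value of $\delta$.
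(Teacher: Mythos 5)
Your overall plan---Frieze--Kannan reduction, brute-force optimization over order types of the reduced weighted graph, and lifting back by placing each $V_i$ inside a tiny disk---matches the skeleton shared by Fox, Pach and Suk~\cite{approximatingrect} and by the present paper's proof of Theorem~\ref{teo:main}. You also rightly flag the bridge between $\overline{\mathop{\mathrm{cr}}}(G)$ and the reduced weighted cost as the crux. But the counting-lemma argument you sketch for that bridge does not go through.

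Frieze--Kannan regularity of $G$ controls $2$-point statistics: it bounds $|e_G(S,T)-e_{G_\mathcal{P}}(S,T)|$ over all pairs $S,T$ of vertex sets, and via the usual counting lemma this controls densities of fixed small subgraphs. It says nothing about where the vertices sit in the plane, and the partition $V_1,\dots,V_k$ it produces is a function of $G$'s adjacency structure alone. For your claim---that the number of genuine crossings coming from the four blocks $V_i,V_j,V_{i'},V_{j'}$ is within a small error of $d_{ij}d_{i'j'}|V_i||V_j||V_{i'}||V_{j'}|$ times the representatives' crossing indicator---you would need the crossing relation itself to be nearly homogeneous between those four blocks, i.e.\ that either almost every pair of segments $(uv,u'v')$ crosses or almost none do. Nothing in a Frieze--Kannan partition of $G$ enforces this: inside one such quadruple of blocks you can easily have half the pairs crossing and half not, with the representatives telling you nothing, and $G$ can be regular with respect to $\mathcal P$ regardless. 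What actually makes the bridge work in~\cite{approximatingrect} is a second, genuinely geometric ingredient: the regularity lemma for semi-algebraic relations of~\cite{fox2014density}, applied to the (semi-algebraic, bounded-complexity) crossing relation on the positions in an optimal straight-line drawing. That lemma yields a partition of the \emph{drawn point set} into $\mathrm{poly}(1/\varepsilon)$ cells on which the crossing relation is nearly homogeneous; interleaving this geometric partition with the Frieze--Kannan partition of $G$ is what gives the key stability statement that $d_\square$-close graphs have close rectilinear crossing numbers. The present paper, for the topological crossing number, replaces semi-algebraic regularity with a purely combinatorial cutting-type decomposition of a locally optimal drawing (Theorem~\ref{teo:regions}) to obtain the analogous stability (Theorem~\ref{teo:closegraphs}). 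Either way, some structural control on the \emph{drawing}---not just on $G$---is indispensable, and that is precisely the step your sketch assumes rather than proves.
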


Note that if $G$ is dense then the number of crossings in the drawing provided by this algorithm is $(1+o(1))\overline{\mathop{\mathrm{cr}}}(G)$. We obtain a similar result for the crossing number.

\begin{theorem}\label{teo:main}
    There exists a deterministic $n^{2+o(1)}$-time algorithm that for any given  $n$-vertex graph $G$ approximates $\mathop{\mathrm{cr}}(G)$ up to an additive error of $O(n^4/(\log\log n)^{\delta'})$. Furthermore, there is a randomized polynomial-time algorithm that, with probability $1-o(1)$, computes a drawing of $G$ with \[\mathop{\mathrm{cr}}(G)+O(n^4/(\log\log n)^{\delta'})\] crossings. Here, $\delta'$ denotes an absolute positive constant.
\end{theorem}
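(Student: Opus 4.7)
The strategy is to adapt the method of Fox, Pach and Súk (Theorem~\ref{teo:FoxPachSuk}) from the rectilinear to the topological setting: compress $G$ via a regularity partition into a small weighted template, solve the associated small instance essentially by brute force, and (for the second statement) lift the resulting drawing back to $G$.

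\textbf{Main construction.} First I would apply the Frieze–Kannan weak regularity lemma to $G$ in time $n^{2+o(1)}$, producing a partition $V_1,\ldots,V_k$ of $V(G)$ into $k=(\log\log n)^{\Theta(1)}$ nearly equal parts with bipartite densities $d_{ij}=e(V_i,V_j)/(|V_i||V_j|)$. Associated to this data is a \emph{blow-up crossing number}
\[
\widehat{\mathrm{cr}}(d,m):=\min_{H}\mathop{\mathrm{cr}}(H),
\]
where $H$ ranges over graphs on $k$ parts of size $m$ whose inter-part densities are close to the $d_{ij}$. The goal is to show that $\widehat{\mathrm{cr}}(d,m)/(km)^4$ tends, as $m\to\infty$, to a limit $\widehat{\mathrm{cr}}(d)$, and that
\[
\mathop{\mathrm{cr}}(G)=n^4\,\widehat{\mathrm{cr}}(d)+o(n^4).
\]
The existence of the limit and this sandwiching follow from a cut-distance continuity statement for the normalized $\mathop{\mathrm{cr}}$, itself underpinned by the crossing lemma (which forces $\mathop{\mathrm{cr}}$ to be stable against $o(n^2)$ perturbations of the edge set up to $o(n^4)$ crossings). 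Since $k=(\log\log n)^{O(1)}$, a brute-force enumeration over drawings of a fixed, sufficiently large blow-up (or of an $\varepsilon$-net of weighted drawings) computes $\widehat{\mathrm{cr}}(d)$ to sufficient accuracy in time $n^{o(1)}$; this yields the deterministic additive approximation for $\mathop{\mathrm{cr}}(G)$.

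\textbf{Producing the drawing.} For the randomized construction, I would take a near-optimal drawing of a fixed-size blow-up witnessing $\widehat{\mathrm{cr}}(d)$, place the vertices of each $V_i$ inside a tiny disk around the corresponding super-vertex, and route each edge of $G$ inside a thin tube following the corresponding edge of the blow-up drawing. A uniformly random matching of the vertices of $V_i$ to the points in the $i$-th disk, together with regularity of each pair $(V_i,V_j)$, ensures via a standard concentration argument that the number of crossings concentrates around the predicted value up to an additive $o(n^4)$ error with probability $1-o(1)$.

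\textbf{Main obstacle.} The central difficulty is proving the lower bound $\mathop{\mathrm{cr}}(G)\ge n^4\,\widehat{\mathrm{cr}}(d)-o(n^4)$. Unlike in the rectilinear case, where the positions of the vertices essentially determine the entire drawing and the crossing count can be read off combinatorially, in the topological case edges are arbitrary curves. I would handle this by a sampling argument: pick $m$ vertices uniformly at random from each $V_i$; the induced sub-drawing of the optimal drawing of $G$ has expected crossing number $(mk/n)^4\mathop{\mathrm{cr}}(G)+o((mk)^4)$, and by regularity the corresponding induced subgraph has inter-part densities close to $d_{ij}$ with high probability, hence its drawing witnesses at least $\widehat{\mathrm{cr}}(d,m)$ crossings. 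Rearranging and letting $m\to\infty$ yields the desired bound. The delicate point, and where most of the technical work lives, is making the regularity-style continuity of the normalized $\mathop{\mathrm{cr}}$ under sub-sampling quantitative enough to give the $(\log\log n)^{-\delta'}$ error rate claimed in the theorem.
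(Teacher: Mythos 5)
Your reduction hinges on the identity $\mathop{\mathrm{cr}}(G)=n^4\widehat{\mathrm{cr}}(d)+o(n^4)$, but the quantity $\widehat{\mathrm{cr}}(d,m)$ as you define it — a minimum of $\mathop{\mathrm{cr}}(H)$ over all graphs $H$ on $k$ parts of size $m$ whose inter-part densities are close to the $d_{ij}$ — is in general strictly too small. Matching the densities does not capture the quasi-randomness inside each block that Frieze--Kannan regularity guarantees for $G$, and the minimizer $H$ will exploit this freedom: inside a block of density $d_{ij}<1$ it can cluster the edges (say into a disjoint union of complete bipartite pieces), which lowers the crossing number by a constant factor relative to the quasi-random arrangement. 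Concretely, if $G$ is quasi-random bipartite of density $1/2$ (so $k=2$, $d_{12}=1/2$), Theorems~\ref{teo:closegraphs} and~\ref{teo:blowup} give $\mathop{\mathrm{cr}}(G)\approx\tfrac14\mathop{\mathrm{cr}}(K_{n/2,n/2})$, whereas $H=K_{m/2,m/2}\sqcup K_{m/2,m/2}$ has the required density and $\mathop{\mathrm{cr}}(H)=2\mathop{\mathrm{cr}}(K_{m/2,m/2})\approx\tfrac18\mathop{\mathrm{cr}}(K_{m,m})$; after normalizing, $n^4\widehat{\mathrm{cr}}(d)$ underestimates $\mathop{\mathrm{cr}}(G)$ by roughly a factor of two, so the inequality $\mathop{\mathrm{cr}}(G)\le n^4\widehat{\mathrm{cr}}(d)+o(n^4)$ is simply false. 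The correct object to compute is the exact \emph{weighted} crossing number of the reduced template $G/\mathcal{P}$, which the paper evaluates by brute force via Theorem~\ref{teo:exactcr}, not a minimum over density-matching unweighted graphs.

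The second gap is the assertion that the needed cut-distance continuity of the normalized crossing number "follows from the crossing lemma, which forces $\mathop{\mathrm{cr}}$ to be stable against $o(n^2)$ perturbations of the edge set." Stability under $o(n^2)$ edge \emph{edits} is elementary (every edge insertion or deletion changes $\mathop{\mathrm{cr}}$ by at most $O(n^2)$) and has nothing to do with the crossing lemma. But two graphs that are $\varepsilon$-close in $d_\square$ can differ in $\Theta(n^2)$ edges — a quasi-random graph and its stepfunction approximation, for instance — so edit-distance stability gets you nowhere. Proving that $d_\square$-closeness implies closeness of $\mathop{\mathrm{cr}}$ up to $o(n^4)$ is precisely the paper's Theorem~\ref{teo:closegraphs}; its proof required inventing a region-subdivision analogue of the cutting lemma for arbitrary simple, locally optimal drawings (Theorem~\ref{teo:regions}) together with a careful rerouting argument, and the crossing lemma enters only peripherally, to bound the number of light edges inside the proof of Theorem~\ref{teo:regions}. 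Your sampling argument partially sidesteps this for the lower bound, but the upper bound — lifting a template drawing to a drawing of $G$ — cannot avoid some such continuity statement, and you do not supply a substitute for it.
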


At a high level, the approximation part of the algorithm follows the strategy devised by Fox, Pach and Suk for the proof of Theorem~\ref{teo:FoxPachSuk}. The main novel ingredient that is required for the proof of correctness of this algorithm is Theorem~\ref{teo:closegraphs}, which provides a bound on the difference between the crossing numbers of two graphs of the same order in terms of their distance in the cut norm.

A consequence of the results in~\cite{approximatingrect} and Theorem~\ref{teo:closegraphs} is that the (normalized) crossing and rectilinear crossing numbers are estimable/testable parameters, in the sense of~\cite{borgslimits}. Motivated by this, we define two graphon parameters, which we call the \textit{crossing density} and the \textit{rectilinear crossing density}, by means of continuous analogs of the notions of drawings and straight-line drawings of graphs. We show that both of these parameters are continuous with respect to the cut norm and that, in a precise sense, they behave as the limits of the crossing number and the rectilinear crossing number of graphs. This discussion is directly tied to some of the problems we mentioned earlier about the asymptotic behaviors of crossing numbers. We hope that our results might prove useful in the study of crossing and rectilinear crossing numbers of dense graphs.

\subsection*{Outline of the paper} The basic definitions regarding graphons, cut distance and estimable parameters, as well as some other necessary preliminaries, are included in Section~\ref{sec:prelim}. In Section~\ref{sec:estimatingcr}, we prove Theorem~\ref{teo:main}; most of this section is in fact devoted to the proof of Theorem~\ref{teo:closegraphs}, which was mentioned above. The crossing and rectilinear crossing densities of graphons are defined and studied in Section~\ref{sec:graphons}. Lastly, we discuss some unanswered questions in Section~\ref{sec:final}.

\section{Preliminaries}\label{sec:prelim}

By an \textit{edge-weighted graph}, we mean a graph $G=(V,E)$ where each edge $(u,v)\in E$ has a weight $w_G(u,v)\in[0,1]$ assigned to it. We write $w_G(u,v)=0$ whenever $(u,v)\notin E$. We shall work mostly with edge-weighted graph, and we often refer to them simply as graphs.

\subsection{Crossing numbers of edge-weighted graphs}\label{sec:weightedcrossingnumber}

Next, we extend the definitions of the crossing and the rectilinear crossing numbers to edge-weighted graphs. For an edge-weighted graph $G(V,E)$ and a drawing $\mathcal{D}$ of $G$, let $C(\mathcal{D})$ denote the multi-set of all pairs of edges that cross each other in the drawing, with the proper multiplicity (i.e., if two edges cross each other at $k$ points then this pair appears $k$ times in $C(\mathcal{D})$). Now, let \[\mathop{\mathrm{cr}}(G,\mathcal{D})=\sum_{(e_1,e_2)\in C(\mathcal{D})}w_G(e_1)w_G(e_2)\] and define $\mathop{\mathrm{cr}}(G)$ as the least value of $\mathop{\mathrm{cr}}(G,\mathcal{D})$ over all drawings of $G$. Similarly, $\overline{\mathop{\mathrm{cr}}}(G)$ is the minimum of $\mathop{\mathrm{cr}}(G,\mathcal{D})$ where $\mathcal{D}$ ranges over all straight-line drawings of $G$. We say that a drawing $\mathcal{D}$ of $G$ \textit{attains} $\mathop{\mathrm{cr}}(G)$ if $\mathop{\mathrm{cr}}(G,\mathcal{D})=\mathop{\mathrm{cr}}(G)$, and that it \textit{attains} $\overline{\mathop{\mathrm{cr}}}(G)$ if it is a straight-line drawing and $\mathop{\mathrm{cr}}(G,\mathcal{D})=\overline{\mathop{\mathrm{cr}}}(G)$. A drawing such that no two edges cross more than once and no two adjacent edges cross will be called \textit{simple}. Although it is not as self-evident as in the unweighted case, one can readily show that there must exist a simple drawing which attains $\mathop{\mathrm{cr}}(G)$. Note that, for unweighted graphs, we can assign a weight of $1$ to every edge in order to recover the definitions of crossing number and rectilinear crossing number provided in the introduction.

We will need the following simple result, which fulfills the same role as Lemma 2 in~\cite{approximatingrect}. It seems very likely that a result of this kind has explicitly appeared somewhere else already, but we have been unable to find it.

\begin{theorem}\label{teo:exactcr}
    Let $G$ be an edge-weighted graph on $n$ vertices such that the weight of each edge can be represented using no more than $B$ bits. Then we can find a drawing of $G$ that attains $\mathop{\mathrm{cr}}(G)$ in $2^{O(n^4\log n)}+2^{O(n^2)}B^2$-time.
\end{theorem}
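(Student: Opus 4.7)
The plan is to enumerate all combinatorially distinct simple drawings of $G$ and evaluate the weighted crossing cost of each, organizing the computation so that arithmetic on the $B$-bit weights is almost decoupled from the combinatorial enumeration.

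First, a standard local-redrawing argument (which uses only nonnegativity of the weights) shows that $\mathop{\mathrm{cr}}(G)$ is attained by a simple drawing, so we may restrict attention to these. Any simple drawing has at most $\binom{\binom{n}{2}}{2}=O(n^{4})$ crossings and is determined up to sphere-isotopy by its \emph{extended rotation system}: the cyclic order of edge-ends around each vertex of $G$, together with, for each edge, the ordered sequence of edges crossing it. Equivalently, replacing every crossing by a degree-$4$ vertex yields a planar multigraph $G^{\times}$ on $n+O(n^{4})$ vertices, and the drawing corresponds to a combinatorial embedding of $G^{\times}$ in the sphere. Since each edge is crossed at most $O(n^{2})$ times and its crossing list is encoded in $O(n^{2}\log n)$ bits, an extended rotation system has bit-length $O(n^{4}\log n)$, giving at most $2^{O(n^{4}\log n)}$ candidates.

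I would enumerate all such candidates and, for each, test realizability in $\mathrm{poly}(n)$ time by verifying that the specified rotation system on $G^{\times}$ has genus zero (equivalently, Euler's formula holds for the face count produced by the standard face-tracing procedure). Each realizable candidate $T$ yields a simple drawing whose crossing pattern is a $\{0,1\}$-valued symmetric matrix $A^{T}$ indexed by the edge set of $G$, and the associated weighted cost equals $\sum_{i<j}A^{T}_{ij}\,w_{i}w_{j}$.

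To achieve the additive bound $2^{O(n^{4}\log n)}+2^{O(n^{2})}B^{2}$, I would precompute the table $M_{ij}:=w_{i}w_{j}$ for all pairs $i<j$ once, using $O(n^{4})$ multiplications of $B$-bit numbers and therefore at most $\mathrm{poly}(n)\cdot B^{2}\le 2^{O(n^{2})}B^{2}$ time. Thereafter, the cost of each realizable candidate reduces to a subset-sum of precomputed $M_{ij}$'s, costing $O(n^{4})$ additions of numbers of bit-length $O(B+\log n)$; over all $2^{O(n^{4}\log n)}$ candidates this totals at most $2^{O(n^{4}\log n)}\cdot O(n^{4})\cdot(B+\log n)$, and a case split on whether $B\le 2^{n^{4}\log n}$ shows that this is bounded by $2^{O(n^{4}\log n)}+B^{2}$. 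Reconstructing an actual drawing in the plane from the minimizing planarization takes a further polynomial-time routine. The delicate piece, and the main obstacle, is precisely this accounting --- keeping the $B$-bit arithmetic confined to the one-shot precomputation and cheap subset-sums, rather than letting it multiply against the full $2^{O(n^{4}\log n)}$ enumeration.
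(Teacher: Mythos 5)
Your proposal is correct, and it follows the same brute-force strategy as the paper's proof: enumerate all combinatorial representations of simple drawings, test realizability, and evaluate the weighted crossing cost. The paper organizes the enumeration slightly differently — it loops over crossing sets $C\subseteq\binom{E}{2}$, and for each fixed $C$ enumerates the orders of the crossings along every edge, testing each by placing dummy vertices at crossings and running a linear-time planarity test — whereas you enumerate extended rotation systems directly and test genus zero of the induced embedding of $G^\times$. These encode essentially the same information, and both give a $2^{O(n^4\log n)}$-size search space with a polynomial-time realizability check.

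The more substantive (and useful) difference is in the complexity accounting for the $B^2$ term. The paper computes the cost of each realizable candidate separately in $O(n^2B^2)$ time and multiplies by the number of candidates, which is of order $2^{\Theta(n^4)}$; read literally, this gives $2^{O(n^4)}B^2$ rather than the $2^{O(n^2)}B^2$ claimed. Your precomputation of the table $M_{ij}=w_iw_j$ once, in $\mathrm{poly}(n)B^2\le 2^{O(n^2)}B^2$ time, followed by additions only of $O(B+\log n)$-bit numbers per candidate and a threshold comparison between $B$ and $2^{\Theta(n^4\log n)}$, cleanly decouples the $B^2$ work from the enumeration and actually delivers the stated bound; this is an improvement over the argument as written.

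One small detail to tighten: the ``extended rotation system'' as you describe it (rotations at the original vertices plus the ordered crossing lists per edge) does not uniquely determine the cyclic order at the degree-4 crossing vertices of $G^\times$ — there are two mirror-image choices at each. You should either include these $O(n^4)$ extra bits in the candidate (which only changes the constant in $2^{O(n^4\log n)}$) or observe that the choice affects neither the crossing set nor the cost and so can be fixed arbitrarily before the genus test. With that clarification the proposal is sound.
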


\begin{proof}
    Given a set $C\subset\binom{E}{2}$, we can determine whether there is a simple drawing $\mathcal{D}$ of $G$ with $C(\mathcal{D})=C$ in the following way: For any given edge $e\in E$, there are less than $n^2!$ possible orders in which the points where $e$ crosses the other edges may appear along the curve representing $e$. For a fixed order of the crossings along every edge, we can test whether it arises from an actual drawing of $G$ by placing a dummy vertex at each crossing and then using a linear time planarity testing algorithm (see~\cite{patrignani2013planarity}). Thus, we can test whether $C$ actually comes from a drawing in $n^2!^{O(n^2)}=2^{O(n^4\log n)}$-time.
  
    If we are given $C(\mathcal{D})$, then we can compute $\mathop{\mathrm{cr}}(G,\mathcal{D})$ in $O(n^2B^2)$-time. Note also that the number of possible $C$'s we must check is $2^{|E|}\leq 2^{\binom{n}{2}}$. Hence, we can find a simple drawing $\mathcal{D}$ of $G$ that attains $\mathop{\mathrm{cr}}(G)$ in $2^{O(n^4\log n)}+O(2^{\binom{n}{2}}n^2B^2)$-time.
\end{proof}

In order to study graphons in section~\ref{sec:graphons}, it will be convenient to have a normalized version of the crossing and rectilinear crossing numbers. With this in mind, for an edge-weighted graph $G$ on $n$ vertices we define its \textit{crossing density} as $\mathop{\mathrm{cd}}(G)=\mathop{\mathrm{cr}}(G)/n^4$ and its \textit{rectilinear crossing density} as $\overline{\mathop{\mathrm{cd}}}(G)=\overline{\mathop{\mathrm{cr}}}(G)/n^4$.

\subsection{Cut distance and graphons}\label{sec:cutdistance}

We expect the reader to be somewhat familiar with the theory of graphons and convergent sequences of dense graphs, but include the required fundamental definitions for the sake of completeness. We refer the reader to the book by Lovász~\cite{lovaszlimits}, which we follow rather closely during for the remainder of the section, for an in-depth treatment of the subject.

Let $G=(V,E)$ be an edge-weighted graph. For any two subsets $S,T\subset V$, let $E_G(S,T)$ denote the set of edges with one endpoint in $S$ and the other one in $T$, and let $e_G(S,T)$ be the total weight of the elements of $E_G(S,T)$, where edges with both endpoints in $S\cap T$ are counted twice. 

Given a positive integer $m$, the \textit{m blow-up} $G[m]$ of $G$ is the edge-weighted graph obtained by replacing each vertex $v$ of $G$ by an independent set $U_v$ with $m$ elements, and then setting the weight of every edge between $U_u$ and $U_v$ (with $u\neq v$) to be $w_G(u,v)$. For any partition $\mathcal{P}=\{V_1,V_2,\dots,V_n\}$ of $V$, let $G_\mathcal{P}$ denote the edge-weighted graph with vertex set $V$ such that \[w_G(u,v)=\frac{e_G(V_i,V_j)}{|V_i||V_j|}\] whenever $u\in V_i$ and $v\in V_j$. Also, let $G/\mathcal{P}$ denote the graph with vertex set $\{1,2,\dots,n\}$ and edge weights \[w_{G/\mathcal{P}}(i,j)=\frac{e_G(V_i,V_j)}{|V_i||V_j|}.\]

The \textit{labeled cut distance} between two edge-weighted graphs $G_1$ and $G_2$ on the same finite vertex set $V$ is defined as \[d_\square(G_1,G_2)=\max_{S,T\subset V}\frac{|e_{G_1}(S,T)-e_{G_2}(S,T)|}{|V|^2}.\](Note: The reader who is only interested in the proof of Theorem~\ref{teo:main} can now skip to subsection~\ref{sec:Frieze-Kannan}.)

If $G_1$ and $G_2$ are are defined on possibly different $n$-element vertex sets, then we write \[\widehat{\delta}_\square(G_1,G_2)=\min_{G_1',G_2'}d_\square(G_1',G_2'),\] where $G_1'$ and $G_2'$ range over all graphs with vertex set $\{1,2,\dots,n\}$ which are isomorphic to $G_1$ and $G_2$, respectively. We are now ready to define the cut distance between two arbitrary graphs. Given two edge-weighted graphs $G_1$ and $G_2$ on $m$ and $n$ vertices, respectively, the \textit{cut distance} between them is given by \[\delta_\square(G_1,G_2)=\lim_{k\rightarrow\infty}\widehat{\delta}_\square(G_1[kn],G_2[km]),\] which can be shown to be well defined. The distance function $\delta_\square$ is a pseudometric on the set of edge-weighted graphs (it is symmetric and satisfies the triangle inequality, but $\delta_\square(G_1,G_2)=0$ does not imply that $G_1$ and $G_2$ are isomorphic). We say that a sequence edge-weighted graphs $G_1,G_2,\dots$ is \textit{convergent} if it is Cauchy with respect to $\delta_\square$. 

Throughout this paper, measurability is always considered with respect to the Lebesgue $\sigma$-algebra, although working with Borel measurability would not make a significant difference. We denote the Lebesgue measure by $\lambda$. A \textit{kernel} is a symmetric measurable function $W:[0,1]^2\rightarrow\mathbb{R}$ (by symmetric, we mean that $W(x,y)=W(y,x)$); the space of all kernels is denoted by $\mathcal{W}$. A \textit{graphon} is a kernel whose image is a subset of $[0,1]$, and we denote the space of graphons by $\mathcal{W}_0$. 

The \textit{cut norm} on $\mathcal{W}$ can be written as \[||W||_\square=\sup_{S,T\subseteq[0,1]}\left|\int_{S\times T} W(x,y)\ dx\ dy\right|,\] and the \textit{labeled cut distance} between two kernels $W_1$ and $W_2$ is $d_\square(W_1,W_2)=||W_1-W_2||_\square$. A measurable function $\phi:[0,1]\rightarrow [0,1]$ is \textit{measure preserving} if $\lambda(S)=\lambda(\phi^{-1}(S))$ for every measurable $S\subseteq[0,1]$. For a kernel $W$ and a measure preserving $\phi:[0,1]\rightarrow[0,1]$, let $W^\phi$ denote the kernel with $W^\phi(x,y)=W(\phi(x),\phi(y))$. Now, for any two kernels $W_1$ and $W_2$, the \textit{cut distance} between them is defined as \[\delta_\square(W_1,W_2)=\inf_{\phi}d_\square(W_1,W_2^\phi),\] where $\phi$ ranges over all invertible measure preserving maps from $[0,1]$ to itself. The distance function $\delta_\square$ is a pseudometric on the space $\mathcal{W}$, and two kernels $W_1$ and $W_2$ are said to be \textit{weakly isomorphic} if $\delta_\square(W_1,W_2)=0$. Thus, $\delta_\square$ induces a metric on the quotient space $\widetilde{\mathcal{W}_0}$ that arises from $\mathcal{W}_0$ after identifying all classes of weakly isomorphic graphons. One of the central results in the theory of graphons is that $(\widetilde{\mathcal{W}_0},\delta_\square)$ is compact (cf.~\cite{compact}).

For every graph $G$ with vertices $v_1,v_2,\dots,v_n$, the graphon $W_G$ is constructed by splitting $[0,1]$ into $n$ intervals $I_1,I_2,\dots,I_n$ of measure $1/n$ and setting $W_G(x,y)=w_G(v_i,v_j)$ for all $x\in I_i,y\in I_j$. To be precise, $W_G$ depends on both $G$ and an ordering of its vertices, but this will not be an issue, since any two such graphons that arise from the same graph are weakly isomorphic. Given any two edge-weighted graphs $G_1$ and $G_2$, we have that $\delta_\square(G_1,G_2)=\delta_\square(W_{G_1},W_{G_2})$. For every convergent graph sequence $G_1,G_2,\dots$, the graphons $W_{G_1},W_{G_2},\dots$ converge, with respect to $\delta_\square$, to some graphon $W$ (or, rather, to the class of $W$ in $\widetilde{\mathcal{W}_0}$), and we say that the graph sequence converges to $W$. Conversely, for every graphon $W$ there exists a convergent graph sequence $G_1,G_2,\dots$ such that $W_{G_1},W_{G_2},\dots$ converges to $W$ with respect to $\delta_\square$.

If $W$ is a kernel and $\mathcal{P}=\{S_1,S_2,\dots,S_n\}$ is a partition of the unit interval into measurable sets, then $W_\mathcal{P}$ denotes the kernel such that \[W_\mathcal{P}(x,y)=\frac{1}{\lambda(S_i)\lambda(S_j)}\int_{S_i\times S_j}W(x,y)\ dx\ dy\] whenever $x\in S_i$ and $y\in S_j$. Let $\mathcal{P}_1,\mathcal{P}_2,\dots$ be a sequence of partitions of $[0,1]$ into measurable sets such that each pair of points of $[0,1]$ lie in different parts for all but a finite number of partitions of the sequence. Then, as $n$ goes to infinity, $W_{\mathcal{P}_n}$ converges to $W$ almost everywhere for every $W\in\mathcal{W}$.

Given a graph $F=(V,E)$ and a kernel $W$, let \[t(F,W)=\int_{[0,1]^V}\prod_{(u,v)\in E}W(x_u,x_v)\prod_{u\in V}dx_u.\]

It is well known that two kernels $W_1$ and $W_2$ are weakly isomorphic if and only if $t(F,W_1)=t(F,W_2)$ for every simple graph $F$. Moreover, a sequence of kernels $W_1,W_2,\dots$ is convergent with respect to $\delta_\square$ if and only if the limit $\lim_{n\rightarrow\infty}t(F,W_n)$ exists for every graph $F$.

\subsection{Estimable parameters}\label{sec:estimable}

By a \textit{graph parameter}, we mean a function that assigns a real number to each graph (or edge-weighted graph) and is constant on each isomorphism class. A graph parameter $f$ is said to be \textit{estimable} if there is another graph parameter $g$, which we call a \textit{test parameter for $f$}, with the following property: For every $\varepsilon>0$ there exist an integer $k$ such that if $G=(V,E)$ is a graph on at least $k$ vertices and $X$ is a random $k$-element subset of $V$, then \[\mathbb P[|f[G]-g(G[X])|>\varepsilon]\leq\varepsilon,\] where $G[X]$ denotes the subgraph of $G$ that is induced by $X$. It is no hard to see that if $f$ is estimable then can always use $g=f$ (cf.~\cite{goldreichtrevisan,borgslimits}). As shown by Borgs et al.~\cite{borgslimits}, the following properties are equivalent for every graph parameter $f$.

\begin{enumerate}[label=(\alph*)]
    \item $f$ is estimable.
    
    \item For every convergent sequence of graphs $G_1,G_2,\dots$, the sequence $f(G_1),f(G_2),\dots$ converges as well.

    \item There exists a functional $\hat{f}$ on $\mathcal{W}_0$ that is continuous with respect to the cut norm and such that $\hat{f}(W_G)-f(G)\rightarrow 0$ as the number of vertices of $G$ goes to infinity. 
\end{enumerate}

If $f$ is estimable, then the functional $\hat{f}$ mentioned in (c) also satisfies $\lim_{n\rightarrow\infty}f(G_n)=\hat f(W)$ whenever $G_1,G_2,\dots$ converges to $W$ and the number of vertices of $G_n$ goes to infinity with $n$. We often refer to a functional on $\mathcal{W}_0$ as a graphon parameter. It was also proven in~\cite{borgslimits} that the following three properties together are equivalent to $f$ being estimable.

\begin{enumerate}[label=(\roman*)]
    \item For every $\varepsilon>0$, there exists an $\varepsilon'$ such that any two graphs $G_1$ and $G_2$ on the same vertex set with $d_\square(G_1,G_2)<\varepsilon'$ satisfy $|f(G_1)-f(G_2)|<\varepsilon$.
    
    \item For every graph $G$, $f(G[m])$ converges as $m$ goes to infinity.

    \item  $|f(G)-f(G\cup K_1)|\rightarrow0$ as the number of vertices of $G$ goes to infinity. Here, $G\cup K_1$ denotes the graph obtained by adding an isolated node to $G$.
\end{enumerate}

This equivalence will come in handy in Section~\ref{sec:estimatingcr}, since these properties are easier to check than (b) and (c) above.

\subsection{The Frieze-Kannan regularity lemma}\label{sec:Frieze-Kannan}
Given a graph $G=(V,E)$, two sets of vertices $S,T\subset V$ and some $\varepsilon>0$, we say that the pair $(S,T)$ is $\varepsilon$-regular if for any $S'\subset S$, $T'\subset T$ \[\left|\frac{e_G(S',T')}{|S'||T'|}-\frac{e_G(S,T)}{|S||T|}\right|\leq\varepsilon.\] According to Szemerédi's regularity lemma~\cite{Szemeredi}, for every $\varepsilon>0$ there exists an $M(\varepsilon)$ such that for every graph we can find an equitable partition of its vertices into no more than $M(\varepsilon)$ parts with the property that all but at most an $\varepsilon$ fraction of the pairs of parts are $\varepsilon$-regular. Szemerédi's regularity lemma is one of the most powerful tools in the study of dense graphs; unfortunately, it is not very practical for algorithmic purposes, since $M(\varepsilon)$ grows extremely fast as $\varepsilon$ goes to $0$ (see~\cite{gowersregularity} for details on the asymptotic behavior of the optimal value of $M(\varepsilon)$). As was done by the authors of~\cite{approximatingrect}, we circumvent this issue by means of a variant of the regularity lemma developed by Frieze and Kannan~\cite{FriKan}. Given a graph $G=(V,E)$, we say that an equitable partition $\mathcal{P}=\{V_1,V_2,\dots,V_n\}$ of $V$ is a \textit{Frieze-Kannan} $\varepsilon$-\textit{regular} partition if $d_\square(G,G_\mathcal{P})\leq\varepsilon$. The algorithmic version of the Frieze-Kannan regularity lemma stated below is due to Dellamonica et al.~\cite{FriKanAlgorithm}.

\begin{lemma}\label{teo:FriezeKannan}
    There exist a deterministic algorithm and an absolute constant $c$ that, for any $\varepsilon>0$ and any $n$-vertex graph $G$, computes a Frieze-Kannan $\varepsilon$-regular partition of $G$ with no more than $2^{\varepsilon^{-c}}$ classes in $2^{2^{\varepsilon^{-c}}} n^2$-time.
\end{lemma}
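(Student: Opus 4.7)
The plan is to mirror the classical density-increment proof of the existential Frieze-Kannan regularity lemma, then overlay the algorithmic ingredients needed to hit the stated time bound. For a partition $\mathcal{P}=\{V_1,\dots,V_k\}$ of $V(G)$, define the mean-square potential
$$q(\mathcal{P})=\frac{1}{n^2}\sum_{i,j=1}^{k}\frac{e_G(V_i,V_j)^2}{|V_i|\,|V_j|}\in[0,1],$$
which is the squared $L^2$ norm of the piecewise-constant stepping $G_\mathcal{P}$ and is nondecreasing under refinement. The crux is a single energy-increment step: if $\mathcal{P}$ is not Frieze-Kannan $\varepsilon$-regular, witnessed by sets $S,T\subseteq V$ with $|e_G(S,T)-e_{G_\mathcal{P}}(S,T)|>\varepsilon n^2$, then the common refinement $\mathcal{P}'$ of $\mathcal{P}$ with $\{S,S^c\}$ and $\{T,T^c\}$ has at most $4k$ parts and satisfies $q(\mathcal{P}')\ge q(\mathcal{P})+\varepsilon^2$, via a short class-by-class Cauchy-Schwarz computation.

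Iterating, the potential saturates after at most $\varepsilon^{-2}$ refinements, so the output partition has at most $4^{\varepsilon^{-2}}=2^{O(\varepsilon^{-2})}$ classes, which fits inside $2^{\varepsilon^{-c}}$ for any fixed $c\ge 2$. A final post-processing stage equalizes class sizes by splitting oversized parts into blocks of size $\lfloor n/k\rfloor$ and absorbing leftover vertices, inflating the number of parts by at most a constant factor, which does not change the asymptotic bound.

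The algorithmic layer needs, at each iteration, either to certify Frieze-Kannan $\varepsilon$-regularity or to output explicit witnesses $S,T$. Computing the cut norm of the residual $A=G-G_\mathcal{P}$ exactly is NP-hard, so one settles for a constant-factor approximation (which only alters the constant $c$). The key observation is that $A$ is block-constant plus a $\{0,1\}$-valued sparse perturbation, and its low-rank structure indexed by $\mathcal{P}$ can be exploited: one can combine a spectral step (power iteration against the $k\times k$ block-density matrix, which we maintain incrementally across iterations) with a greedy alternating maximization $S\leftrightarrow T$ on the full vertex set to find a witness pair or to conclude regularity. Amortizing updates of the $k\times k$ densities matrix keeps each iteration at cost roughly $O(n^2)+2^{O(\varepsilon^{-c})}$, summing over $\varepsilon^{-2}$ iterations to the claimed $2^{2^{\varepsilon^{-c}}}n^2$ budget.

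The main obstacle will be simultaneously (a) making the witness-detection step fully deterministic with no regularity hypothesis on $G$, (b) ensuring the approximation ratio in the cut-norm estimator is a genuine constant rather than logarithmically degrading, and (c) keeping the per-iteration cost linear in $n^2$ rather than paying an extra factor for each rescan of the vertex set. These are precisely the engineering issues resolved by the deterministic algorithm of Dellamonica et al.~\cite{FriKanAlgorithm}, whose machinery I would invoke as a black box to close out the proof rather than reconstruct in full.
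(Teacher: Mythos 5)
The paper does not prove this lemma at all: it is stated as a citation to Dellamonica et al.~\cite{FriKanAlgorithm}, so there is no "paper's own proof" to measure your argument against. Your proposal ultimately does the same thing — after sketching the energy-increment strategy and the algorithmic obstacles, you invoke \cite{FriKanAlgorithm} as a black box to close the argument — so in substance you and the paper land in exactly the same place.

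That said, the extra scaffolding you supply is a faithful and correct recollection of the standard route: the mean-square potential $q(\mathcal{P})$, the $\varepsilon^2$ increment from a witness pair $(S,T)$ via Cauchy–Schwarz, the $4^{\varepsilon^{-2}}$ bound on classes after at most $\varepsilon^{-2}$ refinements, the equitable post-processing, and the observation that exact cut-norm computation is NP-hard so one must make do with a constant-factor deterministic approximation (which is what feeds into the absolute constant $c$). One small quibble: $4^{\varepsilon^{-2}}=2^{O(\varepsilon^{-2})}\le 2^{\varepsilon^{-c}}$ requires $c$ strictly larger than $2$ once the hidden constant and the approximation-ratio loss are accounted for, so "any fixed $c\ge 2$" should read "some absolute $c$" — but since both you and the lemma only assert the existence of such a $c$, this is cosmetic. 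Since the paper treats the lemma as an imported tool, your sketch goes further than the paper requires, but it is consistent with what \cite{FriKanAlgorithm} actually does, and invoking that reference for the deterministic witness-finding machinery is exactly the right move.
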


\subsection{Cycle separators for planar graphs}

The planar separator theorem from~\cite{ungar,liptontarjan} states that for every $n$-vertex planar graph $G=(V,E)$ there exists a partition $V=A\sqcup B\sqcup C$ such that $|B|=O(\sqrt{n})$, $|A|,|C|\leq 2n/3$, and there are no edges between $A$ and $C$. This result is a cornerstone of the study of planar graphs, and several generalizations and variants of it have been discovered over the years. We will make use of the following version, due to Miller~\cite{cycleseparator}.

\begin{theorem}\label{teo:cycleseparator}
    Let $G$ be an embedded triangulation\footnote{A \textit{triangulation} is a maximal planar graph. By an \textit{embedded} planar graph we simply mean the graph together with a drawing of it where no two edges cross. Every face of an embedded triangulation has precisely three vertices on its boundary. We remark that all ways of embedding a triangulation are, in a precise sense, combinatorially equivalent, so the theorem could have also been stated without ever referring to a particular embedding.} with a non-negative weight assigned to each vertex so that the total sum of the weights is $1$. Then, there exists a simple cycle of length at most $L\sqrt{n}$ in $G$ such that the vertices that lie in its interior have total weight at most $2/3$ and the same is true for those that belong to its exterior. Here, $L$ is an absolute positive constant.
\end{theorem}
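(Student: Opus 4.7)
The plan is to follow Miller's original strategy, which combines the Koebe--Andreev--Thurston circle packing theorem with a Möbius--centering argument and a random great circle. First, embed the triangulation $G$ as the tangency graph of a collection $\{D_v\}_{v\in V}$ of interior-disjoint closed caps on the unit sphere $S^{2}\subset\mathbb{R}^{3}$; the existence of such a representation (every face corresponds to a triple of mutually tangent caps) is the content of the circle packing theorem, and it is here that we use that $G$ is a triangulation. Under this representation, a great circle $\gamma\subset S^{2}$ that avoids all tangency points meets some subset $C\subset V$ of caps; the caps disjoint from $\gamma$ split into those lying in each open hemisphere, giving a partition $V = A\sqcup C\sqcup B$ with no edges between $A$ and $B$. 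Because $\gamma$ is a Jordan curve and the tangency pattern around it is cyclic, one can extract a simple cycle in $G$ whose vertex set is contained in $C$ — this is the cycle separator we seek.

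Next I would handle the balance condition using a Möbius centering argument. Assign each cap $D_v$ the weight $w(v)$; a theorem (going back to Douady--Earle / Marden and used by Miller and by Spielman--Teng) says that one can post-compose the circle packing with a Möbius transformation of $S^{2}$ so that the weighted center of mass $\sum_v w(v)\,z_v$ of the resulting cap centers $z_v\in S^{2}$ is the origin. The proof is a continuity/fixed-point argument on the open $3$-ball of conformal automorphisms fixing a hemisphere. Once the centroid is at the origin, a standard averaging shows that for \emph{every} hemisphere $H$ one has $\sum_{v:\,z_v\in H} w(v)\le 2/3$ (one can argue via an antipodal map and a convexity inequality applied to the weight of caps intersecting any closed hemisphere). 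Combined with the previous paragraph, any great circle through the origin gives an admissible weight split between its two open hemispheres.

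Finally, to control the length of the cycle we use a probabilistic argument. Pick a great circle $\gamma$ uniformly at random among those through the origin (i.e. an equatorial great circle of a random direction). The probability that $\gamma$ crosses a particular cap $D_v$ is proportional to the spherical radius $r_v$ of $D_v$; since the caps are interior-disjoint, $\sum_v r_v^{\,2}=O(1)$, and by Cauchy--Schwarz $\sum_v r_v = O(\sqrt{n})$. Hence the expected size of $C$ is $O(\sqrt{n})$, so some choice of $\gamma$ yields $|C|\le L\sqrt{n}$ for an absolute constant $L$; a tiny perturbation ensures $\gamma$ avoids all tangency points, and the weight balance is preserved because the weight of $\{v:z_v\in\overline H\}$ is upper semicontinuous in $H$.

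The main obstacle I expect is the Möbius centering step: showing that the conformal automorphism group of $S^{2}$ acts in such a way that the weighted centroid of the caps can always be pushed to the origin, and that this simultaneously forces every hemisphere weight to be at most $2/3$. The topological fixed-point argument is short but subtle, and without it one only knows that \emph{some} plane separates well, not that a great circle through the origin does — which in turn is what allows the $O(\sqrt n)$-length bound via the disjointness $\sum r_v^{\,2}\le 4$. Extracting an actual simple cycle (rather than a cycle cover) from $C$ is routine once one observes that traversing $\gamma$ induces a cyclic order on the crossed caps and that consecutively crossed caps are tangent (and hence adjacent in $G$).
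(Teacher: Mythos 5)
The paper does not prove this statement; it is cited directly as Miller's cycle separator theorem~\cite{cycleseparator}. Miller's actual proof uses a breadth-first-search level structure together with a shrinking argument, not circle packings, so your proposal is a genuinely different route (closer in spirit to Miller--Teng--Thurston--Vavasis and Spielman--Teng). That said, there is a real gap in the centering step that makes the argument fail as written.

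You claim that after a M\"obius transformation makes the Euclidean centroid $\sum_v w(v)\,z_v$ of the cap centers equal to the origin, every open hemisphere carries weight at most $2/3$. This is false: the centroid condition gives \emph{no} bound on hemisphere weight. For example, place mass $a$ uniformly on the horizontal circle at height $z=-c$ and mass $1-a$ at the north pole; the centroid vanishes precisely when $a=1/(1+c)$, so as $c\to 0^+$ you can push $a$ arbitrarily close to $1$ while keeping the centroid at the origin, and the open lower hemisphere then carries weight arbitrarily close to $1$. (Nothing about disjointness of the caps rules this out.) The quantity that does control hemispheres is the \emph{centerpoint} of the cap centers in $\mathbb{R}^3$, not the centroid, and the topological degree argument is used to move the centerpoint to the origin. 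But the centerpoint theorem in $\mathbb{R}^3$ only guarantees that every open halfspace through the origin carries at most $3/4$ of the weight, not $2/3$. So even after repairing the centering step you obtain a $3/4$-balanced geometric separator, which is weaker than the $2/3$ in the statement; getting $2/3$ is exactly what Miller's BFS-level technique buys you. (For what it's worth, $3/4$ in place of $2/3$ would still suffice for the paper's application, since the recursion in the proof of Theorem~\ref{teo:regions} only needs a fixed balance constant strictly less than $1$, but it does not prove the theorem as stated.) Secondly, the step where you extract a \emph{simple cycle} from the set of crossed caps is also more delicate than you indicate: in a circle packing of a triangulation, a great circle alternates between caps and the curvilinear triangular gaps between mutually tangent caps, so consecutive crossed caps along $\gamma$ need not be tangent, and what one gets directly is a separating vertex set rather than a cycle. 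Turning this into a genuine simple cycle of comparable length requires an additional argument.
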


\subsection{Triangulating with small degrees}\label{teo:triangulation}

In order to make use of Theorem~\ref{teo:cycleseparator}, we need to be able to transform embedded planar graphs into embedded triangulations by adding some nodes and edges. Moreover, it will be important for our purposes that the arising triangulation does not have too many new vertices, and that degrees of its nodes are not too large. The following result, which essentially appears as Lemma 2.1 in~\cite{pach2005crossing}, allows us to do just that.

\begin{lemma}
    Let $G$ be a connected, embedded, $n$-vertex planar graph. Suppose that every vertex of $G$ has degree at most $d$ for some $d\geq 3$. Then, the embedding can be extended to a triangulation with at most $19n$ vertices and all degrees bounded from above by $3d$.
\end{lemma}

\noindent\textit{Remark.} The proof of this lemma is rather simple. In\cite{pach2005crossing}, the result is stated for two-connected graphs embedded in a genus $g$ surface. An inspection of the proof reveals that the two-connectedness is not required for graphs embedded in the plane. 

\section{Estimating the crossing number}\label{sec:estimatingcr}

\subsection{Subdividing drawings}

We say that a drawing $\mathcal{D}$ of an edge-weighted graph $G$ is \textit{locally optimal} if the value of $\mathop{\mathrm{cr}}(G,\mathcal{D})$ cannot be reduced by erasing a single curve that represents an edge and then redrawing the edge in some other way. If there are no edges of weight $0$, then any locally optimal drawing is necessarily simple. We remark, however, that not every locally optimal drawing attains the crossing number of the graph. This section begins with a result about subdividing simple and locally optimal graph drawings, which is somewhat similar in spirit to the classical cutting lemma for line arrangements\footnote{The cutting lemma is a powerful tool in both discrete and computational geometry. In its two dimensional version, it can be stated as follows: Let $\mathcal{L}$ be a finite family of lines on the plane and let $\varepsilon>0$. Then, it is possible to subdivide the plane into $O(1/\varepsilon^2)$ convex regions, none of which is crossed by more than $\varepsilon|\mathcal{L}|$ lines of $\mathcal{L}$. See~\cite{cuttingsapplications1,cuttingsapplications2} for various proofs of this result and its higher dimensional variants, as well as several of their applications.}, and will play a crucial role in our proof that any two graphs which are close with respect to $d_\square$ have similar crossing numbers (see Theorem~\ref{teo:closegraphs}).

\begin{theorem}\label{teo:regions}
    Let $G$ be an $n$-vertex edge-weighted graph and let $\mathcal{D}$ be a simple and locally optimal drawing of $G$. Then, for any $\varepsilon\in(0,1)$ the plane can be subdivided into $O(1/\varepsilon^2)$ closed, connected and interior disjoint regions, each of which has the following properties: 
    \begin{enumerate}[label=(\Roman*)]
        \item No vertex lies on its boundary.
        \item It contains at most $\lceil\varepsilon^2n\rceil$ vertices.
        \item Any vertex of $G$ and any other point which are both contained in the region can be connected by a simple curve that lies completely within the region and does not go through any vertex or crossing, and whose relative interior has no more than $\varepsilon n^2$ intersection points with the curves that represent the edges of $G$ with no endpoint in that same region.
    \end{enumerate} 
\end{theorem}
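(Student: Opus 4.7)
The plan is to construct the subdivision recursively using Theorem~\ref{teo:cycleseparator}, following the classical template of recursive planar separators.

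First I will build the \textit{planarization} of the drawing: the embedded planar graph $H$ whose vertices are the $n$ vertices of $G$ together with all the crossings of $\mathcal{D}$, and whose edges are the maximal crossing-free arcs of $\mathcal{D}$. By adding fake edges inside each non-triangular face, I obtain an embedded triangulation $T$; since $\mathcal{D}$ is simple, $|V(T)| = O(n^{4})$.

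Next, starting from the whole plane as a single region, I greedily split any region $R$ in the current family that contains more than $\lceil \varepsilon^2 n\rceil$ vertices of $G$. For such an $R$, I consider the restriction $T_R$ of $T$ to $R$, assign weight $1/k_R$ to each of the $k_R$ vertices of $G$ in $R$ and weight $0$ to every other vertex of $T_R$, and invoke Theorem~\ref{teo:cycleseparator} to obtain a simple cycle in $T_R$ that splits the $G$-vertices of $R$ in the $1/3$--$2/3$ ratio. After a tiny perturbation guaranteeing that the cycle meets no vertex of $G$, this cycle is used to subdivide $R$ into two subregions, each containing at most $(2/3)k_R$ of the vertices of $G$ originally in $R$. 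Since every split reduces the heaviest child's vertex count by a factor of at least $2/3$, the recursion terminates after $O(\log(1/\varepsilon))$ levels and, by the standard recursive-separator counting argument, produces $O(1/\varepsilon^{2})$ leaf regions. Properties (I) and (II) then hold by construction.

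The main obstacle is property (III). I will first reformulate it as a counting statement: if a region $R$ is traversed by at most $\varepsilon n^2$ \textit{external arcs} -- curves drawing edges of $G$ with no endpoint in $R$ -- then for any vertex $v \in R$ and any point $p \in R$ I can find a curve in $R$ from $v$ to $p$ crossing each external arc at most once, which yields (III). Since each external arc contributes at least two intersections to $\partial R$, it is enough to show that $\partial R$ has at most $2\varepsilon n^2$ intersections with the edges of $G$.

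The delicate part is arranging the recursion so that this last bound holds on every leaf's boundary. Since $\mathcal{D}$ is simple, restricting the planarization to the subgraph induced by the vertices of $R$ gives a triangulation on $O(k_R^4)$ vertices, so the cycle produced by Theorem~\ref{teo:cycleseparator} in a region with $k$ vertices of $G$ has length $O(k^{2})$ in edges of the triangulation, which translates to $O(k^{2})$ intersections with the edges of $G$ after realizing the cycle as a Jordan curve. As $k$ shrinks by a factor of $2/3$ along any root-to-leaf path, these contributions $O((2/3)^{2\ell}n^{2}) = O((4/9)^{\ell}n^{2})$ form a geometrically decreasing sequence. A careful bookkeeping argument -- which I expect to be the hardest part of the proof -- is then needed to argue that only a small fraction of each ancestor separator actually ends up on the boundary of any fixed leaf (the rest being subsumed by deeper separators), and that once this refined charging is done, the accumulated intersection count on the boundary of each leaf is $O(\varepsilon n^{2})$, giving (III).
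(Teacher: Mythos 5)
Your plan correctly identifies the first stage of the paper's argument (recursive cycle separators applied to the planarization of $\mathcal{D}$), and properties (I) and (II) do come out of that stage. But there are two serious problems with the way you try to get (III).

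First, the bound on the separator length is wrong. You claim that in a region $R$ containing $k_R$ vertices of $G$, the restricted triangulation $T_R$ has $O(k_R^4)$ vertices "since restricting the planarization to the subgraph induced by the vertices of $R$ gives a triangulation on $O(k_R^4)$ vertices." But $T_R$ is not the planarization of $G[V_R]$; it contains dummy vertices for \emph{every} crossing of $\mathcal{D}$ that happens to lie inside $R$, including crossings between edges neither of whose endpoints is in $R$. So $T_R$ can have $\Theta(n^4)$ vertices even when $k_R$ is tiny, the separator can have $\Theta(n^2)$ nodes at every level, and the claimed geometric decay $O((4/9)^{\ell} n^2)$ fails. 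The paper only manages to bound the \emph{total} boundary complexity of all fundamental regions by $O(n^2/\varepsilon)$ (note $1/\varepsilon$, not $\varepsilon$), so even on average a region boundary meets $\Theta(\varepsilon n^2)$ edges, but there is no per-region control; some regions can still have large boundaries.

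Second, and more fundamentally, the recursive-separator stage alone cannot yield (III) because you never use the hypothesis that $\mathcal{D}$ is locally optimal, and (III) is false without it: a locally non-optimal drawing can route a huge bundle of edges through an arbitrarily small neck, and then any two points across the neck are separated by $\Theta(n^2)$ edges even inside a region with few vertices and short boundary. The paper's actual proof of (III) requires a second, separate subdivision inside each fundamental region. It classifies edges as light or heavy using the crossing lemma, defines a combinatorial metric $d_{\mathcal{D}}$, and grows balls $B_{\mathcal{D}}(p, \varepsilon n^2/4)$ around a maximal set of interior-disjoint centers; a case analysis using local optimality and the light/heavy dichotomy shows that each such ball must either contain many crossings, encapsulate many light edges, or absorb a large chunk of the boundary or a whole separator cycle, bounding the number of centers by $O(1/\varepsilon^2)$. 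The final regions are a Voronoi-like partition for this metric. Your reduction of (III) to a bound on $|\partial R \cap \mathcal{D}|$ is also incomplete: a small boundary does not automatically let you connect a vertex to a point crossing each external arc at most once, since external arcs can wind back and forth inside $R$ many times without local optimality. This second stage is not a "careful bookkeeping" refinement of the separator recursion; it is a different mechanism, and it is where the hard work of the theorem lives.
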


\begin{proof}
    If $\varepsilon\leq n^{-1/2}$, the subdivision can be obtained by splitting the plane into $n+1$ regions, $n$ of which are very small and contain precisely one vertex each. From now on, we assume that $n^{-1/2}<\varepsilon<1$.
    
    We begin by subdividing the plane into $O(1/\varepsilon^2)$ closed, connected and interior disjoint regions which contain no more than $\lceil\varepsilon^2 n\rceil$ vertices of $G$ each, and whose boundaries, which do not contain any vertex of $G$, have a total of $O(n^2/\varepsilon)$ intersections with with the edges of $\mathcal D$. Here, by an intersection what we really mean is a connected component of the intersection between an edge and the boundary of one of the regions; an edge and the boundary of a region can induce multiple intersections, but the number of intersections between them is always an integer. This will be achieved through repeated applications of Theorem~\ref{teo:cycleseparator} and Lemma~\ref{teo:triangulation}. More precisely, we will build a sequence $R_0,R_1,\dots,R_t$ of regions as follows: 
    
    Let $R_0$ denote the entire plane. By adding a dummy vertex at each crossing point, $\mathcal{D}$ gives rise to an embedded planar graph. Using Lemma~\ref{teo:triangulation}, this planar graph can then be transformed into an embedded triangulation $T_0$. One can readily check that the original planar graph has at most $n^4/4$ vertices, so $T_0$ has at most $19n^4/4< 5n^4$ vertices, each of degree at most $3n$. Assign a weight of $1/n$ to every vertex of $G$ and a weight of $0$ to every other vertex and apply Theorem~\ref{teo:cycleseparator} to $T_0$, thus obtaining a cycle $C_0$ of length less than $\sqrt{5}Ln^2$ whose interior and exterior each contain at most $2n/3$ vertices of $G$. Every vertex of $G$ that belongs to $C_0$ can be assigned to either the interior or the exterior of the cycle so that the number of vertices in any of these two regions plus the number of vertices that have been assigned to it is still no more than $2n/3$. By modifying the triangulation and the cycle $C_0$ as depicted in Figure~\ref{fig:1}, we obtain a new cycle $C_0'$ with less than $\sqrt{5}Ln^4+n\cdot 3n$ nodes which contains no vertex of $G$. Denote the two closed regions induced by this curve as $R_1$ and $R_2$; these contain at most $2n/3$ vertices of $G$ each.

    For every $i$, we write $n_i$ to denote the number of vertices of $G$ that lie in $R_i$. Now, we iteratively subdivide each $R_i$ that contains $n_i>\lceil\varepsilon^2 n\rceil$ vertices of $G$ and has not yet been subdivided into smaller regions; the rest of this paragraph describes how to carry out this subdivision. First, the boundary of $R_i$ and the portion of $\mathcal D$ that is contained in this region are turned into an embedded planar graph by adding a dummy node at every intersection (either between two edges or between the boundary and an edge). Let $m_i$ denote the order of this graph. A face of this embedded planar graph will be called \textit{null} if its interior is contained in the complement of $R_i$. Observe that there is precisely one null face for each connected component of the complement of $R_i$. Once again, Lemma~\ref{teo:triangulation} allows us to transform this planar graph into an embedded triangulation with no more than $19m_i$ nodes, each of degree at most $3n$. Now, we modify the triangulation by deleting all nodes and edges added in the interior of each null face, and then re-triangulating each such face simply by picking one of the vertices on its boundary and connecting it to all others using curves drawn within the face. This is possible, since among the vertices on the boundary of any null face there is at least one (there are at least two, actually\footnote{In general, given an embedded planar graph this will true of every face whose boundary is a simple curve. This fact might look more familiar to the reader in its equivalent, ``inverted'' form: For any set of pairwise interior disjoint diagonals of a convex polygon, there are at least two nodes of the polygon which are not incident to any of the diagonals.}) which is adjacent to only two other vertices from the said boundary, and any such vertex can be used to draw the edges from. Let $T_i$ be the triangulation that is obtained in this manner. As mentioned above, the process we are about to discuss will ensure that no vertex of $G$ ever lies on the boundary of any of the $R_j$'s (in particular, this implies that no vertex of $G$ will ever lie on the boundary of a null face), so it is still true that the degree of every vertex of $G$ in $T_i$ is upper bounded by $3n$. If we assign weight $1/n_i$ to each of the vertices of $G$ in $R_i$ and weight $0$ to all other vertices of $T_i$, then Theorem~\ref{teo:cycleseparator} yields a cycle $C_i$ of length at most $L\sqrt{19m_i}$, where $m_i$ denotes the order of $T_i$. As we did for $C_0$, $T_i$ and $C_i$ can be modified slightly to obtain a cycle $C_i'$ with no more than $L\sqrt{19m_i}+3n_i$ nodes that does not go through any vertex of $G$ and such that the intersections of $R_i$ with both the interior and the exterior of the cycle each contain at most $2n_i/3$ vertices of $G$. From now on, we shall ignore the edges of $C_i'$ that are drawn on the interior of a null face, as well as all nodes of $C_i'$ which are incident to two ignored edges; note that there are at most two of these edges and one such node for each null face. One could try to use $C_i'$ in order to split $R_i$ into smaller parts, but this may cause the number of regions to blow up in the case that $C_i'$ intersects the boundary of $R_i$ at multiple places. Thus, we first modify $C_i'$ as shown in Figure~\ref{fig:2} for every contiguous sequence of nodes and edges that it shares with the boundary of $R_i$ and lies between two edges of $C_i'$ that are draw inside $R_i$. This results in a new $C_i''$ with at most $4(L\sqrt{19m_i}+3n_i)$ nodes, which is then used to split $R_i$ into two smaller connected regions with at most $2n_i/3$ vertices of $G$ each. 
    
\begin{figure}[!htbp]
\centering
\includegraphics[scale=0.55]{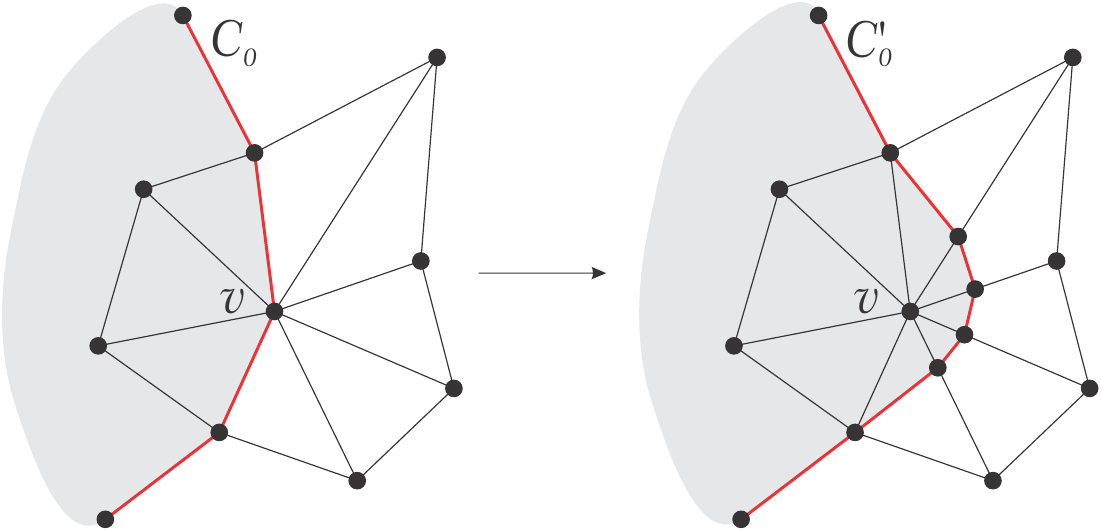}
\caption{A small portion of $C_0$ has been highlighted in red. Assuming that the vertex $v$ of $G$ has been assigned to the shaded region, we alter the triangulation and the cycle around a small neighborhood of $v$ so that this point belongs to the interior of the shaded region defined by $C_0'$. This increases the number of nodes of the cycle (and the whole graph) by less than the degree of $v$ in $T_0$.}
\label{fig:1}
\end{figure}

\begin{figure}[!htbp]
\centering
\includegraphics[scale=0.65]{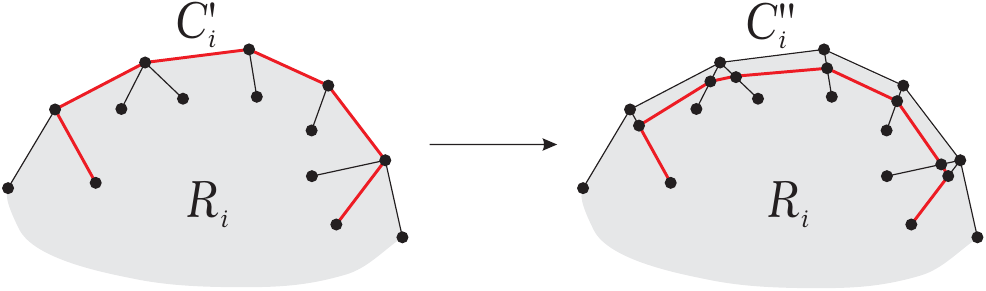}
\caption{The region $R_i$ is shaded. A section of $C_i'$ (red) is contained in the boundary of $R_i$, and the two adjacent arcs of $C_i'$ (also in red) lie inside $R_i$. For every vertex of $T_i'$ in this section, consider the edges of $T_i'$ that are incident to it, lie in $R_i$, and are contained in a curve that represents an edge of $G$ in $\mathcal{D}$. Note that, since no vertex of $G$ belongs to $C_i$, there are at most two edges of $G$ going through any of these vertices. Now, modify $C_i'$ as shown above. This increases the number of nodes in the curve by at most three times the number of nodes that conform the original section of $C_i$.}
\label{fig:2}
\end{figure}
    
    After enough steps, we reach a subdivision of the plane into closed, connected and interior disjoint regions, each of which contains at most $\lceil\varepsilon^2n\rceil$ vertices of $G$. These regions will be referred to as \textit{fundamental regions} from now on. An important feature of above procedure that will prove useful later on is that each connected component of the boundary of a region $R_j$ contains one of the $C_i''$'s in its entirety (possibly, but not necessarily, $C_j''$ itself). The number of fundamental regions is clearly $O(1/\varepsilon^2)$, but we also require an upper bound on the total number of intersections between their boundaries and the edges of $G$ in the drawing. A quick inspection of the subdivision process reveals that this quantity is at most twice the sum of the numbers of nodes in $C_0'$ and in each of the $C_i''$'s. The counting argument that is used below to bound this quantity is similar to the one from the proof of Corollary 5 in~\cite{untangling}. 
    
    The \textit{level} of a region $R_i$, which we denote by $\ell(R_i)$, is defined as follows: If $R_i$ is fundamental, then its level is $0$. Otherwise, its level is obtained by adding $1$ to the largest of the levels of the two regions that were created when splitting $R_i$ using $C_i''$. The key observation here is that any two $R_i$'s with the same level are interior disjoint. This implies that the orders $m_i$ of the $T_i$'s such that $\ell(R_i)=l$ for a fixed $l$ add up to no more than \[O\left(n^4+|C'_0|+\sum_{i>0\text{ with } \ell(R_i)>l} |C''_i|\right).\] The largest level (which is $\ell(R_0)$) is clearly no more than $O(\log n)$, so one can check inductively that the total complexity of the $C_i''$'s with level $l$ is by $o(n^3)$, so the above expression is $O(n^4)$ . Also, every $R_i$ with level $l\geq1$ contains at least $\lceil\varepsilon^2 n\rceil(3/2)^{l-1}$ vertices of $G$, so there are at most $\varepsilon^{-2}(2/3)^{l-1}$ such regions. Now, the Cauchy-Schwarz inequality yields that the total complexity of the $C_i''$'s corresponding to regions in level $l$ is bounded from above by \[\sum_{i\text{ with } \ell(R_i)=l}4(L\sqrt{19m_i}+3n_i)\leq L'\frac{n^2}{\varepsilon}\left(\frac{2}{3}\right)^{\frac{l-1}{2}}+12n,\] where $L'$ is an absolute constant. Adding over all levels, we arrive at an upper bound of \[12n\cdot \ell(R_0)+ L'\frac{n^2}{\varepsilon}\sum_{l=1}^{\ell(R_0)}\sqrt{\frac{2}{3}}^{(l-1)}=O(n^2/\varepsilon),\] where we have used the fact that $\ell(R_0)=O(\log\varepsilon^{-1})=o(n)$. Note that the bound on the number of regions with level $l$ also yields that there are $O(1/\varepsilon^2)$ regions in total, not just at level $0$. Indeed, there are at most \[\sum_{l=1}^{\ell(R_0)}\varepsilon^{-2}(2/3)^{l-1}=O(1/\epsilon^2)\] regions with level $l\geq 1$. This observation will be used later.
    
    Next, we show how the fundamental regions can be further subdivided into smaller parts that satisfy the requirements in the statement of the theorem. Let us start by classifying the edges of $G$ as \textit{light} or \textit{heavy} depending on whether or not they are involved in less than $\varepsilon n^2/16$ crossings, respectively. By applying the crossing lemma to the subgraph that contains only the light edges, we get that the number of such edges is $O(\varepsilon^{1/2}n^2)$. A simple curve whose relative interior does not pass through any vertex or crossing point of $\mathcal{D}$ will be called \textit{clean}. Now, for any two points $p$ and $q$ on the plane, we define the distance $d_\mathcal{D}(p,q)$ as follows: 
    
    If $p$ and $q$ belong to the same fundamental region and at least one of them does not lie on its boundary, then $d_\mathcal{D}$ is the least positive integer $k$ such that there exists a clean curve which has endpoints $p$ and $q$, is contained in that region, and whose interior has at most $k$ intersection points with the edges of the drawing. Otherwise, set $d_\mathcal{D}(p,q)=\infty$. 
    
    For any point $p$ on the plane and any positive number $k$, let $B_{\mathcal{D}}(p,k)$ be the set that consists of those points $q$ such that $d_\mathcal{D}(p,q)\leq k$, and note that $B_{\mathcal{D}}(p,k)$ is closed and connected. We say that an edge of $G$ is \textit{encapsulated} by $B_{\mathcal{D}}(p,k)$ if both of its endpoints and all the points where it crosses another edge are contained in $B_{\mathcal{D}}(p,k)$, and at least one of these points belongs to the interior of this region.
    
    \begin{claim}
       
     For every point $p$ that does not belong to the boundary of a fundamental region, if $B_{\mathcal{D}}(p,\varepsilon n^2/4)$ is not the whole plane then it satisfies at least one of the properties below.
    
    \begin{enumerate}
        \item It contains at least $\varepsilon n^2/16$ intersections between the boundary of the fundamental region that contains $p$ and the edges of the drawing. 

        \item It contains one of the $C_i''$'s in its entirety.

        \item At least $\varepsilon^2n^4/32^2$ crossing points of $\mathcal{D}$ lie in its interior.

        \item It encapsulates at least $\varepsilon n^2/32$ light edges.
    \end{enumerate}

    \end{claim}

\begin{proof}
    
    First, we deal with the case where $B_{\mathcal{D}}(p,3\varepsilon n^2/16)$ contains a point from the boundary of the fundamental region $R_t$ that contains $p$. By starting from this point and walking along the corresponding connected component of the boundary of $R_t$ in both directions, we will either encounter at least $\varepsilon n^2/16$ intersections between the edges and the boundary of the region, all of which lie in $B_{\mathcal{D}}(p,\varepsilon n^2/4)$ (the fact that no vertex of $G$ is contained in the boundary of a fundamental region is important here), or will go around the entirety of one of the $C_i''$'s. Hence, at least one of the first two properties holds. From now on, we assume that $B_{\mathcal{D}}(p,3\varepsilon n^2/16)$ does not contain a point from the boundary of the corresponding fundamental region.
    
    Suppose now that $B_{\mathcal{D}}(p,\varepsilon n^2/16)$ contains a point from a heavy edge $e$. Then, we can find at least $\varepsilon n^2/16$ crossing points of $\mathcal{D}$ that involve $e$ and are contained in $B_{\mathcal{D}}(p,\varepsilon n^2/8)$. Apart from $e$, the edges involved in these crossings are all distinct due to the fact that $\mathcal{D}$ is simple. If any of these edges is light, then it must be encapsulated by $B_{\mathcal{D}}(p,3\varepsilon n^2/16)$, so we can assume that at least $\varepsilon n^2/32$ of them are heavy. Each of these heavy edges takes part in at least $\varepsilon n^2/16$ crossing that lie in $B_{\mathcal{D}}(p,3\varepsilon n^2/16)$. Summing over all of these edges and dividing by two to account for the fact that each crossing might be counted up to two times, we get that at least $(\varepsilon n^2/32)(\varepsilon n^2/16)/2$ crossing points of $\mathcal{D}$ belong to the interior of $B_{\mathcal{D}}(p,\varepsilon n^2/4)$.
    
    It remains to tackle the case where the interior of $B_{\mathcal{D}}(p,\varepsilon n^2/16)$ is disjoint from all the heavy edges. Given a clean curve $C$, let $c_1,c_2,\dots c_t$ denote the intersection points of its interior with the edges of $G$, and let $e_i$ be the edge that contains $c_i$; we define $w_G(C)$ as $\sum_{i=1}^t w_G(e_i)$. Consider a clean curve $C_p$ that connects $p$ to a point on the boundary of $B_{\mathcal{D}}(p,\varepsilon n^2/16)$ which is neither a crossing nor a vertex of $G$, and such that $w(C_p)$ is as small as possible amongst all curves of this kind. Furthermore, suppose that there is no curve with the aforementioned properties which has less intersection points with the edges of $G$ than $C_p$ does. Each of the at least $\varepsilon n^2/16$ points where the interior of $C_p$ intersects an edge must belong to a light edge. Since $\mathcal{D}$ is locally optimal, no two of these points belong to the same edge, as otherwise we would be able to construct a curve that contradicts the choice of $C_p$ by rerouting a portion of $C_p$ along such an edge. Hence, there are at least $\varepsilon n^2/16$ light edges that intersect the interior of $B_{\mathcal{D}}(p,\varepsilon n^2/16)$ and are thus encapsulated by $B_{\mathcal{D}}(p,\varepsilon n^2/4)$.
    \end{proof}

    We go back to the proof of Theorem~\ref{teo:regions}. The desired subdivision can now be obtained by means of a standard covering argument. Indeed, let $P=\{p_1,p_2,\dots,p_k\}$ be a set of points on the plane such that none of its elements represents a vertex or lies on an edge or on the boundary of a fundamental region, and which is maximal with the property that $B_{\mathcal{D}}(p_i,\varepsilon n^2/4)$ and $B_{\mathcal{D}}(p_j,\varepsilon n^2/4)$ are interior disjoint whenever $i\neq j$. Given that the total complexity of the boundaries of the fundamental regions is $O(n^2/\varepsilon)$, the number of $B_{\mathcal{D}}(p_i,\varepsilon n^2/4)$'s that satisfy property $1$ is $O(1/\varepsilon^2)$. Since the total number of $C_i''$'s is $O(1/\varepsilon^2)$ and there can be at most two regions which contain a specific $C_i''$ in its entirety, the same is true for those that satisfy property $2$. Because the $B_{\mathcal{D}}(p_i,\varepsilon n^2/4)$'s are interior disjoint, every crossing point of $\mathcal{D}$ belongs to the interior of at most one of them, and no edge can be encapsulated by more than one of them. Since $\mathcal{D}$ is simple, the number of crossing points is $O(n^4)$, so at most $O(1/\varepsilon^2)$ of the $B_{\mathcal{D}}(p_i,\varepsilon n^2/4)$'s satisfy property 3. Lastly, as there are no more than $O(\varepsilon^{1/2}n^2)$ light edges, the amount of $B_{\mathcal{D}}(p_i,\varepsilon n^2/4)$'s that encapsulate at least at $\varepsilon n^2/32$ light edges is upper bounded by $O(1/\varepsilon^2)$. This shows that $k=O(1/\varepsilon^2)$. 
    
    Next, notice that for every point $q$ we can find a $p_i$ such that such that $d_\mathcal{D}(p_i,q)\leq\varepsilon n^2/2$, or else $q$ could be added to $P$, contradicting its maximality. For each $q$, let $P(q)\subseteq P$ denote the set of $p_i$'s that minimize $d_\mathcal{D}(p_i,q)$, and let $p(q)$ be the element of $P(q)$ with the smallest subscript. Define $r_i$ as the closure of $\{q\ |\ p(q)=p_i\}$ for every $i\in\{1,\dots,k\}$. The $r_i$'s form a subdivision of the plane, and they are closed, connected and interior disjoint. Moreover, if $q\in r_i$ and $C_q$ is a clean curve that connects $q$ to $p_i$ such that its interior has precisely $d_\mathcal{D}(p_i,q)$ intersection points with the edges of $G$, then this curve lies completely within $r_i$. It follows that any two points in $r_i$ can be connected by a clean curve that goes through $p_i$ and whose interior crosses at most $\varepsilon n^2$ edges. Lastly, the $r_i$'s can once again be modified using a process similar to the one depicted in Figure~\ref{fig:1} so as to ensure that no vertex of $G$ lies on the boundary of any of them (this final step is where specific wording of property \textit{(III)} comes into play). The resulting regions satisfy properties \textit{(I), (II) and (III)}.

\end{proof}

\subsection{Cut distance and crossing numbers}

The proof of Theorem~\ref{teo:FoxPachSuk} by Fox, Pach and Suk relies on a regularity lemma for semi-algebraic graphs, also by the same authors~\cite{fox2014density}. In some sense, Theorem~\ref{teo:regions} will act as a purely combinatorial substitute of the said lemma (although this analogy should not be pushed too far). We will now use this theorem to show that any two graphs which are close with respect to $d_\square$ have similar crossing numbers.

\begin{theorem}\label{teo:closegraphs}
    Let $G_1$ and $G_2$ be edge-weighted $n$-vertex graphs on the same vertex set $V$ and write $d=d_\square(G_1,G_2)$. If $d\geq n^{-4}$, then \[|\mathop{\mathrm{cr}}(G_1)-\mathop{\mathrm{cr}}(G_2)|\leq Md^{1/4}n^4,\] where $M$ is an absolute constant. Moreover, $|\mathop{\mathrm{cr}}(G_1)-\mathop{\mathrm{cr}}(G_2)|\leq d^2n^8$ holds unconditionally.
\end{theorem}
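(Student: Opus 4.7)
Plan: The plan is to prove $|\mathop{\mathrm{cr}}(G_1)-\mathop{\mathrm{cr}}(G_2)|\le Md^{1/4}n^4$ by constructing, from an optimal simple drawing of $G_1$, a drawing of $G_2$ whose weighted crossing count exceeds $\mathop{\mathrm{cr}}(G_1)$ by only $O(d^{1/4}n^4)$; the symmetric role of $G_1$ and $G_2$ in $d_\square$ then yields the two-sided inequality. I fix a simple and locally optimal drawing $\mathcal{D}$ of $G_1$ attaining $\mathop{\mathrm{cr}}(G_1)$, set $\varepsilon = c\,d^{1/4}$ for a small absolute constant $c>0$ (so $\varepsilon \ge n^{-1}$ under the hypothesis $d\ge n^{-4}$), and apply Theorem~\ref{teo:regions} to $(\mathcal{D},\varepsilon)$. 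This produces regions $r_1,\dots,r_k$ with $k=O(\varepsilon^{-2})$ and an induced partition $V_i=V\cap r_i$ of the vertex set with $|V_i|\le\lceil\varepsilon^2 n\rceil$; I pick an interior point $p_i\in r_i$, and by property~(III) a clean curve $\pi_v$ from each $v\in V_i$ to $p_i$ inside $r_i$ whose interior meets at most $\varepsilon n^2$ curves of $G_1$-edges with no endpoint in $V_i$.

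The next step is to build a drawing $\mathcal{D}'$ of $G_2$ on the same vertex coordinates. For each ordered pair $(i,j)$ with $e_{G_1}(V_i,V_j)>0$, I sample a representative edge $e_{ij}\in E(G_1)\cap E(V_i,V_j)$ independently with probability proportional to $w_{G_1}(e_{ij})$, and declare the concatenation of $\pi_{u(e_{ij})}$, the curve of $e_{ij}$ in $\mathcal{D}$, and $\pi_{v(e_{ij})}$ to be the ``highway'' $\gamma_{ij}$. Every $G_2$-edge $(u,v)$ with $u\in V_i$, $v\in V_j$, $i\ne j$, is drawn as $\pi_u\cdot\gamma_{ij}\cdot\pi_v$; within-region edges are drawn locally inside their region; pairs with $e_{G_1}(V_i,V_j)=0$ are routed along a short default path between $p_i$ and $p_j$, whose total $G_2$-weight is $O(dn^2)$ by applying the cut-norm bound to the union of the corresponding vertex classes. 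I split $\mathop{\mathrm{cr}}(G_2,\mathcal{D}')$ into three groups: (a) highway--highway crossings, (b) crossings involving a $\pi_v$-piece or a default-path segment, and (c) crossings involving a within-region edge. Group (c) contributes $O(\varepsilon^6 n^4)$ from $\sum_i|V_i|^4\le n(\varepsilon^2n)^3$; group (b) is bounded by combining the $\varepsilon n^2$-per-$\pi_v$ guarantee with the small per-bundle weight $W_{ij}\le|V_i||V_j|=O(\varepsilon^4 n^2)$, giving a total contribution of $O(\varepsilon n^4)$.

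The heart of the argument is group (a). Taking expectations over the random representatives, the contribution from an unordered pair $\{(i,j),(i',j')\}$ with nonempty $G_1$-bundles equals
\[\frac{W_{ij}W_{i'j'}}{W'_{ij}W'_{i'j'}}\sum_{\substack{e\in E_1(V_i,V_j)\\ f\in E_1(V_{i'},V_{j'})}}w_{G_1}(e)\,w_{G_1}(f)\,\chi(e,f),\]
where $W_{ij}=e_{G_2}(V_i,V_j)$, $W'_{ij}=e_{G_1}(V_i,V_j)$, and $\chi(e,f)\in\{0,1\}$ indicates that $e$ and $f$ cross in $\mathcal{D}$. Writing this as the corresponding summand $C_{ij,i'j'}$ of $\mathop{\mathrm{cr}}(G_1,\mathcal{D})$ plus an excess $(W_{ij}W_{i'j'}-W'_{ij}W'_{i'j'})\cdot C_{ij,i'j'}/(W'_{ij}W'_{i'j'})$, using $C_{ij,i'j'}\le W'_{ij}W'_{i'j'}$ together with the identity
\[W_{ij}W_{i'j'}-W'_{ij}W'_{i'j'}=W_{ij}(W_{i'j'}-W'_{i'j'})+(W_{ij}-W'_{ij})W'_{i'j'},\]
the total excess telescopes into terms of the form $(\sum_{ij}W_{ij})(\sum_{i'j'}(W_{i'j'}-W'_{i'j'}))$, each at most $n^2\cdot dn^2=dn^4$ via the cut-norm bound with $S=T=V$. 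Hence the expected crossing count of $\mathcal{D}'$ is $\mathop{\mathrm{cr}}(G_1)+O(d^{1/4}n^4)$, and any realization of the random choices attaining the expectation delivers the required drawing of $G_2$.

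The main obstacle is the telescoping step in group (a): individual ratios $W_{ij}/W'_{ij}$ are not uniformly controlled and one must argue the signed cancellation rigorously, including the degenerate case $W'_{ij}=0$. The unconditional bound $|\mathop{\mathrm{cr}}(G_1)-\mathop{\mathrm{cr}}(G_2)|\le d^2n^8$, which is only informative when $d$ is much smaller than $n^{-2}$, I would derive separately by expanding $\mathop{\mathrm{cr}}(G_1,\mathcal{D})-\mathop{\mathrm{cr}}(G_2,\mathcal{D})$ on a common simple drawing $\mathcal{D}$ as a quadratic form in $w_{G_2}-w_{G_1}$ and estimating it via Cauchy--Schwarz together with the cut-norm hypothesis and a trivial $\ell^2$-bound on the edge-weight differences.
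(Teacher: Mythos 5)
Your overall strategy — fix an optimal simple locally optimal drawing of $G_1$, cut it up with Theorem~\ref{teo:regions}, re-route the $G_2$-edges through the resulting cell complex, and take an expectation over random ``representatives'' drawn from $G_1$'s edge weights — is the same as the paper's. The one structural difference (one shared highway per pair $(V_i,V_j)$, rather than an independent random representative for each individual $G_2$-edge) is fine: the expected type-(a) contribution works out to the same per-tuple quantity $W_{ij}W_{i'j'}C_{ij,i'j'}/(W'_{ij}W'_{i'j'})$ either way.

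The gap is exactly where you flag it, and it is fatal to the argument as written, not a loose end. You bound the per-tuple excess by $W_{ij}W_{i'j'}-W'_{ij}W'_{i'j'}$ using $C_{ij,i'j'}/(W'_{ij}W'_{i'j'})\le 1$, and then sum. But this inequality has the wrong direction whenever $W_{ij}W_{i'j'}-W'_{ij}W'_{i'j'}<0$: in that case the excess equals a negative number times a tuple-dependent factor $r\in[0,1]$, so the excess lies in $[W_{ij}W_{i'j'}-W'_{ij}W'_{i'j'},\,0]$ — it is \emph{at least}, not at most, $W_{ij}W_{i'j'}-W'_{ij}W'_{i'j'}$. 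One can easily cook up two tuples where your termwise bound fails and the sum of excesses exceeds the claimed telescoped sum. The correct statement is $\text{excess}\le\max\bigl(0,\,W_{ij}W_{i'j'}-W'_{ij}W'_{i'j'}\bigr)$, and the positive-part operator destroys the factorization $\sum_{ij,i'j'}(W_{ij}W_{i'j'}-W'_{ij}W'_{i'j'})=\bigl(\sum_{ij}W_{ij}\bigr)^2-\bigl(\sum_{ij}W'_{ij}\bigr)^2$ that your ``$S=T=V$'' cut-norm application relies on. Bounding $\sum_{ij}\max(0,W_{ij}-W'_{ij})$ genuinely requires more than the single cut $S=T=V$: you would have to apply the cut-norm bound once per row, say $T_i=\bigcup\{V_j : W_{ij}>W'_{ij}\}$, incurring a factor of $k$ (manageable, but a different argument).

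Related, you only divert pairs with $e_{G_1}(V_i,V_j)=0$ onto default paths, whereas the paper sets aside every pair with $e_{G_1}(V_i,V_j)\le dn^2$ (its ``lonely'' edges). This is not cosmetic: the paper's term-by-term expansion uses $C_{ij,i'j'}\le W'_{ij}W'_{i'j'}$ together with $W_{ij}\le W'_{ij}+dn^2$ to get a multiplicative ratio bounded by $2$, and that ratio bound fails precisely for tiny-but-nonzero $W'_{ij}$. The paper routes those bundles separately (fewer than $C^2 d^{1/2}n^2$ edges in total) and reinserts them at the end at a cost of $O(d^{1/2}n^4)$. Your proposal has no analogue of this step. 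Finally, your group (c) bound $\sum_i|V_i|^4=O(\varepsilon^6 n^4)$ only counts crossings between two within-region edges; crossings between a within-region edge and a transiting highway (or a $\pi$-piece from another bundle) are dropped between groups (b) and (c) — the paper covers these under property~(III) of Theorem~\ref{teo:regions} via its ``type 3'' crossings. The paper also works with the exponent $\varepsilon=d^{1/8}$ and obtains the $Md^{1/4}n^4$ bound from the bad-crossing term $3k\varepsilon^4 n^4$; if you insist on $\varepsilon=cd^{1/4}$ you must re-verify that every term remains $O(d^{1/4}n^4)$ under the corrected bookkeeping.
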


We should that the content of this theorem lies in the first inequality.

\begin{proof}

    Suppose that $d\geq n^{-4}$. By adding edges of weight $0$, we can assume that every two elements of $V$ are adjacent in $G_1$. Consider a simple and locally optimal drawing $\mathcal{D}$ of $G_1$ and apply Theorem~\ref{teo:regions} with $\varepsilon=d^{1/8}$ to obtain a subdivision of the plane. Let $r_1,r_2,\dots,r_k$ be the regions that contain at least one vertex and, for each $i$, let $V_i\subset V$ denote the set of vertices that are contained in $r_i$. We have that $k\leq C /\varepsilon^2=C/d^{1/4}$ for some absolute constant $C$. Since $\varepsilon\geq n^{-1/2}$, each of the $V_i$'s has at most $\lceil\varepsilon^2 n\rceil<2\varepsilon^2 n$ elements. We say that an edge is \textit{long} if its endpoints belong to different $V_i$'s, and that it is \textit{short} otherwise.
    
    We shall use $\mathcal{D}$ as a blueprint for constructing a drawing $\mathcal{D}'$ of $G_2$. The elements of $V$ will be represented by the same points as in $\mathcal{D}$. Next, for every long edge $(u,v)\in E_{G_2}(V_i,V_j)$, choose a random edge $(u',v')\in E_{G_1}(V_i,V_j)$, where $(x,y)$ is chosen with probability $w_{G_1}(x,y)/e_{G_1}(V_i,V_j)$ (if $e_{G_1}(V_i,V_j)=0$, then $(u',v')$ is chosen uniformly at random from $E_{G_1}(V_i,V_j)$). These selections are carried out independently from each other for every long edge of $G_2$. Consider the points where the curve that represents $(u',v')$ in $\mathcal{D}$ crosses the union of the boundaries of $r_i$ and $r_j$ (if a section of the curve is contained in the union of the boundaries, we take only its endpoints) and label them as $p_1,p_2,\dots,p_t$ in the order that they appear as this curve is traversed from $u$ to $v$. If some $p_i$ lies on the intersection of the boundaries of $r_i$ and $r_j$, then we write $a(u,v)=b(u,v)=p_i$. Otherwise, we can choose two points $a(u,v)=p_i$ and $b(u,v)=p_{i\pm 1}$ such that $a(u,v)$ lies on the boundary of $r_i$ and $b(u,v)$ lies on the boundary of $r_j$. The curve that represents $(u,v)$ will be composed of three sections: one which goes from $u$ to $a(u,v)$, one that goes from $a(u,v)$ to $b(u,v)$ by following along the curve that represents $(u',v')$ in $\mathcal{D}$ (which is a single point in the case that $a(u,v)=b(u,v)$), and one last section from $b(u,v)$ to $v$. The first section will be constructed so that it is fully contained in $r_i$ and it has as few crossings as possible with the edges of $G_1$ that have no endpoint in $V_i$. Analogously, we will draw the last section so that it lies within $r_j$ and has the least possible number of crossings with the edges of $G_1$ that have no endpoint in $V_j$. The first and last sections of every curve that represents a long edge of $G_2$ will be called \textit{extremal}. See Figure~\ref{fig:3}. Each short edge with both endpoints in some $V_i$ will be represented by a curve that is contained in $r_i$ and has the least possible number of crossings with the edges of $G_1$ that have no endpoint in $V_i$. 

    \begin{figure}[!htbp]
\centering
\includegraphics[scale=0.7]{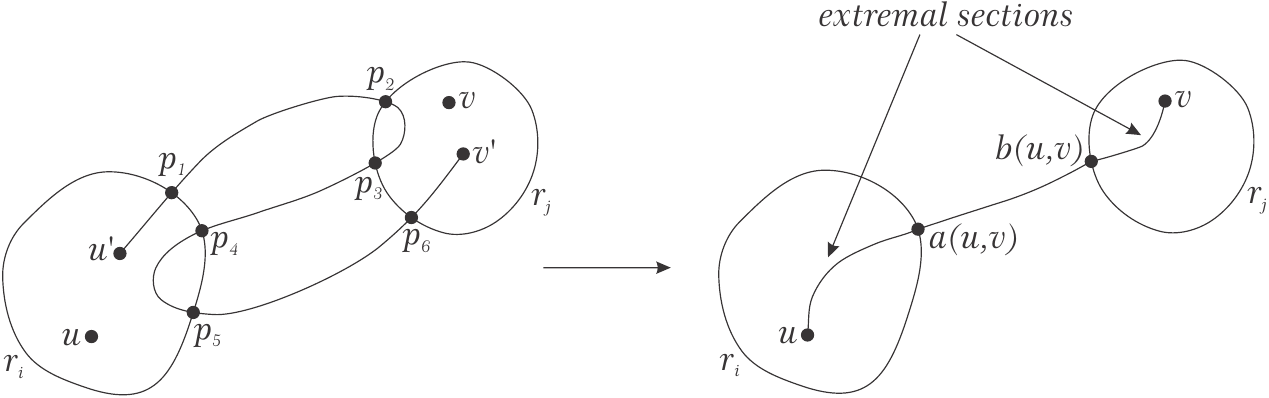}
\caption{On the right, we have the curve that represents $(u',v')$ in $\mathcal{D}$. Setting $a(u,v)=p_4$ and $b(u,v)=p_3$ and proceeding as described in the proof, we arrive at a curve that connects $u$ and $v$. Note that we could have also chosen $a(u,v)=p_1$ and $b(u,v)=p_2$, or $a(u,v)=p_5$ and $b(u,v)=p_6$.}
\label{fig:3}
\end{figure}
    
    Let $E_i$ denote the set of curves which consists of the short edges of $G_2$ with endpoints in $V_i$, and the extremal sections with an endpoint in $V_i$. By property \textit{(III)} in the statement of Theorem~\ref{teo:regions}, the above construction can be carried out so that each element of $E_i$ has at most $\varepsilon n^2$ crossings with the edges of $G_1$ that have no endpoint in $V_i$, and such that any two elements of $E_i$ have a finite number of points in common. There might still be some pairs of elements of $E_i$ which cross more than once, but these multiple crossings can be eliminated by rerouting some of the curves without increasing the total number of crossing between the elements of $E_i$ and the edges of $G_1$ with no endpoint in $V_i$. Finally, observe that some pairs of long edges of $E_{G_2}(V_i,V_j)$ may have infinitely many points in common if their non-extremal sections coincide. Thus, in order to obtain an actual graph drawing, we might have to perturb these edges slightly, as shown in Figure~\ref{fig:4}. We point out that the resulting drawing $\mathcal D'$ is not necessarily simple. 

\begin{figure}[!htbp]
\centering
\includegraphics[scale=0.7]{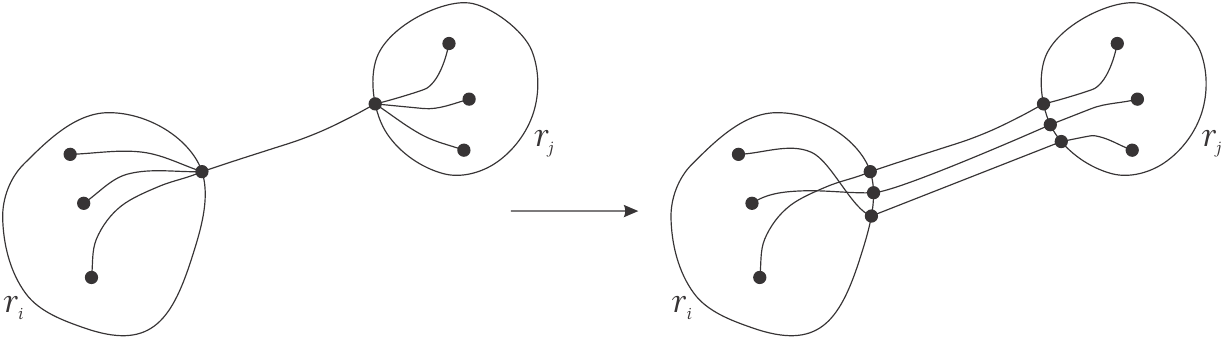}
\caption{Edges that share entire non-extremal sections can be modified as shown above. It is important that these modifications be carried out so that any two extremal sections cross at most once and the number of crossings between the extremal sections in $r_i$ and the edges of $G_1$ that do not have an endpoint in $r_i$ does not increase (the same goes for $r_j$).}
\label{fig:4}
\end{figure}
    
    The goal now is to bound the expected value of $\mathop{\mathrm{cr}}(G_2,\mathcal{D}')$. A long edge of $G_2$ will be called \textit{lonely} if it belongs to $E_{G_2}(V_i,V_j)$ and $e_{G_1}(V_i,V_j)\leq dn^2$. Note that $G_2$ has no more than $dk^2n^2\leq C^2d^{1/2}n^2$ lonely edges. Delete all the lonely edges from $\mathcal{D}'$; we will later describe how they can be reinserted without creating too many crossings. 
    There are four types of crossings in $\mathcal{D}'$: the ones that involve two non-extremal sections (\textit{type} $1$), those which involve a non-extremal section and the interior of an extremal one (\textit{type} $2$), those that involve a short edge and a non-extremal section (\textit{type} $3$), and finally the ones that involve only short edges and extremal sections (\textit{type} 4). 
    
    Every crossing of type $1$ in $\mathcal{D}'$ can be traced back to a crossing in $\mathcal{D}$. If we think about it in this way, then a crossing in $\mathcal{D}$ between an edge $(u,v)\in E_{G_1}(V_i,V_j)$ and an edge $(u',v')\in E_{G_1}(V_s,V_t)$ contributes at most \[\frac{w_{G_1}(u,v)\cdot w_{G_1}(u',v')\cdot e_{G_2}(V_i,V_j)\cdot e_{G_2}(V_s,V_t)}{e_{G_1}(V_i,V_j)\cdot e_{G_1}(V_s,V_t)}\] to $\mathbb{E}[\mathop{\mathrm{cr}}(G_2,\mathcal{D}')]$. Let $c_\mathcal{D}(i,j|s,t)$ denote the total weighted sum of the crossings between an edge of $E_{G_1}(V_i,V_j)$ and an edge from $E_{G_1}(V_s,V_t)$ in $\mathcal{D}$. Then, the total contribution of these crossings to $\mathbb{E}[\mathop{\mathrm{cr}}(G_2,\mathcal{D}')]$ is upper bounded by \[c_\mathcal{D}(i,j|s,t)\frac{e_{G_2}(V_i,V_j)\cdot e_{G_2}(V_s,V_t)}{e_{G_1}(V_i,V_j)\cdot e_{G_1}(V_s,V_t)}\leq c_\mathcal{D}(i,j|s,t)\frac{(e_{G_1}(V_i,V_j)+d n^2)(e_{G_1}(V_s,V_t)+d n^2)}{e_{G_1}(V_i,V_j)\cdot e_{G_1}(V_s,V_t)}.\] Adding over all $4$-tuples $(i,j,s,t)$ of numbers in $\{1,\dots,k\}$ (two tuples are considered the same if the correspond to the same pairs of parts of $\mathcal{P}$), we get that the expected weighted sum of the crossings of type 1 in $\mathcal{D}'$ does not exceed \[\sum_{(i,j,s,t)}c_\mathcal{D}(i,j|s,t)\left(1+\frac{e_{G_1}(V_i,V_j)d n^2+e_{G_1}(V_s,V_t)d n^2+d^2n^4}{e_{G_1}(V_i,V_j)\cdot e_{G_1}(V_s,V_t)}\right)\] \[\leq\sum_{(i,j,s,t)}c_\mathcal{D}(i,j|s,t)+e_{G_1}(V_i,V_j)d n^2+e_{G_1}(V_s,V_t)d n^2+d^2n^4\] \[\leq\mathop{\mathrm{cr}}(G_1,\mathcal{D})+k^4(\varepsilon^2 n)^2 d n^2+k^4d^2n^4\leq\mathop{\mathrm{cr}}(G_1,\mathcal{D})+C^4d^{1/2}n^4+C^4d^2n^4\leq\mathop{\mathrm{cr}}(G_1,\mathcal{D})+2C^4d^{1/2}n^4.\]

    Next, we bound the amount of crossings of types $2$ and $3$ that are contained in $r_i$. Note that each of them can be traced back to an intersection point between a long edge of $G_1$ that has no endpoint in $V_i$ and a either a short edge of $G_2$ with endpoints in $V_i$ or an extremal section with endpoint in $V_i$ (in other words, an intersection point between an edge of $G_1$ with no endpoint in $V_i$ and an element of $E_i$). By construction, and thanks to property \textit{(III)} in the statement of Theorem~\ref{teo:regions}, we have  that on average each of the at most $\lceil\varepsilon^2 n\rceil n$ elements of $E_i$ takes part in no more than $\varepsilon n^2$ crossings with the long edges of $G_1$ that have no endpoint in $V_i$. On top of this, if the long edge of $G_1$ involved in one of these crossings belongs to $E_{G_1}(V_s,V_t)$, then the crossing contributes at most \[\frac{e_{G_2}(V_s,V_t)}{e_{G_1}(V_s,V_t)}\leq\frac{e_{G_1}(V_s,V_t)+dn^2}{e_{G_1}(V_s,V_t)}\] to $\mathbb{E}[\mathop{\mathrm{cr}}(G_2,\mathcal{D}')]$. Since the lonely edges were deleted, we may assume that $e_{G_1}(V_s,V_t)> dn^2$, and thus the above quantity is less than $2$. Whence, the expected value of the weighted sum of the crossings of types $2$ and $3$ is upper bounded by $2\lceil\varepsilon^2n\rceil n\cdot\varepsilon n^2<4\varepsilon^3n^4=4d^{3/8}n^4$.

    Lastly, as any two short edges or extremal sections that are fully contained in $r_i$ induce at most $1$ crossing, the number of crossings of type $4$ does not surpass \[k\binom{\lceil\varepsilon^2n\rceil}{2}\binom{n}{2}\leq k\varepsilon^4n^4\leq Cd^{1/4}n^4.\]

    Now, we add the at most $C^2d^{1/2}n^2$ lonely edges back into the drawing. In order to do this, we first transform $\mathcal{D}'$ into a simple and locally optimal drawing by applying a sequence of operations that does not increase the weighted sum of the crossings. At this point, each lonely edge can be represented by a curve that does not cross any non-lonely edge more than once. We can get rid of multiple crossings between the lonely edges by rerouting some of the curves without increasing the number of crossings between lonely and non-lonely edges. At the end, we get a drawing $\mathcal{D}''$ of $G_2$ that satisfies \[\mathbb{E}[\mathop{\mathrm{cr}}(G_2,\mathcal{D}'']\leq\mathbb{E}[\mathop{\mathrm{cr}}(G_2,\mathcal{D}')]+C^2d^{1/2}n^2\binom{n}{2}\leq\mathbb{E}[\mathop{\mathrm{cr}}(G_2,\mathcal{D}')]+C^2d^{1/2}n^4\] \[\leq\mathop{\mathrm{cr}}(G_1,\mathcal{D})+2C^4d^{1/2}n^4+4d^{3/8}n^4+Cd^{1/4}n^4+C^2d^{1/2}n^4\] \[\leq\mathop{\mathrm{cr}}(G_1,\mathcal{D})+Md^{1/4}n^4,\] where $M$ is an absolute constant. As this can be repeated for every drawing $\mathcal{D}$ of $G_1$, we arrive at $\mathop{\mathrm{cr}}(G_2)\leq\mathop{\mathrm{cr}}(G_1)+Md^{1/4}n^4$. Analogously, $\mathop{\mathrm{cr}}(G_2)\leq\mathop{\mathrm{cr}}(G_1)+Md^{1/4}n^4$. This proves the first part of the statement. For the second part, a direct computation yields $\mathop{\mathrm{cr}}(G_2,\mathcal{D})\leq\mathop{\mathrm{cr}}(G_1,\mathcal{D})+d^2n^8$ (that is, we do not need to modify the drawing of $G_1$ at all to get the desired inequality), and we can finish as above .
\end{proof}

Next, we address how taking the blow-up of a graph affects the crossing number.

\begin{theorem}\label{teo:blowup}
For any edge-weighted $n$-vertex graph $G=(V,E)$ and any positive integer $m$, we have that \[0\leq\mathop{\mathrm{cr}}(G[m])-m^4\mathop{\mathrm{cr}}(G)\leq n^3m^4.\]
\end{theorem}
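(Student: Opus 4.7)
The plan is to establish the two inequalities separately through natural passages between drawings of $G$ and drawings of its blow-up $G[m]$. Throughout I would use that $w_{G[m]}(u_i, v_j) = w_G(u,v)$ whenever $u \neq v$, and that all weights lie in $[0,1]$.

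For the lower bound $m^4 \mathop{\mathrm{cr}}(G) \leq \mathop{\mathrm{cr}}(G[m])$ I would argue by a short averaging. Fix a simple drawing $\mathcal{D}'$ of $G[m]$ with $\mathop{\mathrm{cr}}(G[m], \mathcal{D}') = \mathop{\mathrm{cr}}(G[m])$, and for each $v \in V(G)$ pick $\phi(v) \in U_v$ uniformly at random and independently. Define an induced drawing $\mathcal{D}$ of $G$ by representing each edge $(u,v) \in E(G)$ by the curve of $(\phi(u), \phi(v))$ in $\mathcal{D}'$. Because $\mathcal{D}'$ is simple every crossing involves four distinct endpoint vertices, and such a crossing is inherited by $\mathcal{D}$ precisely when $\phi$ matches all four of them. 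This happens with probability $1/m^4$ when the four endpoints lie in four different classes $U_v$ and with probability $0$ otherwise, so summing with weights yields
\[\mathbb{E}[\mathop{\mathrm{cr}}(G, \mathcal{D})] \leq \frac{\mathop{\mathrm{cr}}(G[m])}{m^4},\]
and the desired inequality follows by picking a realization of $\phi$ that meets the expectation.

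For the upper bound $\mathop{\mathrm{cr}}(G[m]) \leq m^4 \mathop{\mathrm{cr}}(G) + n^3 m^4$, I would start from a drawing $\mathcal{D}$ of $G$ attaining $\mathop{\mathrm{cr}}(G)$ and build $\mathcal{D}'$ by the standard vertex-blow-up construction. Choose pairwise disjoint tiny disks $D_u$ around each $u$ containing no crossings of $\mathcal{D}$ and touched only by the curves of edges incident to $u$, and place $u_1, \ldots, u_m$ inside $D_u$. Outside $D_u \cup D_v$, replace the curve of each edge $(u,v) \in E(G)$ by $m^2$ near-parallel copies, one for each edge of the bundle $E_{G[m]}(U_u, U_v)$. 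Inside each $D_u$, route the $m^2 \deg_G(u)$ incoming edge-ends to the $u_i$'s in a fan-like manner that respects the cyclic order of the edges at $u$ in $\mathcal{D}$ and produces a simple sub-drawing (any two curves crossing at most once inside $D_u$).

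Crossings of $\mathcal{D}'$ then split into two kinds. A \emph{global} crossing lies outside every $D_u$ and comes from a crossing of $\mathcal{D}$ between two edges $(u,v),(u',v')$ with four distinct endpoints; each such original crossing inflates into $m^2 \cdot m^2 = m^4$ crossings of total weight $m^4 w_G(u,v) w_G(u',v')$, so the global contribution equals exactly $m^4 \mathop{\mathrm{cr}}(G)$. A \emph{local} crossing lies in some $D_u$; since the sub-drawing there is simple on $m^2 \deg_G(u)$ edges, it produces at most $\binom{m^2 \deg_G(u)}{2} \leq m^4 \deg_G(u)^2/2$ crossings, each of weight at most $1$. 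Summing over $u$ and using $\sum_u \deg_G(u)^2 \leq n^3$ bounds the total local contribution by $n^3 m^4/2$, which combined with the global term yields the claimed inequality. The main technical step is the local construction inside each $D_u$: one must verify that the $m^2 \deg_G(u)$ edge-ends arriving at $\partial D_u$ in the prescribed cyclic order can indeed be connected to $u_1, \ldots, u_m$ with every pair of curves crossing at most once and with edges of $G[m]$ sharing a common $u_i$ not crossing at all. This is routine (e.g.\ place the $u_i$'s along a short transversal in $D_u$ and fan the bundles outward), after which the global/local bookkeeping above is immediate.
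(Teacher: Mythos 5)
Your proof takes essentially the same approach as the paper: the lower bound is proved by the identical random-sampling argument (choose one representative from each $U_v$, inherit the drawing, take expectations), and the upper bound uses the same blow-up construction (tiny disks around vertices, near-parallel copies of each edge). The only difference is in the bookkeeping for the upper bound's error term: the paper counts bad $4$-tuples globally, observing that any pair of $G[m]$-edges with two endpoints in a common $U_v$ crosses at most once, giving a bound of $n\binom{m}{2}\binom{nm}{2}\leq n^3m^4$; you instead localize all error crossings to the interiors of the disks $D_u$ and bound them by $\sum_u \binom{m^2\deg_G(u)}{2}$, using $\sum_u\deg_G(u)^2\leq n^3$. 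Both accountings give the claimed bound; yours shifts a bit of the burden onto the (routine but worth stating) claim that the fan-routing inside each $D_u$ can be made pairwise once-crossing, whereas the paper folds that into the blanket requirement that no two $G[m]$-edges cross more than once anywhere.
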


\noindent\textit{Remark.} This was shown to be true for the rectilinear crossing number (instead of the crossing number) in~\cite{approximatingrect}; the proofs are very similar, and the fact that this holds also for the crossing number was already known to Fox, Pach and Suk.

\begin{proof}
We begin by proving the inequality on the left. Consider a simple drawing $\mathcal{D}$ of $G[m]$ which attains $\mathop{\mathrm{cr}}(G)$. From this drawing, we obtain a drawing $\mathcal{D}'$ of $G$ in the following way: Independently for each $v\in V$, choose uniformly at random an element $v'$ from $U_v$ (see the definition of $G[m]$ in Section~\ref{sec:cutdistance}); the point $p_v$ that represents $v'$ in $\mathcal{D}$ will be used to represent $v$ in $\mathcal{D}'$. Once all these points have been chosen, notice that for every two adjacent vertices $u$ and $v$ of $G$ the points $p_v$ and $p_u$ are connected by a curve in $\mathcal{D}$; this is the curve that will represent the edge $(u,v)$ in $\mathcal{D}'$. Clearly, $\mathcal{D}'$ is simple. For any two non-adjacent edges $e_1=(a,b),e_2=(c,d)\in E$, the probability that they cross in $\mathcal{D}'$ is $ c(U_a,U_b|U_c,U_d)/m^4$, where $c(U_a,U_b|U_c,U_d)$ denotes the number of quadruples $(a',b',c',d')$ with $a'\in U_a$, $b'\in U_b$, $c'\in U_c$, $d'\in U_d$ such that the edges $(a',b')$ and $(c',d')$ cross each other in $\mathcal{D}$. From this, we get that \[\mathbb{E}[\mathop{\mathrm{cr}}(G,\mathcal{D}')]\leq\mathop{\mathrm{cr}}(G[m],\mathcal{D})/m^4=\mathop{\mathrm{cr}}(G[m])/m^4.\] Since $\mathop{\mathrm{cr}}(G)\leq\mathbb{E}[\mathop{\mathrm{cr}}(G,\mathcal{D}')]$, the result follows.

Now we prove the inequality on the right. Take a drawing $\mathcal{D}$ of $G$ that attains $\mathop{\mathrm{cr}}(G)$ and let $\varepsilon$ be a very small positive real (this will be made more precise in a moment). We obtain a drawing $\mathcal{D}'$ of $G[m]$ as follows: For every $v\in V$, replace the point that represents $v$ by a set of $m$ distinct points, each of them at distance at most $\varepsilon$ from the original one; these points will represent the elements of $U_v$. For each $(u,v)\in E$, we draw all the edges between $U_u$ and $U_v$ by following along the curve that represents $(u,v)$ in $\mathcal{D}$ (this makes sense if $\varepsilon$ is small enough), making sure that no two edges of $G[m]$ cross each other more than once. We call a $4$-tuple $(a,b,c,d)$ of vertices of $G[m]$ \textit{bad} if two of its elements belong to the same $U_v$, and \textit{good} otherwise. If $\varepsilon$ is small enough, then for any good $4$-tuple $(a,b,c,d)$ with $a\in U_a'$, $b\in U_b'$, $c\in U_c'$, $d\in U_d'$ the edges $(a,b)$ and $(c,d)$ cross in $\mathcal{D}'$ if and only if $(a,b)$ and $(c,d)$ cross in $\mathcal{D}$. On the other hand, the number of crossings coming from bad $4$-tuples is less than $n\binom{m}{2}\binom{nm}{2}$, so we have that \[\mathop{\mathrm{cr}}(G[m])\leq\mathop{\mathrm{cr}}(G[m],\mathcal{D}')\leq m^4\mathop{\mathrm{cr}}(G,\mathcal{D})+n\binom{m}{2}\binom{nm}{2}\leq m^4\mathop{\mathrm{cr}}(G)+n^3m^4.\]  
\end{proof}

By theorems~\ref{teo:closegraphs} and~\ref{teo:blowup}, the crossing density of graphs satisfies properties (i) and (ii) in Section~\ref{sec:estimable}, and it clearly satisfies (iii) as well. This implies that the crossing density is an estimable graph parameter. That the rectilinear crossing density is estimable was already shown in~\cite{approximatingrect}.

\begin{theorem}\label{teo:estimable}
    The crossing and rectilinear crossing densities are estimable graph parameters. As a consequence, there exist two functionals $g$ and $\overline{g}$ on $W_0$ such that $\lim_{n\rightarrow\infty}\mathop{\mathrm{cd}}(G_n)=g(W)$ and $\lim_{n\rightarrow\infty}\overline{\mathop{\mathrm{cd}}}(G_n)=\overline{g}(W)$ whenever $G_1,G_2,\dots$ converges to $W$ and the number of vertices of $G_n$ goes to infinity with $n$.
\end{theorem}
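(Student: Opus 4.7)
The plan is to verify that the crossing density satisfies the three properties (i), (ii), (iii) of subsection~\ref{sec:estimable}, from which estimability follows by the characterization of Borgs et al. For the rectilinear crossing density, the same strategy works and was already essentially observed in~\cite{approximatingrect}, so the core work lies on the crossing density side.

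For property (i), I would use Theorem~\ref{teo:closegraphs}. If $G_1,G_2$ are $n$-vertex graphs with $d_\square(G_1,G_2)=d$, then dividing the main estimate by $n^4$ yields $|\mathop{\mathrm{cd}}(G_1)-\mathop{\mathrm{cd}}(G_2)|\leq Md^{1/4}$ when $d\geq n^{-4}$, while the unconditional bound $|\mathop{\mathrm{cr}}(G_1)-\mathop{\mathrm{cr}}(G_2)|\leq d^2 n^8$ gives $|\mathop{\mathrm{cd}}(G_1)-\mathop{\mathrm{cd}}(G_2)|\leq d^2 n^4 \leq d$ when $d<n^{-4}$. In either case, $|\mathop{\mathrm{cd}}(G_1)-\mathop{\mathrm{cd}}(G_2)|\leq Md^{1/4}+d$, so any given $\varepsilon$ is achieved by taking $\varepsilon'$ small enough. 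For property (iii), note that adding an isolated vertex does not affect $\mathop{\mathrm{cr}}$, so if $G$ has $n$ vertices, then $|\mathop{\mathrm{cd}}(G)-\mathop{\mathrm{cd}}(G\cup K_1)|=\mathop{\mathrm{cr}}(G)|n^{-4}-(n+1)^{-4}|=O(1/n)$, using the trivial upper bound $\mathop{\mathrm{cr}}(G)\leq\binom{n}{2}^2/2$.

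The only slightly less trivial step is (ii). Theorem~\ref{teo:blowup} gives $0\leq\mathop{\mathrm{cr}}(G[m])-m^4\mathop{\mathrm{cr}}(G)\leq n^3 m^4$, and since $G[m]$ has $nm$ vertices, dividing by $(nm)^4$ yields $0\leq\mathop{\mathrm{cd}}(G[m])-\mathop{\mathrm{cd}}(G)\leq 1/n$. Applying the same inequality with $G$ replaced by $G[m_1]$ (which has $nm_1$ vertices), and using the identity $(G[m_1])[m_2]=G[m_1 m_2]$, gives $0\leq\mathop{\mathrm{cd}}(G[m_1 m_2])-\mathop{\mathrm{cd}}(G[m_1])\leq 1/(nm_1)$, and symmetrically for $m_2$. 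Combining these two bounds via $G[m_1 m_2]$ yields $|\mathop{\mathrm{cd}}(G[m_1])-\mathop{\mathrm{cd}}(G[m_2])|\leq 1/(n\min(m_1,m_2))$, so $\{\mathop{\mathrm{cd}}(G[m])\}_m$ is Cauchy and therefore convergent.

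With (i), (ii), and (iii) verified, the theorem of~\cite{borgslimits} recalled in subsection~\ref{sec:estimable} yields that $\mathop{\mathrm{cd}}$ is estimable. The very same argument works for $\overline{\mathop{\mathrm{cd}}}$: the analog of Theorem~\ref{teo:blowup} for the rectilinear crossing number is proved in~\cite{approximatingrect}, and Theorem~1 of~\cite{approximatingrect} (combined with the trivial observation on isolated vertices) plays the role of Theorem~\ref{teo:closegraphs} in verifying (i) and (iii). Finally, the existence of the continuous functionals $g$ and $\overline{g}$ with the stated convergence property is exactly the content of characterization (c) of subsection~\ref{sec:estimable}, together with the remark immediately following that characterization. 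I expect no real obstacles here; the only subtlety is making sure that (ii) is deduced from Theorem~\ref{teo:blowup} correctly by exploiting the semigroup structure of blow-ups rather than trying to prove monotonicity in $m$ directly.
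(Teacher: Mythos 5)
Your proposal is correct and takes the same route as the paper, which deduces estimability from properties (i), (ii), (iii) of subsection~\ref{sec:estimable} via Theorems~\ref{teo:closegraphs} and~\ref{teo:blowup}; the paper states this very tersely, whereas you fill in the details (the $d\geq n^{-4}$ case split for (i), the Cauchy argument via the semigroup identity $(G[m_1])[m_2]=G[m_1m_2]$ for (ii), the $O(1/n)$ estimate for (iii)), all of which check out.
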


\subsection{The algorithm}

Here, we present the algorithm that was promised in Theorem~\ref{teo:main}. The first three steps of the algorithm are completely standard and, in particular, very similar to the algorithm from~\cite{approximatingrect} (see Theorem~\ref{teo:FoxPachSuk}).

\begin{enumerate}
    \item We are given an $n$-vertex graph $G=(V,E)$ as input.

    \item Set $\varepsilon=(\log\log n)^{-\frac{1}{2c}}$ and use Theorem~\ref{teo:FriezeKannan} to find an equitable Frieze-Kannan $\varepsilon$-regular partition $\mathcal{P}=\{V_1,V_2,\dots,V_k\}$ of $G$, where $k\leq O(2^{\sqrt{\log\log n}})$ (here, $c$ denotes the constant in the satement of the said theorem). This takes $n^{2+o(1)}$ time.

    \item Consider the edge-weighted graph $G/\mathcal{P}$ and compute its crossing number using Theorem~\ref{teo:exactcr}. Then, output $\text{cr}(G/\mathcal{P})\cdot (\lfloor n/k\rfloor)^4$. This can be done in $2^{O(k^4\log k)}=n^{o(1)}$ time.

\begin{proof}[Proof of Correctness]
    Let $H$ denote the blow-up graph $(G/\mathcal{P})[\lfloor n/k\rfloor]$. By Theorem~\ref{teo:blowup}, the crossing number of $H$ satisfies:\[\text{cr}(G/\mathcal{P})(\lfloor n/k\rfloor)^4\leq\text{cr}(H)\leq\text{cr}(G/\mathcal{P})(\lfloor n/k\rfloor)^4+n^4/k.\] The graph $H$ can be obtained from $G_\mathcal{P}$ by removing less than $k$ vertices (along with the attached edges), thus \[\text{cr}(H)\leq \text{cr}(G_\mathcal{P})\leq \text{cr}(H)+kn^3.\] Finally, $\mathcal{P}$ was chosen so that $d_\square(G,G_\mathcal{P})\leq\varepsilon$, so Theorem~\ref{teo:closegraphs} yields $|\text{cr}(G)-\text{cr}(G_\mathcal{P})|\leq M\varepsilon^{1/4}n^4$. Together, these inequalities imply that \[|\text{cr}(G/\mathcal{P})(\lfloor n/k\rfloor)^4- \text{cr}(G)|\leq n^4/k+kn^3+M\varepsilon^{1/4}n^4=O\left(\frac{n^4}{(\log\log n)^{\frac{1}{8c}}}\right).\]
\end{proof}

    \item Write $\delta'=1/8c$. If we also wish to find a drawing of $G$ with $\mathop{\mathrm{cr}}(G)+O(n^4/(\log\log n)^{\delta'})$ crossings, then we first need access to a simple and locally optimal drawing of $G_\mathcal{P}$ (or of a graph that is very close to $G_\mathcal{P}$ with respect to $d_\square$) with crossing number $\mathop{\mathrm{cr}}(G_\mathcal{P})+O(n^4/(\log\log n)^{\delta'})$. A drawing of $G_\mathcal{P}$ with this many crossings can be obtained by combining the drawing of $G/\mathcal{P}$ provided by~\ref{teo:exactcr} with the technique used in the proof of the right hand side inequality in the statement of Theorem~\ref{teo:blowup}. Next, we round the weights of the edges of $G_{\mathcal P}$ so that they can be written as an integer divided by some fixed integer $s=\Theta(\log\log n)^{\delta'}$, thus obtaining a new graph $G_\mathcal{P}'$ whose crossing number differs from that of $G_\mathcal{P}$ by at most $O(n^4/(\log\log n)^{\delta'})$. In order to construct a locally optimal drawing of $G_\mathcal{P}'$, we iteratively refine the drawing by using Dijkstra's algorithm to redraw the edges until it is no longer possible to reduce the crossing number of the drawing in this manner. Each step reduces the weighted sum of the crossings by at least $1/s^2$, so the whole process takes polynomial time. Now, we can follow the proof of Theorem~\ref{teo:closegraphs}, which is essentially algorithmic, to obtain a random drawing of $G$. At some point during this procedure, we also require a locally optimal drawing of a certain subgraph of $G$; this drawing can be constructed just as we did for $G_\mathcal{P}'$, since every step will reduce the weighted sum of the crossings by at least $1$. The expected number of crossings in the resulting drawing of $G$ is $\mathop{\mathrm{cr}}(G)+O(n^4/(\log\log n)^{\delta'})$, and the probability that the algorithm fails to deliver a drawing with crossing number close to this quantity can be bounded from above using Markov's inequality together with the fact that no drawing with crossing number less than $\mathop{\mathrm{cr}}(G)$ can ever be generated.

    \textit{Remark.} The only step in part $4$ of the above algorithm at which we produce an actual graph drawing is at the end. Before that, drawings are processed not as geometric objects, but combinatorial ones (in the sense that they are stored as planar maps with a vertex for each crossing point and for each vertex of the original graph). The final drawing can be constructed using any efficient algorithm for drawing planar graphs (see Chapter 6 in~\cite{tamassia_handbook} for many such algorithms).
\end{enumerate}

\section{Crossing densities of graphons}\label{sec:graphons}

\subsection{Defining the crossing and rectilinear crossing densities of graphons}\label{sec:densities} Our first goal in this section will be to introduce the \textit{rectilinear crossing density} of a graphon.  Let us start with some preliminary definitions:

For any four (not necessarily distinct) points $x,y,z,w\in\mathbb{R}^2$, let $I(x,y,z,w)=1$ if $\text{conv}\{x,y\}\cap\text{conv}\{z,w\}\neq\emptyset$ and $I(x,y,z,w)=0$ otherwise. We claim that $I$, as a function from $(\mathbb{R}^2)^4$ to $\mathbb{R}$, is measurable. This follows from the fact that $I(-1)$ can be thought of as an algebraic subset of $\mathbb R^8$. Alternatively, we can let \[X=\{(x,y,z,w)\in(\mathbb{R}^2)^4 :\ |\{x,y,z,w\}|\leq 3\ \text{or}\ x,y,z,w\ \text{are contained in a line}\}.\] The set $X$ is measurable, and the restriction of $I$ to $X$ can easily be seen to be measurable too. Hence, it suffices to show that $I^{-1}(1)\backslash X\subset(\mathbb{R}^2)^4$ is measurable. For every positive integer $m$, consider a grid-like subdivision of the plane into interior disjoint squares of side length $1/m$, which we denote by $S_{m,1},S_{m,2},\dots$ in any order. Let $A_m\subset\mathbb{N}^4$ be the set of all $4$-tuples of distinct indices $(a,b,c,d)$ such that $I(x,y,z,w)$ must equal $1$ whenever $x\in S_{m,a},\ y\in S_{m,b},\ z\in S_{m,c},\ w\in S_{m,d}$. Write \[B_m=\bigcup_{(a,b,c,d)\in A_m}S_{m,a}\times S_{m,b}\times S_{m,c}\times S_{m,d}.\] By definition, $I(B_m)=1$, and it is not hard to see that the measurable set $B=\cup_{i=1}^\infty B_i$ covers $I^{-1}(1)\backslash X$. Thus, $I^{-1}(1)\backslash X=B\backslash X$, which implies the claim.

As a consequence of the above result, if $f:[0,1]\rightarrow\mathbb{R}^2$ is measurable then the function $I_f:[0,1]^4\rightarrow[0,1]$ defined by $I_f(x,y,z,w)=I(f(x),f(y),f(z),f(w))$ is measurable too. Denote the family of all bounded measurable functions from $[0,1]$ to $\mathbb{R}^2$ by $\mathcal{F}$. Given a graphon $W$ and a function $f\in\mathcal{F}$, let \[\overline{\mathop{\mathrm{cd}}}(W,f)=\frac{1}{8}\int_{[0,1]^4}W(x_1,x_2)W(x_3,x_4)I_f(x_1,x_2,x_3,x_4)\prod_{i=1}^4 dx_i.\]
By the discussion above, this integral is well defined. The function $f$ can be thought of as a straight-line drawing of $W$; in fact, the definition of $\overline{\mathop{\mathrm{cd}}}(W,f)$ is essentially the continuous (and normalized) analog of the definition of $\overline{\mathop{\mathrm{cr}}}(G,\mathcal{D})$ given in Section~\ref{sec:prelim}. Note that we divide by $8$ to make up for the fact the each crossing is counted eight times by the integral. The \textit{rectilinear crossing density} of $W$ is now defined as \[\overline{\mathop{\mathrm{cd}}}(W)=\inf_{f\in\mathcal{F}}\overline{\mathop{\mathrm{cd}}}(W,f).\]

The definition of the \textit{crossing density} of a graphon is somewhat more intricate. Let $\mathcal{C}$ denote the family of all curves $c:[0,1]\rightarrow\mathbb{R}^2$ that are either simple or constant\footnote{Up to this point, we had used the term curve to refer to the image of a continuous function from an interval to the plane. From here on out, both the function and its image will be referred to as curves.} A \textit{simple graphon drawing} is a function $\mathcal{D}:[0,1]^2\rightarrow\mathcal{C}$ such that the following properties hold:

\begin{enumerate}
    \item For any $x,y,t\in[0,1]$, $\mathcal{D}(x,y)(t)=\mathcal{D}(y,x)(1-t)$.

    \item For every $x\in[0,1]$, the image of $\mathcal{D}(x,x)$ is a single point, which we denote by $\mathcal{D}(x)$.

    \item For any $x,y\in[0,1]$, $\mathcal{D}(x,y)(0)=\mathcal{D}(x)$ and $\mathcal D(x,y)(1)=\mathcal D(y)$.

    \item For any $x_1,x_2,y_1,y_2\in[0,1]$, if the images of $\mathcal{D}(x_1,y_1)$ and $\mathcal{D}(x_2,y_2)$ have nonempty intersection (as subsets of $\mathbb{R}^2$), then their intersection is connected.
\end{enumerate}

For a simple graphon drawing $\mathcal{D}$ and $x,y,z,w\in[0,1]$, let $I_\mathcal{D}(x,y,z,w)=1$ if the curves $\mathcal{D}(x,y)$ and $\mathcal{D}(z,w)$ have at least one point in common, and $I_\mathcal{D}(x,y,z,w)=0$ otherwise. Let $\mathcal{D}_\mathcal{W}$ denote the family of all simple graphon drawings $\mathcal{D}$ such that $I_\mathcal{D}$ is measurable. Given a graphon $W$ and a drawing $\mathcal{D}\in\mathcal{D}_\mathcal{W}$, write \[\mathop{\mathrm{cd}}(W,\mathcal{D})=\frac{1}{8}\int_{[0,1]^4}W(x_1,x_2)W(x_3,x_4)I_\mathcal{D}(x_1,x_2,x_3,x_4)\prod_{i=1}^4 dx_i.\] The \textit{crossing density} of $W$ is defined as \[\mathop{\mathrm{cd}}(W)=\inf_{\mathcal{D}\in\mathcal{D}_\mathcal{W}}\mathop{\mathrm{cd}}(W,\mathcal{D}).\] 

\noindent\textit{Remark.} We feel the need to clarify that, even though we make a slight abuse of notation by using each of $\mathop{\mathrm{cd}}$ and $\overline{\mathop{\mathrm{cd}}}$ to denote both a graph parameter and a functional on $\mathcal{W}_0$, it is not necessarily true that $\mathop{\mathrm{cd}}(G)=\mathop{\mathrm{cd}}(W_G)$ or $\overline{\mathop{\mathrm{cd}}}(G)=\overline{\mathop{\mathrm{cd}}}(W_G)$ (see Theorem~\ref{teo:G_W}). Also, while it is not completely evident from the definitions that $\overline{\mathop{\mathrm{cd}}}(W)\leq\mathop{\mathrm{cd}}(W)$, this inequality does hold for every $W\in\mathcal{W}_0$.

It is natural to ask whether for every graphon $W$ there is an $f\in\mathcal{F}$ such that $\overline{\mathop{\mathrm{cr}}}(W)=\overline{\mathop{\mathrm{cr}}}(W,f)$, or a $\mathcal{D}\in\mathcal{D}_\mathcal{W}$ such that $\mathop{\mathrm{cd}}(W)=\mathop{\mathrm{cd}}(W,\mathcal{D})$ (in other words, can the infimum be substituted by a minimum in the above definitions?). Unfortunately, we do not know the answer to either of these question. The following result, however, is an immediate consequence of the Banach-Alaoglu theorem.

\begin{lemma}\label{teo:weakclosure}
There exist two families $\overline{\mathcal{I}}$ and $\mathcal{I}$ of measurable functions from $\mathbb{R}^4$ to $\mathbb{R}$ such that, for every $W\in\mathcal{W}_0$, we have \[\overline{\mathop{\mathrm{cd}}}(W)=\min_{I\in\overline{\mathcal{I}}}\frac{1}{8}\int_{[0,1]^4}W(x_1,x_2)W(x_3,x_4)I(x_1,x_2,x_2,x_4)\prod_{i=1}^4 dx_i\] and \[\mathop{\mathrm{cd}}(W)=\min_{I\in\mathcal{I}}\frac{1}{8}\int_{[0,1]^4}W(x_1,x_2)W(x_3,x_4)I(x_1,x_2,x_2,x_4)\prod_{i=1}^4 dx_i.\] Furthermore, these families can be chosen so that all of their elements take values in $[0,1]$.
\end{lemma}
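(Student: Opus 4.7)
My plan is to embed the sets
\[
\overline{\mathcal{J}} = \{I_f : f \in \mathcal{F}\} \quad\text{and}\quad \mathcal{J} = \{I_\mathcal{D} : \mathcal{D} \in \mathcal{D}_\mathcal{W}\}
\]
into the closed unit ball $B$ of $L^\infty([0,1]^4)$ and then to take $\overline{\mathcal{I}}$ and $\mathcal{I}$ to be their weak-$*$ closures. Since $I_f$ and $I_\mathcal{D}$ take values in $\{0,1\}$, both $\overline{\mathcal{J}}$ and $\mathcal{J}$ lie in $B$. Viewing $L^\infty([0,1]^4)$ as the dual of $L^1([0,1]^4)$, the Banach-Alaoglu theorem tells us that $B$ is weak-$*$ compact, so $\overline{\mathcal{I}}$ and $\mathcal{I}$ are weak-$*$ compact as well.

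The next point to observe is that for every graphon $W$, the product $W(x_1,x_2)W(x_3,x_4)$ is bounded on $[0,1]^4$ and hence lies in $L^1([0,1]^4)$. Therefore the linear functional
\[
\Phi_W(I) = \frac{1}{8}\int_{[0,1]^4} W(x_1,x_2)\,W(x_3,x_4)\,I(x_1,x_2,x_3,x_4) \prod_{i=1}^4 dx_i
\]
is, by the very definition of the weak-$*$ topology, weak-$*$ continuous on $L^\infty([0,1]^4)$. Since continuous functions attain their infima on compact sets, the infima of $\Phi_W$ over $\overline{\mathcal{I}}$ and $\mathcal{I}$ are actually minima, and by weak-$*$ density of $\overline{\mathcal{J}}$ in $\overline{\mathcal{I}}$ and of $\mathcal{J}$ in $\mathcal{I}$ together with the continuity of $\Phi_W$, these minima equal $\inf_{f \in \mathcal{F}} \Phi_W(I_f) = \overline{\mathop{\mathrm{cd}}}(W)$ and $\inf_{\mathcal{D} \in \mathcal{D}_\mathcal{W}} \Phi_W(I_\mathcal{D}) = \mathop{\mathrm{cd}}(W)$, respectively.

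To guarantee that the elements of $\overline{\mathcal{I}}$ and $\mathcal{I}$ take values in $[0,1]$, I would show that the set
\[
K = \{I \in L^\infty([0,1]^4) : 0 \leq I \leq 1 \text{ a.e.}\}
\]
is weak-$*$ closed and note that it contains $\overline{\mathcal{J}}$ and $\mathcal{J}$; consequently it contains their weak-$*$ closures, so we may intersect with $K$ without losing anything. The weak-$*$ closedness of $K$ follows by writing it as the intersection, over all measurable $A\subseteq [0,1]^4$, of the weak-$*$ closed half-spaces $\{I : \int I \chi_A \geq 0\}$ and $\{I : \int (1-I)\chi_A \geq 0\}$, which is legitimate because $\chi_A \in L^1([0,1]^4)$ (the ambient measure is finite).

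The substantive checks are the weak-$*$ closedness of $K$ and the weak-$*$ continuity of $\Phi_W$; both are standard but are the only places where something must be verified. Everything else is a direct invocation of Banach-Alaoglu together with the fact that $\Phi_W$ is literally one of the functionals used to define the weak-$*$ topology on $L^\infty$. I do not anticipate any serious obstacle, and the argument is essentially the same for both $\overline{\mathop{\mathrm{cd}}}$ and $\mathop{\mathrm{cd}}$, the only difference being the set of indicator functions with which we start.
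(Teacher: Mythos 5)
Your proposal is correct and takes essentially the same approach as the paper: both invoke Banach--Alaoglu for weak-$*$ compactness of the unit ball of $L^\infty([0,1]^4)$, define the families as weak-$*$ closures of $\{I_f\}$ and $\{I_\mathcal{D}\}$ (the paper phrases this as adjoining all sequential weak-$*$ limit points, which coincides with the topological closure since $L^1([0,1]^4)$ is separable and the weak-$*$ topology on bounded sets is therefore metrizable), and rely on the weak-$*$ continuity of $I\mapsto\int (W\otimes W)\, I$ to conclude. Your handling of the $[0,1]$-valuedness via the weak-$*$ closedness of $K$ is more explicit than the paper's one-line assertion, but the underlying argument is the same.
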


\begin{proof}
    Consider the sets $A=\{I_f\ |\ f\in\mathcal{F}\}$ and $B=\{I_\mathcal{D}\ |\ \mathcal{D}\in\mathcal{D}_\mathcal{W}\}$. By Banach-Alaoglu, any sequence of elements of $A$ has a subsequence that is convergent in the weak* topology\footnote{A sequence $f_1,f_2,\dots$ of integrable functions defined on $[0,1]^4$ is weak* convergent if for any integrable function $g$ defined on the same set we have that $\langle f_n,g\rangle=\int_{[0,1]^4}f_ng\text{ } dx$ converges as $n$ goes to infinity.}, and the same is true for $B$. Now, let $W\in\mathcal{W}_0$ and take a sequence $f_1,f_2,\dots$ of elements of $\mathcal{F}$ such that $\lim_{n\rightarrow\infty}\overline{\mathop{\mathrm{cr}}}(W,f_n)=\overline{\mathop{\mathrm{cr}}}(W)$. We can pass to a subsequence $f'_1,f ' _2,\dots$ such that $I_{f'_1},I_{f'_2},\dots$ converges in the weak* topology to a measurable function $I$, which can clearly be chosen so that it takes values in $[0,1]$. $I$ satisfies \[\overline{\mathop{\mathrm{cd}}}(W)=\frac{1}{8}\int_{[0,1]^4}W(x_1,x_2)W(x_3,x_4)I(x_1,x_2,x_2,x_4)\prod_{i=1}^4 dx_i.\] Hence, setting $\overline{\mathcal{I}}$ to be the set formed by all elements of $A$ and all sequential limit points of $A$ with respect to the weak* topology which have image in $[0,1]$ does the job. In the same way, we can show that the set $\mathcal{I}$ which consists of all elements of $B$ and all of its sequential limit points with image in $[0,1]$ has the required properties.
\end{proof}

For every $W\in\mathcal{W}_0$, let $\overline{I}_W\in\overline{\mathcal{I}}$ and $I_W\in\mathcal{I}$ denote two functions that attain the minimums in the statement of Lemma~\ref{teo:weakclosure}. The questions above can now be restated as follows. Given a graphon $W$, can the functions $\overline{I}_W$ and $I_W$ always be chosen so that $\overline{I}_W=I_f$ for some $f\in\mathcal{F}$ and $I_W=I_\mathcal{D}$ for some $\mathcal{D}\in\mathcal{D}_\mathcal{W}$? We will come back to this in Section~\ref{sec:final}.

\subsection{Continuity with respect to \texorpdfstring{$d_\square$}{TEXT} and connection to graphs}

Here, we prove that the crossing and rectilinear crossing densities are continuous with respect to $d_\square$, and that they extend the corresponding graph parameters in a precise sense (see Theorem~\ref{teo:continuous}). We must first establish a weaker sort of continuity for $\text{cd}$ and $\overline{\text{cd}}$.

\begin{lemma}\label{teo:pointwise}
    If $W_1,W_2,\dots$ is a sequence of graphons that converges to another graphon $W$ almost everywhere, then \[\lim_{n\rightarrow\infty}\overline{\mathop{\mathrm{cd}}}(W_n)=\overline{\mathop{\mathrm{cd}}}(W)\text{ and } \lim_{n\rightarrow\infty}\mathop{\mathrm{cd}}(W_n)=\mathop{\mathrm{cd}}(W).\]
\end{lemma}

\begin{proof}
 The \textit{tensor product} of two graphons $W_1$ and $W_2$ is the function $W_1\otimes W_2:[0,1]^4\rightarrow[0,1]$ defined by $(W_1\otimes W_2)(x,y,z,w)=W_1(x,y)W_2(z,w)$. 
 
 Consider a sequence of graphons $W_1,W_2,\dots$ that converges to $W\in\mathcal{W}_0$ almost everywhere. Clearly, the sequence $W_1\otimes W_1,W_2\otimes W_2,\dots$ converges to $W\otimes W$ almost everywhere. Choose a subsequence $W_1',W_2',\dots$ of $W_1,W_2,\dots$ with $\lim_{n\rightarrow\infty}\overline{\mathop{\mathrm{cd}}}(W_n')=\liminf_{n\rightarrow\infty}\overline{\mathop{\mathrm{cd}}}(W_n)$ and, for every $n$, let $f_n$ be an element of $\mathcal{F}$ such that $\overline{\mathop{\mathrm{cd}}}(W_n,f_n)\leq\overline{\mathop{\mathrm{cd}}}(W_n)+1/2^n$. By Banach-Alaoglu, we can pass to a further subsequence $W_{s(1)}',W_{s(2)}',\dots$ of $W_1',W_2',\dots$ such that $\overline{I}_{f_{s(1)}},\overline{I}_{f_{s(2)}},\dots$ converges to some $I\in\overline{\mathcal{I}}$ in the weak* topology. This way, \[\overline{\mathop{\mathrm{cd}}}(W_{s(n)}')\leq
 \frac{1}{8}\int_{[0,1]^4} (W_{s(n)}'\otimes W_{s(n)}')\overline{I}_{f_{s(n)}}(x_1,x_2,x_3,x_4)\prod_{i=1}^4 dx_i\leq\overline{\mathop{\mathrm{cd}}}(W_{s(n)}')+1/2^{s(n)}\] for every positive integer $n$, and so the expression in the middle converges to \[\lim_{n\rightarrow\infty}\overline{\mathop{\mathrm{cd}}}(W_{s(n)}')=\lim_{n\rightarrow\infty}\overline{\mathop{\mathrm{cd}}}(W_n')=\liminf_{n\rightarrow\infty}\overline{\mathop{\mathrm{cd}}}(W_n)\] as $n$ goes to infinity. 
 
 On the other hand, a standard  argument from analysis shows that the sequence $(W_{s(1)}\otimes W_{s(1)})\cdot\overline{I}_{f_{s(1)}},(W_{s(1)}\otimes W_{s(1)})\cdot\overline{I}_{f_{s(1)}},\dots$ converges to $(W\otimes W)\cdot I$ in the weak* topology. Putting everything together, we get that \[\liminf_{n\rightarrow\infty}\overline{\mathop{\mathrm{cd}}}(W_n)=\lim_{n\rightarrow\infty}\overline{\mathop{\mathrm{cd}}}(W_{s(n)}')=\overline{\mathop{\mathrm{cd}}}(W,I)\geq\overline{\mathop{\mathrm{cd}}}(W).\] 

 Now, observe that \[\overline{\mathop{\mathrm{cd}}}(W_n)\leq\frac{1}{8}\int_{[0,1]^4} (W_n\otimes W_n)\overline{I}_W(x_1,x_2,x_3,x_4)\prod_{i=1}^4 dx_i\] for every $n$. Since the expression on the right converges to $\overline{\mathop{\mathrm{cd}}}(W)$ as $n$ goes to infinity, we arrive at $\limsup_{n\rightarrow\infty}\overline{\mathop{\mathrm{cd}}}(W_n)\leq\overline{\mathop{\mathrm{cd}}}(W)$, but we also had $\liminf_{n\rightarrow\infty}\overline{\mathop{\mathrm{cd}}}(W_n)\geq\overline{\mathop{\mathrm{cd}}}(W)$; this can only occur if $\lim_{n\rightarrow\infty}\overline{\mathop{\mathrm{cd}}}(W_n)=\overline{\mathop{\mathrm{cd}}}(W)$. 
 
 The fact that $\lim_{n\rightarrow\infty}\mathop{\mathrm{cd}}(W_n)=\mathop{\mathrm{cd}}(W)$ follows verbatim.
\end{proof}

Next, we establish a connection between the crossing densities of graphons and the crossing densities of graphs.

\begin{theorem}\label{teo:G_W}
For every edge-weighted graph $G$ on $n$ vertices, we have that \[0\leq\overline{\mathop{\mathrm{cd}}}(W_G)-\overline{\mathop{\mathrm{cd}}}(G)\leq 1/n\text{ and } 0\leq\mathop{\mathrm{cd}}(W_G)-\mathop{\mathrm{cd}}(G)\leq 1/n.\]   
\end{theorem}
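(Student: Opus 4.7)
The plan is to prove the four inequalities in pairs via two natural constructions: the upper bounds come from lifting an optimal drawing of $G$ to a feasible object for $W_G$, and the lower bounds from probabilistically sampling a drawing of $G$ out of such an object. The same template works in parallel for the rectilinear and general cases.

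For the upper bounds, I start from a straight-line drawing attaining $\overline{\mathop{\mathrm{cr}}}(G)$ (respectively, a simple drawing attaining $\mathop{\mathrm{cr}}(G)$), in which $v_i$ is placed at some point $p_{v_i}$. In the rectilinear case I take $f \in \mathcal{F}$ to be the step function equal to $p_{v_i}$ on $I_i$. In the general case I define $\mathcal{D}_{W_G} \in \mathcal{D}_{\mathcal{W}}$ by letting $\mathcal{D}_{W_G}(x,y)$ be a fixed parametrization of the curve representing $(v_i,v_j)$ whenever $x \in I_i,\ y \in I_j$ with $i \neq j$, and the constant curve at $p_{v_i}$ whenever $x,y \in I_i$; the four axioms of a simple graphon drawing are routine to verify, and $I_{\mathcal{D}_{W_G}}$ is a step function of the partition, hence measurable. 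In both cases the defining integral collapses to
\[
\overline{\mathop{\mathrm{cd}}}(W_G,f) \;=\; \frac{1}{8n^4}\sum_{(i,j,k,l)\in [n]^4} w_G(v_i,v_j)\,w_G(v_k,v_l)\,I(p_{v_i},p_{v_j},p_{v_k},p_{v_l})
\]
(analogously for $\mathcal{D}_{W_G}$). Tuples with $i=j$ or $k=l$ vanish (loops have weight $0$); tuples with all four indices distinct sum to exactly $8\,\overline{\mathop{\mathrm{cr}}}(G)$, since each unordered pair of crossing edges contributes eight orderings; and the remaining \emph{degenerate} tuples, those with $\{i,j\}\cap\{k,l\}\neq \emptyset$ and $i\neq j,\ k\neq l$, correspond to pairs of edges sharing a vertex, so $I=1$ automatically and a direct count bounds their total by $4\sum_{v\in V}\bigl(\sum_u w_G(u,v)\bigr)^2 \leq 4n^3$. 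Dividing by $8n^4$, the excess is at most $1/(2n)\leq 1/n$.

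For the lower bounds, given any $f\in \mathcal{F}$ (resp.~$\mathcal{D}\in \mathcal{D}_{\mathcal{W}}$), I sample $x_i \in I_i$ independently and uniformly and form a random drawing of $G$ by placing $v_i$ at $f(x_i)$ (resp.~$\mathcal{D}(x_i)$) and drawing the edge $(v_i,v_j)$ as the straight segment through these points (resp.~the curve $\mathcal{D}(x_i,x_j)$, which joins them by axioms (2) and (3)). Independence together with $\lambda(I_i)=1/n$ implies that any two vertex-disjoint edges share a point with probability $n^4\int_{I_i\times I_j\times I_k\times I_l}I\,dx$, and hence
\[
\mathbb{E}[\text{weighted crossings}] \;=\; \sum_{\{(i,j),(k,l)\}\text{ disjoint}}\! w_G(v_i,v_j)\,w_G(v_k,v_l)\,n^4\!\int\! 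I\,dx \;\leq\; n^4\,\overline{\mathop{\mathrm{cd}}}(W_G,f),
\]
since the $1/8$ factor in the definition of $\overline{\mathop{\mathrm{cd}}}$ cancels the eightfold symmetry of ordered $4$-tuples over each unordered disjoint pair, and restricting the integration domain to disjoint tuples only removes non-negative contributions. Selecting a realization that achieves at most the expectation gives $\overline{\mathop{\mathrm{cr}}}(G)\leq n^4\,\overline{\mathop{\mathrm{cd}}}(W_G,f)$, and taking the infimum yields $\overline{\mathop{\mathrm{cd}}}(G)\leq \overline{\mathop{\mathrm{cd}}}(W_G)$; the $\mathop{\mathrm{cd}}$ case is identical.

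The main obstacle is that the randomly sampled drawings in the lower-bound step need not be valid drawings of $G$: two vertex positions could coincide, an edge could pass through a third vertex, or three curves could share a point. I would address this with a small random perturbation of the vertex placements (or a small modification of $f$ and $\mathcal{D}$ before sampling) which is almost surely valid and alters the expected weighted crossing count by an arbitrarily small amount; letting the perturbation go to zero then recovers the stated inequalities.
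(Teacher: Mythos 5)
Your proof follows essentially the same two constructions as the paper: for the upper bound you take the step function $f_\mathcal{D}$ (resp.\ the piecewise-constant $\mathcal{D}_{W_G}$) arising from an optimal drawing and bound the contribution of tuples hitting the same part by roughly $4n^3/(8n^4)\leq 1/n$; for the lower bound you sample one point from each interval, pass to a realization achieving at most the expectation, and perturb to handle degeneracies. The argument is correct and matches the paper's in both structure and detail.
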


\begin{proof}
    Let $v_1,v_2,\dots,v_n$ denote the vertices of $G$ and $I_1,I_2,\dots,I_n$ be the corresponding subintervals of $[0,1]$ (see the definition of $W_G$ in Section~\ref{sec:cutdistance}). The argument is similar to the one in the proof of Theorem~\ref{teo:blowup}. We begin by showing the inequalities for rectilinear crossing densities.
  
    Given any $f\in\mathcal{F}$, we obtain a drawing $\mathcal{D}_f$ of $G$ as follows: For every $v_i$, choose uniformly at random an element $x_i$ from $I_i$; the point $f(x_i)$ will represent $v_i$. Draw every edge of $G$ as a segment joining its endpoints and, if necessary, perturb the vertices slightly so that no three of them lie on a line and no three edges have a common interior point. Note that the vertex perturbation step can be carried out without creating new crossings. A straightforward computation shows that \[\mathbb{E}[\overline{\mathop{\mathrm{cd}}}(G,\mathcal{D}_f)]\leq\overline{\mathop{\mathrm{cd}}}(W_G,f).\] Since this holds for any $f\in\mathcal{F}$, we get that $\overline{\mathop{\mathrm{cd}}}(G)\leq\overline{\mathop{\mathrm{cd}}}(W_G)$.
    
    Suppose now that we are given a rectilinear drawing $\mathcal{D}$ of $G$. This drawing induces an $f_\mathcal{D}\in\mathcal{F}$, which is defined by setting $f_\mathcal{D}(x)=p_i$, where $i$ is such that $x\in I_i$ and $p_i$ is the point that represents $v_i$ in $\mathcal{D}$. The value of $\overline{\mathop{\mathrm{cd}}}(f_\mathcal{D},W_G)$ is upper bounded by $\overline{\mathop{\mathrm{cd}}}(G)+1/n$, where the $1/n$ accounts for the crossings coming from the $4$-tuples of points in $[0,1]$ which contain two elements in the same $S_i$ (these $4$-tuples are similar to the bad $4$-tuples in the proof of Theorem~\ref{teo:blowup}). This yields $\overline{\mathop{\mathrm{cd}}}(W_G)\leq\overline{\mathop{\mathrm{cd}}}(G)+1/n$.

    The proof of the second part of the statement proceeds in essentially the same way. Let $\mathcal{D}\in\mathcal{D}_\mathcal{W}$ and construct a drawing $\mathcal{D}'$ of $G$ by placing $v_i$ at $\mathcal{D}(x)$, where $x$ is chosen uniformly at random from $I_i$, and then using the curve $\mathcal{D}(x_i,x_j)$ to represent the edge $(v_i,v_j)$. Again, we might have to tweak the drawing slightly so that no edges goes through a vertex, no two edges are tangent, and no three edges cross each other at the same point. As above, we have that \[\mathbb{E}[\mathop{\mathrm{cd}}(G,\mathcal{D}')]\leq\mathop{\mathrm{cd}}(W_G,\mathcal{D}),\] which implies $\mathop{\mathrm{cd}}(G)\leq\mathop{\mathrm{cd}}(W_G)$.

    Finally, for every drawing $\mathcal{D}$ of $G$ we obtain an element $\mathcal{D}'$ of $\mathcal{D}_\mathcal{W}$ in the following way: For every $x\in I_i$, let $\mathcal D'(x)$ be the point that represents $v_i$ in $\mathcal D$. For every $x,y\in[0,1]$ with $x\in I_i, y\in I_j$ and $i\neq j$, set $\mathcal{D}'(x,y)$ to be the curve that represents the edge $(v_i,v_j)$. As in the case of rectilinear crossing densities, this graphon drawing satisfies $\mathop{\mathrm{cd}}(W_G,\mathcal{D}')\leq\mathop{\mathrm{cd}}(G)+1/n$.
\end{proof}

Theorems~\ref{teo:estimable} and~\ref{teo:G_W} can now be combined to obtain the main result of this section.

\begin{theorem}\label{teo:continuous}
     The crossing density and the rectilinear crossing density are continuous with respect to the cut norm $d_\square$. Furthermore, if $G_1,G_2,\dots$ is a convergent sequence of edge-weighted graphs and the number of vertices of $G_n$ goes to infinity with $n$, then $\lim_{n\rightarrow\infty}\mathop{\mathrm{cd}}(G_i)$ exists and is equal to $\mathop{\mathrm{cd}}(W)$. The same is true with $\overline{\mathop{\mathrm{cd}}}$ in place of $\mathop{\mathrm{cd}}$.
    \end{theorem}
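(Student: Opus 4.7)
The plan is to identify the graphon functionals $\mathop{\mathrm{cd}}$ and $\overline{\mathop{\mathrm{cd}}}$ defined in subsection~\ref{sec:densities} with the continuous extensions $g$ and $\overline{g}$ produced by Theorem~\ref{teo:estimable}. Once this identification is in place, continuity with respect to $d_\square$ comes for free from property (c) of estimability (which is how $g$ and $\overline{g}$ are obtained), and the second half of the theorem is just a restatement of the estimability limit for convergent graph sequences.

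To establish $\mathop{\mathrm{cd}}(W) = g(W)$ and $\overline{\mathop{\mathrm{cd}}}(W) = \overline{g}(W)$ for a given graphon $W$, I would produce a single convenient sequence of edge weighted graphs $G_n$ that realizes both sides of the equality. The natural choice is the averaged step-function approximation: partition $[0,1]$ into $n$ equal subintervals $I_1,\dots,I_n$, let $G_n$ be the graph on $\{1,\dots,n\}$ with $w_{G_n}(i,j) = n^2\int_{I_i\times I_j}W$, so that $W_{G_n}=W_{\mathcal{P}_n}$. By the property of the partition sequence stated in subsection~\ref{sec:cutdistance}, $W_{G_n}\to W$ almost everywhere; since the graphons are uniformly bounded, this also yields convergence in $L^1$ and hence in the cut norm, so $G_n$ is a convergent graph sequence whose limit graphon is $W$.

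Now three limits all compute the same number. First, Theorem~\ref{teo:pointwise} gives $\mathop{\mathrm{cd}}(W_{G_n})\to\mathop{\mathrm{cd}}(W)$. Second, Theorem~\ref{teo:G_W} gives $|\mathop{\mathrm{cd}}(W_{G_n})-\mathop{\mathrm{cd}}(G_n)|\leq 1/n\to 0$, so $\mathop{\mathrm{cd}}(G_n)\to \mathop{\mathrm{cd}}(W)$ as well. Third, since $G_n$ is convergent with limit graphon $W$ and $|V(G_n)|=n\to\infty$, Theorem~\ref{teo:estimable} gives $\mathop{\mathrm{cd}}(G_n)\to g(W)$. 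Comparing the last two yields $g(W)=\mathop{\mathrm{cd}}(W)$, and the same chain with $\overline{\mathop{\mathrm{cd}}}$ and $\overline{g}$ in place of $\mathop{\mathrm{cd}}$ and $g$ gives the rectilinear version.

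With these identifications, the two assertions of the theorem are immediate: continuity of $\mathop{\mathrm{cd}}$ and $\overline{\mathop{\mathrm{cd}}}$ with respect to $d_\square$ is inherited from the continuity of $g$ and $\overline{g}$ guaranteed by property (c) of subsection~\ref{sec:estimable}, and for any convergent sequence of graphs $G_n\to W$ with $|V(G_n)|\to\infty$ the limit $\lim \mathop{\mathrm{cd}}(G_n)$ exists and equals $g(W)=\mathop{\mathrm{cd}}(W)$ directly by Theorem~\ref{teo:estimable}, with the analogous statement for $\overline{\mathop{\mathrm{cd}}}$. There is no real obstacle here: every ingredient (estimability of the densities, the $1/n$ gap between $\mathop{\mathrm{cd}}(G)$ and $\mathop{\mathrm{cd}}(W_G)$, and almost-everywhere continuity of the graphon functionals) has already been set up precisely to make this bookkeeping work.
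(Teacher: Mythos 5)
Your approach is essentially the paper's: identify $\mathop{\mathrm{cd}}$ and $\overline{\mathop{\mathrm{cd}}}$ with the functionals $g$ and $\overline g$ from Theorem~\ref{teo:estimable} by exhibiting a single approximating graph sequence along which all three quantities converge, then read off continuity from property (c) of estimability.

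The one slip is the claim $W_{G_n}=W_{\mathcal{P}_n}$. Since a simple graph has no loops, $W_{G_n}$ vanishes on the diagonal blocks $I_i\times I_i$, whereas $W_{\mathcal{P}_n}$ takes the block average of $W$ there; so in fact $W_{G_n}=W_{\mathcal{P}_n}^{\text{diag}}$ in the paper's notation, and the fact stated in subsection~\ref{sec:cutdistance} gives a.e.\ convergence for $W_{\mathcal{P}_n}$, not directly for $W_{G_n}$. The paper introduces $W_{\mathcal{P}}^{\text{diag}}$ precisely to bridge this gap, observing that for any fixed $(x,y)$ with $x\neq y$ the two step functions agree once $1/n<|x-y|$, so a.e.\ convergence transfers from $W_{\mathcal{P}_n}$ to $W_{\mathcal{P}_n}^{\text{diag}}=W_{G_n}$. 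With that one-line correction inserted, your chain of limits --- Theorem~\ref{teo:pointwise} for $\mathop{\mathrm{cd}}(W_{G_n})\to\mathop{\mathrm{cd}}(W)$, Theorem~\ref{teo:G_W} for $|\mathop{\mathrm{cd}}(W_{G_n})-\mathop{\mathrm{cd}}(G_n)|\leq 1/n$, and Theorem~\ref{teo:estimable} for $\mathop{\mathrm{cd}}(G_n)\to g(W)$ --- is exactly the argument the paper gives, and the rest of your write-up is correct.
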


\begin{proof}
    Let $W\in\mathcal{W}_0$. We shall exhibit a sequence of graphs $G_1,G_2,\dots$ such that $W_{G_1},W_{G_2},\dots$ converges to $W$ and $\lim_{n\rightarrow\infty}\overline{\mathop{\mathrm{cd}}}(W_n)=\overline{\mathop{\mathrm{cd}}}(W),\ \lim_{n\rightarrow\infty}\mathop{\mathrm{cd}}(W_n)=\mathop{\mathrm{cd}}(W)$. Suppose for a moment that such a sequence exists. Then, it must be the case that $\overline{\mathop{\mathrm{cd}}}$ and the functional $\overline{g}$ mentioned in Theorem~\ref{teo:estimable} are one and the same, and this also true for $\mathop{\mathrm{cd}}$ and $g$. Since $\overline{g}$ and $g$ are continuous with respect to $d_\square$, so too are $\overline{\mathop{\mathrm{cd}}}$ and $\mathop{\mathrm{cd}}$. 
    
    In order to construct this graph sequence, we introduce some notation. For any $W\in\mathcal{W}$ and any partition $\mathcal{P}=\{S_1,S_2,\dots,S_n\}$ of the unit interval into measurable sets, let $W_\mathcal{P}^{\text{diag}}$ be the graphon such that $W_\mathcal{P}^{\text{diag}}(x,y)=W_\mathcal{P}(x,y)$ if $x$ and $y$ lie in different parts of $\mathcal{P}$, and $W_\mathcal{P}^{\text{diag}}(x,y)=0$ otherwise. The advantage that we gain from using $W_\mathcal{P}^{\text{diag}}$ instead of $W_\mathcal{P}$ is that if $\mathcal{P}$ is a partition of $[0,1]$ into intervals of the same length, then $W_\mathcal{P}^{\text{diag}}=W_G$ for some graph $G$. Note that if we let $\mathcal{P}_n$ be the partition into intervals of length $1/n$ for every positive integer $n$, then each pair of distinct points in $[0,1]$ is separated by all but a finite amount of the $P_i$'s. As a consequence, the sequence $W_{\mathcal{P}_1},W_{\mathcal{P}_2},\dots$ converges to $W$ almost everywhere. Observe that for every point $(x,y)\in[0,1]^2$ with $x\neq y$ there exists an integer $N$ such that $W_{\mathcal{P}_n}(x,y)=W_{\mathcal{P}_n}^{\text{diag}}(x,y)$ whenever $n\geq N$. This implies that the sequence $W_{\mathcal{P}_1}^\text{diag},W_{\mathcal{P}_2}^\text{diag},\dots$ converges to $W$ almost everywhere too. Let $G_1,G_2,\dots$ be edge-weighted graphs such that $W_{G_n}=W_{\mathcal{P}_n}^\text{diag}$ for every $n$. Then, $W_{G_1},W_{G_2},\dots$ converges to $W$ almost everywhere and Lemma~\ref{teo:pointwise} implies that $\lim_{n\rightarrow\infty}\overline{\mathop{\mathrm{cd}}}(W_n)=\overline{\mathop{\mathrm{cd}}}(W)$ and $\lim_{n\rightarrow\infty}\mathop{\mathrm{cd}}(W_n)=\mathop{\mathrm{cd}}(W)$. Since convergence almost everywhere implies convergence with respect to $d_\square$, the result follows.
\end{proof}

A graphon parameter $g$ is said to be \textit{invariant} if $g(W_1)=g(W_2)$ whenever $W_1$ and $W_2$ are weakly isomorphic. As an immediate consequence of the above theorem, we get that $\overline{\mathop{\mathrm{cd}}}$ and $\mathop{\mathrm{cd}}$ are invariant. 

\begin{corollary}\label{teo:invariant}
    The crossing density and the rectilinear crossing density of graphons are invariant. 
\end{corollary}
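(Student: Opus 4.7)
The plan is to derive the corollary immediately from the second half of Theorem~\ref{teo:continuous}. Let $W_1, W_2 \in \mathcal{W}_0$ be weakly isomorphic, meaning $\delta_\square(W_1, W_2) = 0$; I want to conclude that $\overline{\mathop{\mathrm{cd}}}(W_1) = \overline{\mathop{\mathrm{cd}}}(W_2)$ and $\mathop{\mathrm{cd}}(W_1) = \mathop{\mathrm{cd}}(W_2)$.

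First I would invoke the standard fact recorded in Subsection~\ref{sec:cutdistance} that for every graphon there is a convergent sequence of edge weighted graphs $G_1, G_2, \dots$, with $|V(G_n)| \to \infty$, such that $W_{G_n} \to W_1$ in $\delta_\square$. Because $\delta_\square$ is a pseudometric satisfying the triangle inequality, the assumption $\delta_\square(W_1, W_2) = 0$ forces the same sequence to also satisfy $W_{G_n} \to W_2$ in $\delta_\square$. Thus we have a single convergent graph sequence whose underlying graphon sequence approaches both $W_1$ and $W_2$.

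Now I apply Theorem~\ref{teo:continuous} twice: once with the limit taken as $W_1$ and once with the limit taken as $W_2$. Each application asserts that $\lim_{n\to\infty} \overline{\mathop{\mathrm{cd}}}(G_n)$ exists and equals the rectilinear crossing density of the corresponding limit graphon. Since the left-hand side depends only on the sequence $(G_n)$, the two values must coincide, giving $\overline{\mathop{\mathrm{cd}}}(W_1) = \overline{\mathop{\mathrm{cd}}}(W_2)$. The identical reasoning with $\mathop{\mathrm{cd}}$ in place of $\overline{\mathop{\mathrm{cd}}}$ yields $\mathop{\mathrm{cd}}(W_1) = \mathop{\mathrm{cd}}(W_2)$, which completes the argument.

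There is no real obstacle, since all the substantive work has already been done in Theorem~\ref{teo:continuous}. It is worth pointing out, though, that the \emph{first} half of Theorem~\ref{teo:continuous} (continuity with respect to the labeled cut norm $d_\square$) would \emph{not} suffice on its own, because weak isomorphism is defined via $\delta_\square$ rather than $d_\square$ and $d_\square$ is not invariant under relabelings by measure preserving maps. The decisive input is the graph-sequence characterization in the second half of the theorem, which pins down the value of each of the two densities at any graphon as a limit along an arbitrary approximating graph sequence; such sequences obviously cannot tell weakly isomorphic graphons apart.
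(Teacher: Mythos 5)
Your proof is correct and fills in exactly the argument the paper leaves implicit when it calls the corollary an ``immediate consequence'' of Theorem~\ref{teo:continuous}: approximate both weakly isomorphic graphons by the same convergent graph sequence of growing order, apply the second half of the theorem to each, and identify the two limits. Your remark that the first half of the theorem (continuity in $d_\square$) would not suffice by itself is also accurate, since one would additionally need invariance of $\mathop{\mathrm{cd}}$ and $\overline{\mathop{\mathrm{cd}}}$ under composition with measure preserving bijections before passing from $d_\square$ to $\delta_\square$.
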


We point out that this corollary can also be deduced in a more direct manner without having to recur to crossing densities of graphs and Theorem~\ref{teo:estimable}.

\subsection{Some other properties of the crossing densities}

Let $W^p$ denote the constant graphon  defined by $W^p(x,y)=p$ for all $x,y\in[0,1]$. Theorem~\ref{teo:continuous} also has the following corollary, which ties together the crossing densities with some of the problems we briefly mentioned during the introduction.

\begin{corollary}\label{teo:W1}
    We have that \[\lim_{n\rightarrow\infty}\frac{\overline{\mathop{\mathrm{cr}}}(K_n)}{\binom{n}{4}}=24\lim_{n\rightarrow\infty}\overline{\mathop{\mathrm{cd}}}(K_n)=24\overline{\mathop{\mathrm{cd}}}(W^1)\] and, similarly \[\lim_{n\rightarrow\infty}\frac{\mathop{\mathrm{cr}}(K_n)}{\binom{n}{4}}=24\mathop{\mathrm{cd}}(W^1).\]
\end{corollary}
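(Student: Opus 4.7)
The first equality in each of the two displayed equations is a routine arithmetic fact, so most of the work lies in proving the second equality. By the definitions given in Section~\ref{sec:weightedcrossingnumber}, when we view $K_n$ as an edge-weighted graph with every edge of weight $1$ we have $\overline{\mathop{\mathrm{cd}}}(K_n)=\overline{\mathop{\mathrm{cr}}}(K_n)/n^{4}$ and $\mathop{\mathrm{cd}}(K_n)=\mathop{\mathrm{cr}}(K_n)/n^{4}$. Since $\binom{n}{4}/n^{4}\to 1/24$, we can write
$$\frac{\overline{\mathop{\mathrm{cr}}}(K_n)}{\binom{n}{4}}=\frac{n^{4}}{\binom{n}{4}}\cdot\overline{\mathop{\mathrm{cd}}}(K_n),$$
and the same identity with $\mathop{\mathrm{cr}}$ and $\mathop{\mathrm{cd}}$. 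So the leftmost limits will exist and satisfy the first equalities as soon as the rightmost limits exist and equal $\overline{\mathop{\mathrm{cd}}}(W^{1})$ and $\mathop{\mathrm{cd}}(W^{1})$.

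To establish those second equalities, I plan to apply Theorem~\ref{teo:continuous} to the sequence $K_{1},K_{2},\dots$ of complete graphs. The number of vertices tends to infinity with $n$, so it suffices to check that this sequence converges (in the cut distance) to $W^{1}$. The graphon $W_{K_n}$ obtained from the construction of Section~\ref{sec:cutdistance} equals $0$ on the union of the $n$ diagonal squares $I_{i}\times I_{i}$ and equals $1$ elsewhere; the total Lebesgue measure of the diagonal squares is $1/n$, so
$$\|W_{K_n}-W^{1}\|_{1}\leq \frac{1}{n}.$$
Because $\|\cdot\|_{\square}\leq\|\cdot\|_{1}$, this gives $d_{\square}(W_{K_n},W^{1})\to 0$, so the sequence $W_{K_n}$ converges to $W^{1}$ with respect to the cut distance (and indeed almost everywhere). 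Theorem~\ref{teo:continuous} then yields $\lim_{n\to\infty}\overline{\mathop{\mathrm{cd}}}(K_n)=\overline{\mathop{\mathrm{cd}}}(W^{1})$ and $\lim_{n\to\infty}\mathop{\mathrm{cd}}(K_n)=\mathop{\mathrm{cd}}(W^{1})$, which is exactly what we needed.

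There is no real obstacle: the corollary is a packaging result that combines the definition of the normalized crossing densities, the elementary asymptotic $\binom{n}{4}\sim n^{4}/24$, and the main continuity theorem of the subsection applied to the trivially convergent sequence of complete graphs. If anything, the only point that deserves explicit mention is that $K_n$ is being treated as an edge-weighted graph (with all weights equal to $1$) so that the normalized quantity $\overline{\mathop{\mathrm{cd}}}(K_n)=\overline{\mathop{\mathrm{cr}}}(K_n)/n^{4}$ lines up with the definitions from Section~\ref{sec:weightedcrossingnumber}.
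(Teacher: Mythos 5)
Your proposal is correct and is essentially the argument the paper has in mind: the paper presents this as an immediate consequence of Theorem~\ref{teo:continuous} with no written proof, and your derivation simply fills in the routine details (the identification $\overline{\mathop{\mathrm{cd}}}(K_n)=\overline{\mathop{\mathrm{cr}}}(K_n)/n^4$, the asymptotic $\binom{n}{4}\sim n^4/24$, and the verification that $W_{K_n}\to W^1$ via the $L^1$-norm bound $\|W_{K_n}-W^1\|_1\le 1/n$, which dominates the cut norm). Nothing is missing and no step would fail.
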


Clearly, if $c\geq0$ and $W_1$ and $W_2$ are graphons with $W_1=cW_2$, then $\mathop{\mathrm{cd}}(W_1)=c^2\cdot\mathop{\mathrm{cd}}(W_2)$ and $\overline{\mathop{\mathrm{cd}}}(W_1)=c^2\cdot\overline{\mathop{\mathrm{cd}}}(W_2)$. Crossing numbers of random graphs have received considerable attention (see, for example,~\cite{cr_random}). A sequence $G_1,G_2,\dots$ of edge-weighted graphs is said to be \textit{quasi-random with density} $p$ if the orders of the graphs go to infinity with $n$ and $W_{G_1},W_{G_2},\dots$ converges to $W^p$. There are many other equivalent (and often more practical) definitions of quasi-randomness (see~\cite{quasi-random,lovaszlimits}), but this one is the best suited for our purposes. The results in~\cite{approximatingrect} directly imply that the rectilinear crossing densities of any quasi-random sequence of graphs converge. We can show that the same is true for the crossing density. In fact, if we write $\overline{K}=\overline{\mathop{\mathrm{cd}}}(W^1)$ and $K=\mathop{\mathrm{cd}}(W^1)$, then $\mathop{\mathrm{cd}}(W^p)=Kp^2$ and $\overline{\mathop{\mathrm{cd}}}(W^p)=\overline{K}p^2$, and so Theorem~\ref{teo:continuous} yields the following.

\begin{corollary}
    Let $p\in[0,1]$. Then, for any sequence $G_1,G_2,\dots$ of edge-weighted graphs that is quasi-random with density $p$, we have that $\lim_{n\rightarrow\infty}\overline{\mathop{\mathrm{cd}}}(G_n)=\overline{K}p^2$ and $\lim_{n\rightarrow\infty}\mathop{\mathrm{cd}}(G_n)=Kp^2$.
\end{corollary}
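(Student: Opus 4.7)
The plan is to combine the observation that $W^p = p\cdot W^1$ with the quadratic scaling of the crossing densities under multiplication by a scalar, and then invoke Theorem~\ref{teo:continuous}.

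The first step is to verify the scaling identities $\overline{\mathop{\mathrm{cd}}}(cW) = c^2\,\overline{\mathop{\mathrm{cd}}}(W)$ and $\mathop{\mathrm{cd}}(cW) = c^2\,\mathop{\mathrm{cd}}(W)$ for every $c \in [0,1]$ and every $W \in \mathcal{W}_0$; this is the fact the author mentions in passing just before the statement of the corollary. From the definitions in Subsection~\ref{sec:densities}, the integrands used to compute $\overline{\mathop{\mathrm{cd}}}(W,f)$ and $\mathop{\mathrm{cd}}(W,\mathcal{D})$ both contain the product $W(x_1,x_2)W(x_3,x_4)$, so replacing $W$ by $cW$ scales the integral by $c^2$ for every $f\in\mathcal{F}$ and every $\mathcal{D}\in\mathcal{D}_\mathcal{W}$. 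Passing to the infimum over $\mathcal{F}$ and $\mathcal{D}_\mathcal{W}$ (which are independent of $c$) gives the claimed identities. Applying them to $W=W^1$ with $c=p$ yields $\overline{\mathop{\mathrm{cd}}}(W^p) = p^2\overline{K}$ and $\mathop{\mathrm{cd}}(W^p) = p^2 K$.

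The second and final step is to unfold the definition of quasi-randomness and invoke continuity. By hypothesis, $G_1,G_2,\dots$ is quasi-random with density $p$, which means that $|V(G_n)|\to\infty$ and $W_{G_1},W_{G_2},\dots$ converges to $W^p$ with respect to $\delta_\square$. Equivalently, the graph sequence $G_1,G_2,\dots$ is convergent with limit $W^p$, so Theorem~\ref{teo:continuous} gives $\lim_{n\to\infty}\overline{\mathop{\mathrm{cd}}}(G_n) = \overline{\mathop{\mathrm{cd}}}(W^p)$ and $\lim_{n\to\infty}\mathop{\mathrm{cd}}(G_n) = \mathop{\mathrm{cd}}(W^p)$. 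Substituting the values computed in the previous step yields $\lim_{n\to\infty}\overline{\mathop{\mathrm{cd}}}(G_n) = \overline{K}p^2$ and $\lim_{n\to\infty}\mathop{\mathrm{cd}}(G_n) = Kp^2$, as desired.

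There is essentially no hard step here: the corollary is purely a combination of the scaling of the crossing density functionals under $W\mapsto cW$ and the already established continuity result (Theorem~\ref{teo:continuous}). The only point requiring a moment of care is making sure that the scaling argument applies at the level of the infimum, which is immediate because the classes $\mathcal{F}$ and $\mathcal{D}_\mathcal{W}$ over which the infima are taken do not depend on the graphon.
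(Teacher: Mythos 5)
Your proposal is correct and follows exactly the same route as the paper: the author notes the quadratic scaling $\mathop{\mathrm{cd}}(cW)=c^2\mathop{\mathrm{cd}}(W)$ and $\overline{\mathop{\mathrm{cd}}}(cW)=c^2\overline{\mathop{\mathrm{cd}}}(W)$, applies it with $W=W^1$ and $c=p$ to get $\mathop{\mathrm{cd}}(W^p)=Kp^2$ and $\overline{\mathop{\mathrm{cd}}}(W^p)=\overline{K}p^2$, and then invokes Theorem~\ref{teo:continuous}. Your write-up simply fills in the short calculation that the paper labels as ``Clearly,'' so there is no substantive difference.
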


Below, we obtain some additional properties of the crossing densities. The next observation is a direct consequence of the definitions of $\mathop{\mathrm{cd}}$ and $\overline{\mathop{\mathrm{cd}}}$.

\begin{observation}\label{teo:superaditive}
    Let $W$, $W_1$ and $W_2$ be graphons such that $W=W_1+W_2$. Then $\overline{\mathop{\mathrm{cd}}}(W)\geq\overline{\mathop{\mathrm{cd}}}(W_1)+\overline{\mathop{\mathrm{cd}}}(W_2)$ and $\mathop{\mathrm{cd}}(W)\geq\mathop{\mathrm{cd}}(W_1)+\mathop{\mathrm{cd}}(W_2)$.
\end{observation}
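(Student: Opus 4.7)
The plan is to exploit the fact that the integrands in the definitions of $\overline{\mathop{\mathrm{cd}}}(W,f)$ and $\mathop{\mathrm{cd}}(W,\mathcal{D})$ are \emph{bilinear} in the graphon slot: the kernel $W$ appears twice (once as $W(x_1,x_2)$ and once as $W(x_3,x_4)$), while the drawing-dependent factor $I_f$ or $I_\mathcal{D}$ is held fixed. This means that for a single drawing used simultaneously for $W$, $W_1$ and $W_2$, the splitting $W=W_1+W_2$ expands the product $W(x_1,x_2)W(x_3,x_4)$ into four nonnegative pieces.

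Concretely, I would first fix an arbitrary $f\in\mathcal{F}$ and write
\[
W(x_1,x_2)W(x_3,x_4)=\sum_{i,j\in\{1,2\}} W_i(x_1,x_2)\,W_j(x_3,x_4),
\]
which after multiplying by $I_f(x_1,x_2,x_3,x_4)/8$ and integrating gives
\[
\overline{\mathop{\mathrm{cd}}}(W,f)=\overline{\mathop{\mathrm{cd}}}(W_1,f)+\overline{\mathop{\mathrm{cd}}}(W_2,f)+R(f),
\]
where $R(f)$ is the sum of the two cross terms. Since $W_1,W_2\ge 0$ (they are graphons) and $I_f\ge 0$ by construction, $R(f)\ge 0$. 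Therefore
\[
\overline{\mathop{\mathrm{cd}}}(W,f)\ge \overline{\mathop{\mathrm{cd}}}(W_1,f)+\overline{\mathop{\mathrm{cd}}}(W_2,f)\ge \overline{\mathop{\mathrm{cd}}}(W_1)+\overline{\mathop{\mathrm{cd}}}(W_2),
\]
and taking the infimum over $f\in\mathcal{F}$ on the left-hand side yields the first inequality. The argument for $\mathop{\mathrm{cd}}$ is identical: replace $f$ by $\mathcal{D}\in\mathcal{D}_\mathcal{W}$, $I_f$ by $I_\mathcal{D}$, which is again nonnegative (in fact $\{0,1\}$-valued), expand, and take the infimum over $\mathcal{D}$.

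There is essentially no obstacle here beyond checking that the two mixed integrals are well defined and nonnegative, and that nothing in the definitions prevents us from feeding the same $f$ (or $\mathcal{D}$) to three different graphons — which is immediate because $\mathcal{F}$ and $\mathcal{D}_\mathcal{W}$ are defined independently of the graphon being drawn. The only mild subtlety worth flagging is that the hypothesis ``$W_1+W_2$ is a graphon'' is used only to guarantee that $W_1,W_2\in[0,1]$ individually; all that the argument actually needs is $W_1,W_2\ge 0$ almost everywhere.
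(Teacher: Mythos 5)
Your argument is correct and is essentially identical to the paper's own proof: both fix an arbitrary $f\in\mathcal{F}$ (resp. $\mathcal{D}\in\mathcal{D}_\mathcal{W}$), expand the bilinear product $W\otimes W$ into four nonnegative pieces, discard the cross terms, and then pass from $\overline{\mathop{\mathrm{cd}}}(W_i,f)\geq\overline{\mathop{\mathrm{cd}}}(W_i)$ to the infimum over $f$.
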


\begin{proof}
    We only prove the first inequality, as the second one can be deduced almost identically. For any $f\in\mathcal{F}$, we have that \[\overline{\mathop{\mathrm{cd}}}(W,f)=\frac{1}{8}\int_{[0,1]^4}[W_1(x_1,x_2)+W_2(x_1,x_2)][W_1(x_3,x_4)+W_2(x_3,x_4)]I_f(x_1,x_2,x_3,x_4)\prod_{i=1}^4 dx_i\]  
    \[\geq\frac{1}{8}\int_{[0,1]^4}[W_1(x_1,x_2)W_1(x_3,x_4)+W_2(x_1,x_2)W_2(x_3,x_4)]I_f(x_1,x_2,x_3,x_4)\prod_{i=1}^4 dx_i\] 
    \[=\overline{\mathop{\mathrm{cd}}}(W_1,f)+\overline{\mathop{\mathrm{cd}}}(W_2,f)\geq\overline{\mathop{\mathrm{cd}}}(W_1)+\overline{\mathop{\mathrm{cd}}}(W_2),\] and the result follows.
\end{proof}

This can be used to prove the following more interesting result.

\begin{theorem}
Let $W$ be a graphon and $\mathcal{P}=\{S_1,S_2,\dots,S_n\}$ a partition of the unit interval into measurable sets. Then, we have that $\overline{\mathop{\mathrm{cd}}}(W)\leq\overline{\mathop{\mathrm{cd}}}(W_\mathcal{P})$ and $\mathop{\mathrm{cd}}(W)\leq\mathop{\mathrm{cd}}(W_\mathcal{P}).$
\end{theorem}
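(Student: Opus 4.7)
The plan is to produce a random drawing of $W$ by precomposing a near-optimal drawing of $W_\mathcal{P}$ with a measure-preserving bijection of $[0,1]$ that shuffles within each block $S_i$, and then average. For each realization, the crossing density of the random drawing is at least $\overline{\mathop{\mathrm{cd}}}(W)$, while the expected value converges to the crossing density of the fixed drawing of $W_\mathcal{P}$.

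Given $\varepsilon > 0$, I first pick $f \in \mathcal{F}$ with $\overline{\mathop{\mathrm{cd}}}(W_\mathcal{P}, f) \le \overline{\mathop{\mathrm{cd}}}(W_\mathcal{P}) + \varepsilon$. For each integer $m \ge 1$, I subdivide every $S_i$ into $m$ consecutive sub-intervals of equal measure, pick independently for each $i$ a uniformly random permutation $\pi_{m,i}$ of these sub-intervals, and let $\phi_m$ be the block-preserving measure-preserving bijection of $[0,1]$ they induce (affine on each sub-interval). For every realization, $f \circ \phi_m \in \mathcal{F}$, so $\overline{\mathop{\mathrm{cd}}}(W, f \circ \phi_m) \ge \overline{\mathop{\mathrm{cd}}}(W)$, and taking expectations gives $\overline{\mathop{\mathrm{cd}}}(W) \le \mathbb{E}[\overline{\mathop{\mathrm{cd}}}(W, f \circ \phi_m)]$ for every $m$. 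A change of variables using that $\phi_m$ is measure-preserving yields
\[
\mathbb{E}\bigl[\overline{\mathop{\mathrm{cd}}}(W, f \circ \phi_m)\bigr] \;=\; \frac{1}{8}\int I_f(y)\,\mathbb{E}\bigl[(W \otimes W)(\phi_m^{-1}(y))\bigr]\, dy,
\]
where $(W \otimes W)(y_1, y_2, y_3, y_4) := W(y_1, y_2)\,W(y_3, y_4)$.

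The plan is then to show that $\mathbb{E}[(W \otimes W)(\phi_m^{-1}(\cdot))] \to W_\mathcal{P} \otimes W_\mathcal{P}$ in $L^1([0,1]^4)$ as $m \to \infty$. Granting this, $|I_f| \le 1$ and bounded convergence give $\mathbb{E}[\overline{\mathop{\mathrm{cd}}}(W, f \circ \phi_m)] \to \tfrac{1}{8}\int I_f \cdot W_\mathcal{P} \otimes W_\mathcal{P}\, dy = \overline{\mathop{\mathrm{cd}}}(W_\mathcal{P}, f)$; combined with the preceding lower bound and letting $\varepsilon \to 0$, this yields $\overline{\mathop{\mathrm{cd}}}(W) \le \overline{\mathop{\mathrm{cd}}}(W_\mathcal{P})$. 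For the $L^1$-convergence of the kernel: on the set of $y$ whose four coordinates lie in distinct sub-intervals of their respective blocks (a set of measure at least $1 - 6/m$), the inner expectation is the average of $W \otimes W$ over a product of equidistributing finite grids in $\prod_k S_{i(y_k)}$. For continuous $W$ this Riemann sum converges pointwise to the block average $W_\mathcal{P} \otimes W_\mathcal{P}(y)$, and bounded convergence upgrades this to $L^1$. For general $W \in L^\infty$, one approximates $W \otimes W$ in $L^1$ by continuous functions and uses that $h \mapsto \mathbb{E}[h(\phi_m^{-1}(\cdot))]$ is an $L^1$-contraction, since $\phi_m^{-1}$ is measure-preserving.

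The proof of $\mathop{\mathrm{cd}}(W) \le \mathop{\mathrm{cd}}(W_\mathcal{P})$ is essentially identical, with $f \in \mathcal{F}$ replaced by $\mathcal{D} \in \mathcal{D}_\mathcal{W}$ and $f \circ \phi_m$ by $\mathcal{D}_m(x, y) := \mathcal{D}(\phi_m(x), \phi_m(y))$, which is readily checked to lie in $\mathcal{D}_\mathcal{W}$ via properties (1)--(4) in the definition of a simple graphon drawing. The main obstacle is the $L^1$-convergence of the kernel in the previous paragraph: entirely routine for continuous (or Riemann-integrable) $W$ via classical convergence of Riemann sums, and extended to arbitrary bounded measurable $W$ by the standard density-plus-contraction argument sketched above.
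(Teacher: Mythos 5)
Your proof is correct and takes a genuinely different route from the paper's. The paper averages the \emph{graphon}: assuming (via Corollary~\ref{teo:invariant}) that the $S_i$ are intervals, it introduces the block-wise ergodic doubling map $\phi$, forms Birkhoff averages $W_n=\tfrac{1}{n}\sum_{t\le n}W^{\phi^t}$, invokes Observation~\ref{teo:superaditive} to lower-bound $\overline{\mathop{\mathrm{cd}}}(W_n)$ by $\overline{\mathop{\mathrm{cd}}}(W)$, and passes to the limit via the pointwise ergodic theorem together with Theorem~\ref{teo:pointwise}. You instead average the \emph{drawing}: you fix one near-optimal $f$ for $W_\mathcal{P}$ and precompose it with random block-preserving permutations $\phi_m$, so the lower bound $\overline{\mathop{\mathrm{cd}}}(W,f\circ\phi_m)\ge\overline{\mathop{\mathrm{cd}}}(W)$ is immediate for every realization. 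This buys several things: your argument needs neither Observation~\ref{teo:superaditive} nor Theorem~\ref{teo:pointwise} (and hence none of the Banach--Alaoglu machinery behind it); the only real content is the $L^1$-convergence $\mathbb{E}\bigl[(W\otimes W)\circ\phi_m^{-1}\bigr]\to W_\mathcal{P}\otimes W_\mathcal{P}$, which you reduce, via the contraction property of measure-preserving pullbacks, to Riemann-sum convergence for continuous $W$. Averaging at the level of drawings rather than graphons also sidesteps a real scaling subtlety in the paper's use of superadditivity: because $\overline{\mathop{\mathrm{cd}}}(cW)=c^2\,\overline{\mathop{\mathrm{cd}}}(W)$, iterating Observation~\ref{teo:superaditive} on $W_n=\tfrac{1}{n}\sum W^{\phi^t}$ only yields a lower bound of order $\overline{\mathop{\mathrm{cd}}}(W)/n$, not the claimed $\overline{\mathop{\mathrm{cd}}}(W_n)\ge\overline{\mathop{\mathrm{cd}}}(W)$; to make that step precise one is led to redo it essentially as you do, by averaging $(W\otimes W)\circ\phi^{-t}$ against a fixed $I_f$. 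One small point to tidy in your write-up: the $S_i$ are only assumed measurable, so ``consecutive sub-intervals of $S_i$'' needs either the reduction to intervals via Corollary~\ref{teo:invariant} (as the paper does) or an explicit measure-preserving identification $S_i\cong[0,\lambda(S_i))$ to transport your equal-measure subdivision and the uniform permutations.
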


\begin{proof}
    The technique below was also used in the proof of Proposition 14.13 in~\cite{lovaszlimits}.
    
    By Lemma~\ref{teo:invariant}, we may assume that each $S_i$ is an interval. For every $i$, let $a_i$ and $b_i$ be the endpoints of $S_i$ and let $\phi_i:S_i\rightarrow S_i$ be the measure preserving map defined by $\phi_i(x)=a_i+[2(x-a_i)(\mod b_i-a_i)]$. Let $\phi:[0,1]\rightarrow[0,1]$ be the measure preserving map defined by $\phi|_{S_i}=\phi_i$. The point of this is that the map from $S_i\times S_j$ to itself given by $\phi(x,y)=(\phi(x),\phi(y))$ is ergodic for every $i,j$. For every positive integer $n$, define a graphon $W_n$ with \[W_n(x,y)=\frac{1}{n}\sum_{t=0}^{n}W(\phi^t(x),\phi^t(y)).\] By Observation~\ref{teo:superaditive}, \[\overline{\mathop{\mathrm{cd}}}(W_n)\geq\frac{1}{n}\sum_{t=0}^{n}\overline{\mathop{\mathrm{cd}}}(W^{\phi^t})=\overline{\mathop{\mathrm{cd}}}(W).\] The sequence $W_1,W_2,\dots$ converges to $W_\mathcal{P}$ almost everywhere by the choice of $\phi$, hence $\overline{\mathop{\mathrm{cd}}}(W_\mathcal{P})=\lim_{n\rightarrow\infty}\overline{\mathop{\mathrm{cd}}}(W_n)\geq\overline{\mathop{\mathrm{cd}}}(W)$. The result for crossing densities follows from an analogous argument. 
\end{proof}

\begin{corollary}\label{teo:Wpmax}
    Let $p\in[0,1]$. Among all graphons $W$ with $||W||_1=\int_{[0,1]^2}W(x,y)\ dx\ dy=p$, $W^p$ achieves the largest crossing density, as well as the largest rectilinear crossing density.
\end{corollary}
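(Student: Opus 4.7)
The plan is to derive this as an immediate consequence of the preceding theorem (which states $\overline{\mathop{\mathrm{cd}}}(W)\le\overline{\mathop{\mathrm{cd}}}(W_\mathcal{P})$ and $\mathop{\mathrm{cd}}(W)\le\mathop{\mathrm{cd}}(W_\mathcal{P})$ for any partition $\mathcal{P}$ of $[0,1]$ into measurable sets). The key observation is simply that the trivial partition $\mathcal{P}=\{[0,1]\}$ produces the constant graphon equal to the average of $W$.

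More precisely, I would take $\mathcal{P}_0=\{[0,1]\}$, which has a single part $S_1=[0,1]$ of measure $1$. By the definition of $W_\mathcal{P}$ given in Section~\ref{sec:cutdistance}, for any $x,y\in[0,1]$ we have
\[W_{\mathcal{P}_0}(x,y)=\frac{1}{\lambda(S_1)^2}\int_{S_1\times S_1}W(u,v)\,du\,dv=\int_{[0,1]^2}W(u,v)\,du\,dv=\|W\|_1=p,\]
so $W_{\mathcal{P}_0}=W^p$ identically. Applying the previous theorem with this choice of $\mathcal{P}$ then yields $\overline{\mathop{\mathrm{cd}}}(W)\le\overline{\mathop{\mathrm{cd}}}(W^p)$ and $\mathop{\mathrm{cd}}(W)\le\mathop{\mathrm{cd}}(W^p)$, which is precisely the statement of the corollary.

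There is no real obstacle here; the only thing to double-check is that the definition of $W_\mathcal{P}$ makes sense for the one-part partition (it does) and that taking $\mathcal{P}_0$ indeed gives a graphon whose constant value is $p=\|W\|_1$ (it does, by the $L^1$ normalization hypothesis on $W$). The corollary is essentially a reformulation of the previous theorem in the special case of the coarsest partition.
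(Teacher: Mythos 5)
Your proposal is correct and is exactly the intended argument: the paper states this as an immediate corollary of the preceding theorem, and the one missing detail you supply (taking the coarsest partition $\mathcal{P}_0=\{[0,1]\}$ so that $W_{\mathcal{P}_0}\equiv\|W\|_1=p$, i.e.\ $W_{\mathcal{P}_0}=W^p$) is precisely the right one.
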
 

It might seem inconvenient that the definitions of $\mathcal{F}$ and $\mathcal{D}_\mathcal{W}$ allow for "degenerate" configurations. However, it is possible to add several more restrictions to these families without altering the crossing densities. We provide an example of this.

We say that a function $f\in\mathcal{F}$ is \textit{nice} if it is measure preserving and bounded.

\begin{theorem}\label{teo:generic}
Let $W$ be a graphon. Then, \[\overline{\mathop{\mathrm{cd}}}(W)=\inf_{f}\overline{\mathop{\mathrm{cd}}}(W,f),\] where $f$ ranges over all nice elements of $\mathcal{F}$.
\end{theorem}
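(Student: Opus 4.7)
The inequality $\overline{\mathop{\mathrm{cd}}}(W)\leq\inf_{f\text{ nice}}\overline{\mathop{\mathrm{cd}}}(W,f)$ is immediate since every nice function lies in $\mathcal{F}$. For the reverse direction, the plan is to fix an arbitrary $f\in\mathcal{F}$ and $\varepsilon>0$, and construct a nice $g$ with $\overline{\mathop{\mathrm{cd}}}(W,g)\leq\overline{\mathop{\mathrm{cd}}}(W,f)+\varepsilon$. The natural strategy is a perturbation argument: I would fix once and for all a reference nice function $h\colon[0,1]\to[0,1]^2$ (which exists via a measure-preserving Peano-type construction), and consider the one-parameter family $g_\delta(x)=f(x)+\delta\, h(x)$ for small $\delta>0$.

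First I would check that $g_\delta$ is itself nice, at least for most (in the sense of Lebesgue measure) $\delta$. Boundedness is automatic since $f$ and $h$ are both bounded. The measure-preserving requirement is where one must take care: because $h$ spreads its values across a set of positive two-dimensional measure, adding $\delta h$ to $f$ smears out any concentration that $f_{\ast}\lambda$ may have on lines or on lower-dimensional sets, and a short Fubini-type argument (possibly after slightly perturbing $\delta$) gives the regularity required of a nice function.

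Second I would establish the convergence $\overline{\mathop{\mathrm{cd}}}(W,g_\delta)\to\overline{\mathop{\mathrm{cd}}}(W,f)$ as $\delta\to 0^{+}$. The key observation is that for a quadruple $(x_1,x_2,x_3,x_4)$ at which the four points $f(x_i)$ are in \emph{generic position}---meaning the closed segments $\mathrm{conv}\{f(x_1),f(x_2)\}$ and $\mathrm{conv}\{f(x_3),f(x_4)\}$ either cross transversally or are disjoint and bounded apart---the indicator $I_{g_\delta}(x_1,x_2,x_3,x_4)$ equals $I_f(x_1,x_2,x_3,x_4)$ for all sufficiently small $\delta$. Thus $I_{g_\delta}\to I_f$ pointwise on the generic locus, and bounded convergence closes the argument provided the non-generic locus is negligible.

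The main obstacle I anticipate is precisely that the non-generic locus of $f$ need not have measure zero: for instance $f$ could be constant on a positive-measure set, forcing many nearby segments to be tangent or coincident. To handle this I would pursue one of two routes. Route (i): give a direct geometric estimate showing that the measure of the set of quadruples where $I_{g_\delta}\neq I_f$ shrinks to $0$ as $\delta\to 0$, by exploiting the fact that $\delta h$ is itself measure-preserving onto a shrinking square and hence randomizes the configuration of the four image points in the $\delta$-parameter direction. Route (ii): use Lemma~\ref{teo:weakclosure} to replace $\overline{\mathop{\mathrm{cd}}}(W,f)$ by an integral against the weak-$\ast$ limit indicator $\overline{I}_W$, and then diagonalize, showing that $\overline{I}_W$ is a weak-$\ast$ limit of indicators $I_{g_n}$ with $g_n=f_n+\delta_n h$ nice. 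In either approach, the heart of the matter is a density-style claim: that nice functions can approximate, in the weak sense relevant to $\overline{\mathop{\mathrm{cd}}}(W,\cdot)$, every element of $\mathcal{F}$.
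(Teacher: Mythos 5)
The inequality $\overline{\mathop{\mathrm{cd}}}(W)\leq\inf_{f\text{ nice}}\overline{\mathop{\mathrm{cd}}}(W,f)$ is indeed trivial, and you correctly identify that the work lies in the other direction. However, the perturbation $g_\delta=f+\delta h$ is generally \emph{not} a nice function, and this gap cannot be filled by a ``short Fubini-type argument.'' Being nice requires that the pushforward $\lambda_{g_\delta}$ be the normalized Lebesgue measure on some bounded region of positive measure (this is what is meant by ``measure preserving'' here, in line with the discussion of $\lambda_R$ and $\lambda_f$ at the end of Section~\ref{sec:graphons}). The pushforward of a \emph{sum} of two functions of the same variable is governed by the joint distribution of $(f(x),h(x))$, not by the convolution of the two marginal pushforwards, so adding $\delta h$ does not ``smear out'' $f_*\lambda$ in the way you suggest. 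A concrete failure: take $f$ supported on two values $a,b$, each on a half of $[0,1]$, with $h$ a Peano-type curve; then $\lambda_{g_\delta}$ is a sum of two uniform measures on translates of scaled images of $h|_{[0,1/2]}$ and $h|_{(1/2,1]}$, and this is not of the form $\lambda_R$ in general (the restricted $h$'s need not be measure-preserving onto anything, and overlaps produce non-constant density). The same objection applies to both of your Routes (i) and (ii), since both ultimately rely on $f_n+\delta_n h$ being nice.

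The paper sidesteps this entirely by never perturbing an arbitrary $f\in\mathcal{F}$. Instead, it first invokes the proofs of Theorems~\ref{teo:G_W} and~\ref{teo:continuous} to produce a minimizing sequence $f_n$ that is \emph{piecewise constant}, each $f_n$ arising from a rectilinear drawing of a finite graph on $n$ vertices (so $f_n$ sends each interval $I_i$ of length $1/n$ to a single point $p_i$). For such $f_n$ the construction of a nearby nice function is elementary: replace the constant restriction $f_n|_{I_i}\equiv p_i$ by a measure-preserving bijection from $I_i$ onto a disk of radius $\rho$ centered at $p_i$; taking $\rho$ small enough that the $n$ disks are pairwise disjoint, the pushforward becomes uniform on the union of disks, hence nice, while the crossing density changes by an arbitrarily small amount since only quadruples landing near degenerate configurations of the $p_i$ are affected. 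If you want to save a perturbation-style argument, you would need to either (a) restrict attention to piecewise-constant $f$ as the paper does, or (b) decouple the perturbation from $f$ by passing to a product domain $[0,1]^2$ with $g(x,y)=f(x)+\delta h(y)$, using invariance of $\overline{\mathop{\mathrm{cd}}}$ under weak isomorphism (Corollary~\ref{teo:invariant}) to justify the change of domain, and then verifying that the pushforward is a genuine convolution with a uniform measure---but (b) is a considerably heavier argument than what you sketch, and still requires care to show the resulting measure is uniform on a region rather than merely absolutely continuous.
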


\begin{proof}
    Combining the ideas in the proofs of theorems~\ref{teo:G_W} and~\ref{teo:continuous}, we can obtain a sequence $f_1,f_2,\dots$ of elements of $\mathcal{F}$, each of which arises from a rectilinear drawing of a graph (as did $f_\mathcal{D}$ in our proof that $\overline{\mathop{\mathrm{cd}}}(W_G)\leq\overline{\mathop{\mathrm{cd}}}(G)+1/n$ for every $n$ vertex graph $G$), and such that $\lim_{n\rightarrow\infty}\overline{\mathop{\mathrm{cd}}}(W,f_n)=\overline{\mathop{\mathrm{cd}}}(W)$. Each of these elements of $F$ maps each of the intervals in some partition of $[0,1]$ to a distinct point on the plane. We can rescale $f_n$ so that the points corresponding to the images of any two of these intervals are arbitrarily far away from each other. For every maximal subinterval of $[0,1]$ that is mapped to a single point by $f_n$, we can now reconstruct $f_n$ so that its restriction to the subinterval is instead measure preserving and its image is a disk. This modification can be carried out so that the rectilinear crossing density increases by an arbitrarily small amount. Repeating this process for every subinterval with the above property and then for every $n$, we arrive at a new sequence $f_1',f_2',\dots$ of nice elements of $\mathcal{F}$ such that $\lim_{n\rightarrow\infty}\overline{\mathop{\mathrm{cd}}}(W,f_n')=\overline{\mathop{\mathrm{cd}}}(W)$.
\end{proof}

We mentioned in the introduction that the crossing number of complete graphs is connected to Sylvester's four point problem. To finish this section, we describe how this connection is, in some sense, a consequence of the interplay between rectilinear crossing densities of graphons and rectilinear crossing numbers of graphs.

Given a bounded region $R\subset\mathbb{R}^2$ of positive measure, let $\lambda_R$ be the probability measure that arises from restricting the Lebesgue measure to $R$ and then normalizing. For any probability measure $P$ on the Lebesgue $\sigma$-algebra of $\mathbb{R}^2$, let $\text{c}(P)$ denote the probability that four points sampled independently from $P$ can be labeled by $a,b,c,d$ so that $I(a,b,c,d)=1$ (that is, $\text{conv}\{a,b\}\cap\text{conv}\{c,d\}\neq\emptyset$). It was shown by Scheinerman and Wilf~\cite{sylvester} (although it is worded in a slightly different manner in their paper) that \[\lim_{n\rightarrow\infty}\frac{\overline{\mathop{\mathrm{cr}}}(K_n)}{\binom{n}{4}}=\inf_{R}c(\lambda_R),\] where $R$ ranges over all bounded regions of positive measure. 

Now, for $f\in\mathcal{F}$, let $\lambda_f$ denote the corresponding pushforward probability measure on $\mathbb{R}^d$ (i.e., $\lambda_f(A)=\lambda(f^{-1}(A))$ for every measurable $A\subseteq\mathbb{R}^2$). Every probability measure of the form $\lambda_R$ corresponds to a standard probability space, and thus each $\lambda_R$ can be written as $\lambda_f$ for some nice $f\in\mathcal{F}$. Furthermore, if $f$ is nice then $\lambda_f=\lambda_R$ for some $R$. This implies that \[\inf_{f}c(\lambda_f)=\inf_{R}c(\lambda_R),\] where $f$ ranges over all nice elements of $\mathcal{F}$. On top of this, it is not hard to see that $24\overline{\mathop{\mathrm{cd}}}(W^1,f)=\text{c}(\lambda_f)$ for all nice $f$ (since the set $f^{-1}(\ell)\subset[0,1]$ has measure $0$ for every line $\ell$ on the plane). This observation, along with Theorem~\ref{teo:generic}, implies that \[24\overline{\mathop{\mathrm{cd}}}(W^1)=\inf_{R}c(\lambda_R).\] By the first part of Corollary~\ref{teo:W1}, this is actually equivalent to the result of Scheinerman and Wilf.

\section{Open problems}\label{sec:final}

\begin{itemize}
    \item Unlike the algorithm from~\cite{approximatingrect}, our drawing algorithm in Theorem~\ref{teo:main} is not deterministic.

\begin{problem}
    Is there a deterministic polynomial-time algorithm that for every $n$-vertex graph $G$ produces a drawing of $G$ with $\mathop{\mathrm{cr}}(G)+o(n^4)$ crossings?
\end{problem}

\item While Theorem~\ref{teo:continuous} shows that the graphon parameters $\mathop{\mathrm{cd}}$ and $\overline{\mathop{\mathrm{cd}}}$ are indeed the "correct" discrete analogs of the rectilinear crossing density and the crossing density of graphs, there are several ways in which we could have defined them. Indeed, as we exemplified at the end of the previous section, minor tweaks to the definitions of $\mathcal{F}$ and $\mathcal{D}_\mathcal{W}$ will often have no influence whatsoever on the graphon parameters $\mathop{\mathrm{cd}}$ and $\overline{\mathop{\mathrm{cd}}}$ (see Theorem~\ref{teo:generic}). Since the definition of $\mathcal{D}_\mathcal{W}$ is rather artificial (the set is made up of all the simple graphon drawings such that $I_\mathcal{D}$ measurable), it makes sense to ask whether this family can be substituted by a more natural set of functions from $[0,1]^2$ to $\mathcal{C}$ without altering $\mathop{\mathrm{cd}}$.

\item As mentioned near the end of Section~\ref{sec:densities}, we do not know whether for every graphon $W$ there is an $f\in\mathcal{F}$ such that $\overline{\mathop{\mathrm{cr}}}(W)=\overline{\mathop{\mathrm{cr}}}(W,f)$ or a $\mathcal{D}\in\mathcal{D}_\mathcal{W}$ such that $\mathop{\mathrm{cd}}(W)=\mathop{\mathrm{cd}}(W,\mathcal{D})$. We believe that the answer to the first of these questions is negative, and we make the following conjecture.

\begin{conjecture}
    There exists no $f\in \mathcal F$ such that $\overline{\mathop{\mathrm{cd}}}(W^1,\mathcal{D})=\overline{\mathop{\mathrm{cd}}}(W^1,f)$. In particular, the infimum in \[\lim_{n\rightarrow\infty}\frac{\overline{\mathop{\mathrm{cr}}}(K_n)}{\binom{n}{4}}=\inf_{R}c(\lambda_R)\] cannot be substituted by a minimum.
\end{conjecture}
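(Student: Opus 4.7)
The plan is to proceed by contradiction. Suppose $f \in \mathcal{F}$ satisfies $\overline{\mathop{\mathrm{cd}}}(W^1, f) = \overline{\mathop{\mathrm{cd}}}(W^1)$. By Theorem~\ref{teo:generic}, we may take $f$ to be nice, whence the pushforward $\lambda_f$ is uniform on some bounded region $R \subset \mathbb{R}^2$ of Lebesgue measure one. The identification at the end of Section~\ref{sec:densities} then forces $c(\lambda_R) = \inf_{R'} c(\lambda_{R'})$ --- that is, Sylvester's asymptotic infimum is attained at a specific bounded region $R$. The task is to construct another region $R'$ with $c(\lambda_{R'}) < c(\lambda_R)$, which would contradict the optimality of $R$.

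The most natural avenue is a local-perturbation (first-order optimality) argument. If $R$ is an optimizer, then under any measure-preserving deformation of $R$ supported in a small ball, the variational derivative of $c(\lambda_R)$ must be nonnegative. Computing this derivative, one finds that it is an integral over configurations of four points in which three are collinear --- the ``boundary'' in $(\mathbb{R}^2)^4$ of the crossing/non-crossing indicator $I$. The plan would be to use this rigidity together with global geometric invariants (such as the affine invariance of $c$ and the non-vanishing second moment of the orientation of triples in any positive-area set) to show that no bounded region can satisfy the vanishing-derivative condition in every admissible direction simultaneously. As a complementary route, one could instead take near-optimal rectilinear drawings of $K_n$ for large $n$ (whose existence is guaranteed by Scheinerman--Wilf applied to the assumed $R$), replace each vertex by a small disk to obtain a bounded region $R_n$, and try to show that the hierarchical structure forced on $R_n$ by its origin in a $K_n$-drawing cannot be realized by a single fixed uniform distribution on a bounded region.

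The main obstacle is quantitative. Both the local and the global approach are entangled with the long-standing problem of determining $\lim_{n\to\infty}\overline{\mathop{\mathrm{cr}}}(K_n)/\binom{n}{4}$: any argument proving a definite improvement over any candidate $R$ would in particular refine the current window $[0.379972,\,0.380473]$ on this limit, and sampling $n$ points from a putative optimum $R$ already yields drawings of $K_n$ matching the conjectured asymptotic density to leading order. An alternative, more abstract, attack is via Lemma~\ref{teo:weakclosure}: every indicator $I_f$ satisfies the combinatorial identity that, for four points in general position, exactly one of the three pairings is a crossing, while weak-$*$ limits of such indicators need not. The plan here would be to show that the minimizer $\overline{I}_{W^1}\in\overline{\mathcal{I}}$ is necessarily a non-trivial weak-$*$ limit (not an indicator itself), perhaps by combining the above first-order optimality conditions with a strict convexity argument at the minimum; then no $f\in\mathcal{F}$ can attain the infimum. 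Whichever route one takes, genuinely new ideas --- beyond those developed in the present paper --- will almost certainly be required.
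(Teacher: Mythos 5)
You have correctly identified that this statement is stated in the paper as an open conjecture; the paper offers no proof, and your writeup does not (and cannot, by your own admission) constitute one. What you provide is a reasonable survey of plausible lines of attack: a first-variation/local-perturbation argument on the region $R$, a counting argument via near-optimal rectilinear drawings of $K_n$, and an abstract weak-$*$ route through Lemma~\ref{teo:weakclosure} trying to show that any minimizer $\overline{I}_{W^1}\in\overline{\mathcal{I}}$ fails the pointwise structural identities that every genuine $I_f$ satisfies. All of these are sensible, and your closing assessment --- that any of them would require genuinely new input beyond the paper --- matches the paper's own position (it states the conjecture and moves on). So there is no gap to flag in the sense of a flawed step; the honest status is simply that you have produced a research program, not an argument.

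One caution worth recording: the local-perturbation route as you phrase it implicitly assumes that an extremizing $R$ would have enough regularity for a first variation to exist and be expressible as an integral over near-degenerate (three-collinear) configurations, and that ``rigidity $+$ affine invariance'' can then be leveraged into a contradiction. Neither step is obvious. Affine invariance of $c(\lambda_R)$ in fact \emph{reduces} the space of admissible deformations one needs to exclude, since affine deformations move within a level set; it does not by itself produce a contradiction. The weak-$*$ route has a cleaner target --- show that some structural constraint (e.g.\ that for a.e.\ four points, exactly one of the three pairings crosses) is violated by $\overline{I}_{W^1}$ --- but establishing strict loss of that constraint at the minimizer is exactly where a new idea is needed. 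You are right that this is entangled with the Scheinerman--Wilf/Sylvester constant; a complete proof would very likely have to engage with (or at least be consistent with) the known numerical bounds.
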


\item The definition of a simple graphon drawing can also be modified to obtain continuous analogs of other kinds of crossing numbers. For example, if we get rid of condition $4$, we obtain a sort of \textit{pair crossing density}\footnote{The \textit{pair crossing density} of a graph $G$ is the least number of pairs of crossing edges over all drawings of $G$.} for graphons, while adding a fifth condition which requires that each $\mathcal{D}(x,y)$ is $x$-monotone results in a \textit{monotone crossing density}\footnote{A curve is $x$\textit{-monotone} if every vertical line intersects it in at most one point. A graph drawing is said to be \textit{monotone} if every edge is represented by an $x$-monotone curve. The \textit{monotone crossing number} of a graph is the least number of crossings that can be attained by a monotone drawing.}. Even though several of our arguments carry over to these variants, we do not know whether a result along the lines of Theorem~\ref{teo:closegraphs} holds for other kinds of crossing numbers.

\item For any $p\in[0,1]$, the set of graphons with $||W||_1=p$ is clearly closed with respect to $d_\square$. Since $(\widetilde{\mathcal{W}_0},\delta_\square)$ is compact and the crossing densities are invariant, the following problem is well posed.

\begin{problem}
    For every $p\in[0,1]$, find two graphon $W_1$ and $W_2$ with $||W_1||_1=||W_2||_1=p$ such that \[\overline{\mathop{\mathrm{cd}}}(W_1)=\min_{W\in\mathcal{W}_0,||W||_1=p}\overline{\mathop{\mathrm{cd}}}(W)\] and \[\mathop{\mathrm{cd}}(W_2)=\min_{W\in\mathcal{W}_0,||W||_1=p}\mathop{\mathrm{cd}}(W).\]
\end{problem}

Corollary~\ref{teo:Wpmax} provides an answer to the variant of this problem where the minimums are substituted by maximums. We expect both of the above questions to be difficult to answer, but we believe that doing so might be an important step towards determining the so-called \textit{midrange crossing constant}\footnote{Let $\kappa(n,e)$ denote the minimum crossing number among all $n$-vertex graphs $G$ with $e$ edges. It was shown by Pach, Spencer and Tóth~\cite{pach1999new} that there exists a constant $\kappa$ (the \textit{midrange crossing constant}) such that, under the assumption that $n\ll e\ll n^2$, $\lim_{n\rightarrow \infty}\kappa(n^2/e^3)=\kappa$. An analogous result holds for rectilinear crossing numbers.}.  

Let $\mathbb S^2$ denote a $2$-dimensional sphere in $\mathbb R^3$ with total surface area $1$. We will now describe a graphon defined on $\mathbb S^2$ instead of $[0,1]$, but we remark that it can be transformed to an equivalent graphon in $\mathcal W_0$ by means of a measure preserving transformation, and so the theory we have developed still applies to it. Fix $p\in[0,1]$ and let $\tau_p$ denote the unique real number such that, for any $x\in\mathbb S^2$, the intersection of $\mathbb S^2$ with the ball of radius $\tau_p$ centered at $x$ has surface area $p$. Define $W_{\mathbb S^2,p}:\mathbb S^2\times\mathbb S^2\rightarrow \mathbb [0,1]$ by setting $W_{\mathbb S^2,p}(x,y)=1$ if $x$ and $y$ are at distance at most $\tau_p$ and $W_{\mathbb S^2,p}(x,y)=0$ otherwise. Inspired by the main result of~\cite{czabarka2020some}, we make the following conjecture.

\begin{conjecture}
    For every $p\in[0,1]$, we have that \[\operatorname{cd}(W_{\mathbb S^2,p})=\min_{W\in\mathcal{W}_0,||W||_1=p}\mathop{\mathrm{cd}}(W).\]
\end{conjecture}

\item Finally, motivated by the Kuratowski-Wagner Theorem~\cite{kuratowski1930probleme,wagner1937eigenschaft} (which states that a graph is planar if and only if it doesn't contain a subdivision of $K_5$ or $K_{3,3}$ as a subgraph), we ask the following question.

\begin{problem}
    Let $W$ be a graphon. What can be said about $\operatorname{cd}(W)$ and $\overline{\operatorname{cd}}(W)$ just from knowing the homomorphism densities $t(F,W)$ where $F$ ranges over all subdivisions of $K_5$ and $K_{3,3}$?
\end{problem}

\end{itemize}

\bibliographystyle{plain}
\bibliography{refs}

\end{document}